\theoremstyle{plain}
\newcommand{\norm}[1]{\left\|#1\right\|}
\theoremstyle{definition}\newtheorem{definition}{Definition}[section]
\theoremstyle{plain}\newtheorem{theorem}{Theorem}[section]
\theoremstyle{plain}\newtheorem{lemma}[theorem]{Lemma}
\theoremstyle{plain}
\theoremstyle{plain}\newtheorem{proposition}[theorem]{Proposition}
\theoremstyle{remark}\newtheorem{remark}{Remark}[section]
\newcommand{\NN}{\mathbb{N}}
\newcommand{\RR}{\mathbb{R}}
\DeclareMathOperator{\intd}{d}
\DeclareMathOperator{\dtd}{\frac{d}{d{\emph t}}}
\begin{document}
\title{Global well-posedness for axisymmetric Boussinesq system with horizontal viscosity}
\author{Changxing Miao$^1$ and  Xiaoxin Zheng$^2$\\
\\
        \small{$^{1}$ Institute of Applied Physics and Computational Mathematics,}\\
        \small{P.O. Box 8009, Beijing 100088, P.R. China.}\\
        \small{(miao\_{}changxing@iapcm.ac.cn)}\\
        \small{$^2$  The Graduate School of China Academy of Engineering Physics,}\\
        {\small P.O. Box 2101, Beijing 100088, P.R. China.}\\
        {\small{(xiaoxinyeah@163.com)}}}

\date{}
\maketitle

\begin{abstract}
In this paper, we are concerned with the tridimensional anisotropic Boussinesq equations which can be described by
\begin{equation*}
  \left\{\begin{array}{ll}
       \partial_{t}u+u\cdot\nabla u-\kappa\Delta_{h} u+\nabla \Pi=\rho e_{3},\quad(t,x)\in\mathbb{R}^{+}\times\mathbb{R}^{3},\\
       \partial_{t}\rho+u\cdot\nabla \rho=0,\\
       \text{div}\,u=0.
    \end{array}\right.
\end{equation*}
Under the assumption that the support of the axisymmetric initial data $\rho_{0}(r,z)$
 does not intersect the axis $(Oz)$, we prove the global well-posedness for this system  with axisymmetric initial data. We first show the growth of the quantity $\frac\rho r$ for large time by taking advantage of characteristic of transport equation. This growing property together with the horizontal smoothing effect enables us to establish  $H^1$-estimate of the velocity via the $L^2$-energy estimate of velocity and the Maximum principle of density. Based on this, we further   establish the estimate for the quantity
$\|\omega(t)\|_{\sqrt{\mathbb{L}}}:=\sup_{2\leq p<\infty}\frac{\norm{\omega(t)}_{L^p(\mathbb{R}^3)}}{\sqrt{p}}<\infty$
which implies $\|\nabla u(t)\|_{\mathbb{L}^{\frac{3}{2}}}:=\sup_{2\leq p<\infty}\frac{\norm{\nabla u(t)}_{L^p(\mathbb{R}^3)}}{p\sqrt{p}}<\infty$. However, this regularity for the flow admits forbidden singularity since $ \mathbb{L}$ (see \eqref{eq-kl} for the definition) seems be the minimum space for the gradient vector field $u(x,t)$ ensuring uniqueness of flow. To bridge this gap, we exploit the space-time estimate about $ \sup_{2\leq p<\infty}\int_0^t\frac{\|\nabla u(\tau)\|_{L^p(\mathbb{R}^3)}}{\sqrt{p}}\mathrm{d}\tau<\infty$ by making good use of the horizontal smoothing effect and micro-local techniques. The global well-posedness for the large initial data is achieved by establishing a new type space-time logarithmic inequality.

\end{abstract}
\noindent {\bf Mathematics Subject Classification (2000):}\quad 35B33, 35Q35 , 76D03, 76D05\\
\noindent {\bf Keywords:}\quad   Boussinesq system, horizontal viscosity, anisotropic inequality, global
well-posedness.
\section{Introduction}
\setcounter{section}{1}\setcounter{equation}{0}
The Boussinesq system is used as a toy model in the dynamics of the ocean or of the atmosphere, and play an important role in the study of
Raleigh-Bernard convection.  One may refer to \cite{J-P} for more details about its physical background. It takes the form:
\begin{equation}\label{full}
   \left\{\begin{array}{ll}
       \partial_{t}u+u\cdot\nabla u-\kappa\Delta u+\nabla \Pi=\rho e_{n},\quad(t,x)\in\mathbb{R}^{+}\times\mathbb{R}^{n},\quad n=2,3,\\
       \partial_{t}\rho+u\cdot\nabla \rho-\nu\Delta\rho=0,\\
       \text{div}\,u=0,\\
       (u,\rho)|_{t=0}=(u_{0},\rho_{0}),
    \end{array}\right.
\end{equation}
where, the velocity  $u=(u_1,\cdots,u_n)$ is a vector field  and the scalar unknown $\rho$ denotes quantity such as the concentration of a chemical substance or the temperature variation in
a gravity fields, in which case $\rho e_n$ represents the buoyancy force. And the nonnegative parameters $\kappa$ an $\nu$ stands for the viscosity and the molecular diffusion respectively. In addition, the scalar function $\Pi$ is pressure which can be recovered from the unknowns $u$ and $\rho$ via Riesz operator.

This system have been intensively studied due to their physical background and mathematical significance. In dimension two, the standard energy method enables us to establish  the global existence of regular solutions for the case where $\nu$ and $\kappa$ are nonnegative constants. But, for the inviscid Boussinesq \mbox{system \eqref{full}}, the global well-posedness for some nonconstant $\rho_0$ is still an challenge open problem.  When $\nu$ is a positive constant and  $\kappa=0$; or $\nu=0$ and $\kappa$ is a positive constant,  the global well-posedness was  independently  obtained in \cite{ha,hou-li} for the two-dimensional Boussinesq system, see also \cite{hk} for the global well-posedness in the critical spaces. For the fractional case, Hmidi, Keraani and Rousset \cite{hkr} showed the  global well-posedness  for the critical case by using a hidden cancellation given by the coupling. Moreover, for the case where fractional viscosity and thermal diffusion the fractional powers satisfy mild condition, the global results on the two-dimensional Boussinesq equations  were obtained in \cite{JMWZ,CMX}.

For the tri-dimensional Boussinesq equations, R. Danchin and M. Paicu \cite{DP2008} showed the global existence of weak solution for $L^2$-data and the global well-posedness for small initial data. They  \cite{DP2008-2} also obtained a existence and uniqueness
result for small initial data belonging to some critical Lorentz spaces. But there is little study about the global well-posednss result for large initial data, even for the tri-dimensional Navier-Stokes equations. Inspired by the study of Navier-Stokes equations for large data in special case, more recent works target to consider the tri-dimensional axisymmetric Boussinesq system without swirl case. In \cite{A-H-K0}, a global existence and uniqueness result for the following system
\begin{equation}
      \label{full-bs}
\left\{ \begin{array}{ll}
\partial_{t}u+u\cdot\nabla u-\kappa\Delta u+\nabla \Pi=\rho e_3,\quad (t,x)\in \RR^+\times\RR^3,\\
\partial_{t}\rho+u\cdot\nabla\rho=0,\\
\textnormal{div}\,  u=0,\\
  (u,\rho)|_{t=0}=(u_{0},\rho_{0}),
\end{array} \right.
\end{equation}
 was obtained by establishing the following quadratic growth estimate
    \begin{equation}\label{growth-2}
   \Big \|\frac{\rho}{r}(t)\Big\|_{L^2}\leq  \Big \|\frac{\rho_0}{r}\Big\|_{L^2}+C_0\Big\|\frac{u_r}{r}\Big\|_{L^1_tL^\infty}\big(1+\|u\|_{L^1_tL^\infty}\big),
\end{equation}
under assumption that the support of the initial density does not intersect the  axis $r=0$.
From that time on, much effects has been made to show the global well-posedness for the tri-dimensional axisymmetric Boussinesq system without swirl case, when
the dissipation only occurs one equation or is present only in one direction (anisotropic dissipation). In a series of paper \cite{hrou1,hrou},  T. Hmidi and F. Rousset \cite{hrou1} proved the global well-posedness for the Navier-Stokes-Boussinesq system by virtue of the structure of the coupling between two equations of \eqref{full} with $\nu=0$. In \cite{hrou}, they also showed the global well-posedness for the tridimensional Euler-Boussinesq system with axisymmetric initial data without swirl. And their proofs strongly relies on the fact the dissipation occurs in three directions.

As pointed out by J.-Y. Chemin et al in \cite{Cdgg},  the anisotropic dissipation assumption is natural and physical. In fact, in certain regimes and after suitable rescaling, the vertical dissipation (or the horizontal dissipation) is negligible  as compared to the horizontal dissipation (or the vertical dissipation). In the past years, there are several works devoted to study of the two-dimensional Boussinesq system with anisotropic dissipation. When the horizontal viscosity occurs in only one equation,  the global well-posedness result for the two-dimensional Boussinesq system  was obtained in \cite{dp2}. Moreover,  A. Adhikari, C. Cao and J. Wu also established some global results under various assumption for the two-dimensional Boussinesq system with dissipation occurs in vertical direction in a series of recent papers \cite{acw,acw1}. More recently, \mbox{C. Cao and J. Wu } \cite{cw} successfully
proved the global well-posedness for the two-dimensional  Boussinesq system \eqref{full} in terms of a Log-type inequality
\begin{equation*}
\|u_2\|_{L^\infty}\leq C\|u_2\|_{\sqrt{L\log L}}\Big(\log\big(e+\|u_2\|_{H^2}\big)\log\log\big(e+\|u_2\|_{H^2}\big)\Big)^{\frac{1}{2}}.
\end{equation*}together with  a  control of $\norm{ u_2}_{\sqrt{L\log L}}$, where the space $
\sqrt{L\log L}$ stands for the space of functions $f$ in $\cap_{2\leq p<\infty}L^p$ such that
\begin{equation*}
\|f\|_{\sqrt{L\log L}}:=\sup_{2\leq p<\infty}\big({p\log p}\big)^{-\frac{1}{2}}\|f\|_{L^{p}}< \infty.
 \end{equation*}
 In addition, under the assumption that the initial data is  axisymmetric without swirl, we stated the global well-posedness in \cite {MZ2012} for the tridimensional Boussinesq equations with horizontal viscosity and diffusion by using a losing estimate with vector fields lying in Log-Lipschitz and establishing the algebraic identity
\begin{equation}\label{algebraic-idenity}
\frac{u_{r}}{r}=\partial_{z}\Delta^{-1}\left(\frac{\omega_\theta}{r}\right)-2\frac{\partial_r}{r}\Delta^{-1}\partial_z\Delta^{-1}\left(\frac{\omega_\theta}{r}\right)\cdot
\end{equation}

In the presented paper, we take effect to investigate the global well-posedness  for tridimensional Boussinesq system with horizontal viscosity
in the whole space  with axisymmetric initial data. This system is described as follows:
\begin{equation}\label{bs}
   \left\{\begin{array}{ll}
      \partial_{t}u+u\cdot\nabla u-\kappa\Delta_{h} u+\nabla \Pi=\rho e_{3},\quad(t,x)\in\mathbb{R}^{+}\times\mathbb{R}^{3},\\
       \partial_{t}\rho+u\cdot\nabla \rho=0,\\
       \text{div}u=0,\\
       (u,\rho)|_{t=0}=(u_{0},\rho_{0}),
       \tag{HBS}
    \end{array}\right.
\end{equation}
where $\Delta_{h}:=\partial^{2}_{1}+\partial^{2}_{2}$. In the following parts, we assume that $\kappa=1$ for the sake of convenience.

 First of all, let us recall some algebraic and geometric properties of the axisymmetric vector fields (cf. \cite{hrou1,MZ2012}) and
  discuss the special structure of the vorticity of   \eqref{bs}.
Let  $u$ is an axisymmetric vector field without swirl, that is, $u(t,x)=u_{r}(r,z)e_{r}+u_{z}(r,z)e_{z}$.
Then a simple calculation  yields that the vorticity $\omega:=\text{curl}\, u$ of the vector
field has the form
\begin{equation*}
\omega=(\partial_{z}u_{r}-\partial_{r}u_{z})e_\theta:=\omega_\theta e_\theta,
\end{equation*}
and
\begin{equation*}
u\cdot\nabla=u_{r}\partial_{r}+u_{z}\partial_{z},\quad \text{div}\,u=\partial_{r}u_{r}+\frac{u_{r}}{r}+\partial_{z}u_{z}\quad \text{and}\quad\omega\cdot \nabla u=\frac{u_{r}}{r}\omega
\end{equation*}
in the cylindrical coordinates. As a consequence, the vorticity $\omega$ solves
\begin{equation} \label{tourbillon-0}
\partial_t \omega +u\cdot\nabla\omega-\Delta_{h}\omega
 =-\partial_{r}\rho e_{\theta}+\frac{u_r}{r}\omega.
\end{equation}
This together with the fact that $\Delta_{h}=\partial_{rr}+\frac{1}{r}\partial_{r}$ in the cylindrical coordinates enables us to conclude  the quantity $\omega_
\theta$ satisfies
\begin{equation}\label{tourbillon}
\partial_t \omega_\theta +u\cdot\nabla\omega_\theta-\Delta_{h}\omega_\theta
+\frac{\omega_\theta}{r^2} =-\partial_{r}\rho+\frac{u_r}{r}\omega_\theta.
\end{equation}

The target of this paper is to study the global existence and the uniqueness for the system \eqref{bs} with
  axisymmetric  initial data, which means that the velocity $u_0$ is assumed to be an axisymmetric vector
  field without swirl  and the density $\rho_0$ depends only on $(r,z)$.  Now we shall briefly discuss the difficulties  and outline the main ingredient in our proof. First, the  quadratic growth estimate \eqref{growth-2} which
plays the key role in the proof of \cite{A-H-K0} does not work for the system \eqref{bs} due to the absence of vertical viscosity. Rough speaking, the difficulty
arises in dealing with vorticity equation.
Taking the $L^{2}$-inner product of \eqref{tourbillon}  with $\omega_\theta$ and integrating by parts, we readily get
  \begin{equation*}
  \frac12\dtd \norm{\omega_{\theta}(t)}^{2}_{L^{2}}+\norm{\nabla_h\omega_{\theta}(t)}^{2}_{L^{2}}+\Big\|\frac{\omega_{\theta}}{r}(t)\Big\|^{2}_{L^{2}}
  =\int_{\mathbb{R}^3}\frac{u_{r}}{r}\omega_\theta\omega_\theta \mathrm{d}x+\int_{\mathbb{R}^3} \rho\frac{\omega_\theta}{r} \mathrm{d}x+\int_{\mathbb{R}^3} \rho\partial_r\omega_\theta \mathrm{d}x.
  \end{equation*}
Taking advantage of the anisotropic inequality of Lemma \ref{lema.2}, the first integral term in the right side of the above equality can be bounded by
  \begin{equation*}
  C\|u\|_{L^2}\|\omega_{\theta}\|^{2}_{L^2}+\norm{\omega_\theta\over r}^{2}_{L^2}\norm{\nabla_h\Big(\frac{\omega_\theta}{ r}\Big)}^{2}_{L^2}
+\frac{1}{4}\|\nabla_h\omega_\theta\|^{2}_{L^2}.
  \end{equation*}
  On the other hand, the unknown $\frac{\omega_\theta}{r}$ satisfies the following equation
\begin{equation*}
\big(\partial_t+u\cdot\nabla\big)\frac{\omega_\theta}{r}-\big(\Delta_{h}+{{2 \over r}}\partial_r\big) \frac{\omega_\theta}{r} =-\frac{\partial_r\rho}{r}.
\end{equation*}
In a similar fashion as in \cite{A-H-K0}, one can conclude by the virtue of the
   estimate \eqref{growth-2} that
\begin{equation*}
\big\|{\omega_\theta\over r}(t)\big\|_{L^2}^2
+\frac12\int_0^t\big\|\nabla_{h}\Big({\omega_\theta\over r}\Big)(\tau)\big\|_{L^2}^2\intd\tau
\leq C_0(1+t^{5})\Big(1+\int_0^{t}\big\|\frac{\omega_\theta}{r}(\tau)\big\|_{L^2}^2\mathrm{d}\tau\Big)+\frac14\int_0^{t}\|\nabla_{h} \omega(\tau)\|^2_{L^2}\mathrm{d}\tau.
\end{equation*}
From this, it seems impossible to use the quantities in the left side of the above inequality to control the integral term $\int_0^t\|{\omega_\theta\over r}(\tau)\|^{2}_{L^2}\|\nabla_h(\frac{\omega_\theta}{ r})(\tau)\|^{2}_{L^2}\intd\tau$.
 This require us to refine this quadratic growth estimate to make up for the shortage of vertical diffusion.
To do this,  we establish the following estimate
\begin{equation*}
    \Big\|\frac{\rho}{r}(t)\Big\|_{L^2}\leq C\Big\|\frac{\rho_0}{r}\Big\|_{L^2}+ C\Big\|\frac{u_r}{r}\Big\|_{L^1_tL^\infty}\Big(1+\int_0^t\norm{\nabla_h u(\tau)}^{\frac{1}{2}}_{L^2}\norm{\nabla_h
    \omega(\tau)}^{\frac{1}{2}}_{L^{2}}\mathrm{d}\tau\Big).
\end{equation*}
 by deeply using the axisymmetric structure and the incompressible condition.   As an consequence, we have
   \begin{align*}
\big\|{\omega_\theta\over r}(t)\big\|_{L^2}^2
+\frac12\int_0^t\big\|\nabla_{h}\Big({\omega_\theta\over r}\Big)(\tau)\big\|_{L^2}^2\intd\tau
\leq&
C_0(1+t^{5})\Big(1+\int_0^{t}\big\|\frac{\omega_\theta}{r}(\tau)\big\|_{L^2}^2\mathrm{d}\tau\Big)\nonumber\\
&+\frac14\Big(\int_0^{t}\|\nabla_{h} \omega(\tau)\|^2_{L^2}\mathrm{d}\tau\Big)^{\frac12}.
\end{align*}
Consequently, we obtain the estimate of
\begin{equation*}
\|\omega_\theta(t)\|^{2}_{L^2}+\int_0^t\|\nabla_h\omega_\theta(\tau)\|^{2}_{L^2}\intd\tau+\big\|{\omega_\theta\over r}(t)\big\|_{L^2}^4
+\Big(\int_0^t\big\|\nabla_{h}\Big({\omega_\theta\over r}\Big)(\tau)\big\|_{L^2}^2\intd\tau\Big)^2.
\end{equation*}
   This entails us to obtain the estimate of $\|\omega(t)\|_{\sqrt{\mathbb{L}}}:=\sup_{2\leq p<\infty}\frac{\norm{\omega(t)}_{L^p(\mathbb{R}^3)}}{\sqrt{p}}$, which together with the well-known fact $$\|\nabla u\|_{L^p}\leq C\frac{p^2}{p-1}\|\omega\|_{L^p}\quad\text{with}\quad p\in]1,\infty[$$ gives $\|\nabla u(t)\|_{\mathbb{L}^{\frac{3}{2}}}:=\sup_{2\leq p<\infty}\frac{\norm{\nabla u(t)}_{L^p(\mathbb{R}^3)}}{p\sqrt{p}}<\infty$.
    Unfortunately, the function $p\sqrt{p}$ does not belong to the dual Osgood modulus of continuity, which prevents us  trying to obtain higher-order estimates of $(\rho,u)$, where an dual Osgood modulus of continuity $\omega(p)$ is the non-decreasing function satisfying $\int_a^\infty\frac{1}{\omega(\tau)}\intd\tau=\infty$ for some $a>0$. To bridge the gap between the dual Osgood modulus of continuity and $\mathbb{L}^{\frac{3}{2}}$, inspired by the Boot-Strap argument, we exploit the space-time estimate about $\sup_{2\leq p<\infty}\int_0^t\frac{\|\nabla u(\tau)\|_{L^p(\mathbb{R}^3)}}{\sqrt{p}}\mathrm{d}\tau$ based on the estimate of $\|\nabla u(t)\|_{\mathbb{L}^{\frac32}}$ by making good use of the horizontal smoothing effect and micro-local techniques. Combining this with a new type space-time logarithmic inequality established in Section \ref{Sec-2} entails us to obtain the desired result.

    Before stating our
  main result we denote by $\Pi_z$ the orthogonal projector over the axis $(Oz).$  We define the distance from a point $x$ to a subset $A\subset\RR^3$ by
$$
\mathrm{d}(x,A):=\inf_{y\in A}\|x-y\|,
$$
where $\|\cdot\|$ is the usual Euclidian norm.  The distance between two subsets $A$ and
$B$ of $\RR^3$ is defined by
$$
\mathrm{d}(A,B):=\inf_{x\in A,\, y\in B}\|x-y\|.
$$
And $
\text{diam }A=\sup_{x,\,y\in A}\|x-y\|
$ denotes the diameter of a bounded subset $A\subset \RR^3$.
Moreover, let us introduce  ${\mathbb{L}}^{a}(\mathbb{R}^3)(a\in[0,1])$ of those function $f$ which belong to every space $L^{p}(\mathbb{R}^3)$ with $2\leq p<\infty$ and satisfy
\begin{equation}\label{eq-kl}
\norm{f}_{\mathbb{L}^{a}(\mathbb{R}^3)}:=\sup_{p\geq 2}\frac{\norm{f}_{L^{p}(\mathbb{R}^3)}}{{p}^{a}}<\infty.
\end{equation}
We denote ${\mathbb{L}^{\frac12}(\mathbb{R}^3)}$ by $\sqrt{{\mathbb{L}}}(\mathbb{R}^3)$ for the sake of simplicity.

Our result reads as follows.
  \begin{theorem}\label{global}
   Assume that $u_0\in H^{1}(\mathbb{R}^3)$ be an axisymmetric vector field with zero divergence, and its vorticity satisfies   $\frac{{\omega_0}}{r}\in L^2(\mathbb{R}^3)$ and $\partial_z\omega_0\in L^2$.
 Let $\rho_0\in H^1(\mathbb{R}^3)\cap L^\infty(\RR^3)$ depending only on $(r,z)$ and such that $\hbox{\rm Supp }\rho_0$
 does not intersect the axis $(Oz)$ and $\Pi_z(\hbox{\rm Supp }\rho_0)$ is a compact set. Then the Boussinesq system \eqref{bs} has a unique global solution $(\rho,u)$ such that
\begin{align*}
u\in{C}\big(\RR^{+};H^{1}(\mathbb{R}^3)\big),\quad \nabla_h u\in L^2_{\rm loc}\big(\RR^+;H^1(\RR^3)\big),\quad\partial_z\omega,\,\frac{\omega}{r}\in C\big(\RR^+;L^2(\mathbb{R}^3)\big), \\\nabla_h\partial_z\omega,\,\nabla_h\Big(\frac{\omega}{r}\Big)\in L^2_{\textnormal{loc}}\big(\RR^+;L^2(\mathbb{R}^3)\big),
\quad\rho\in L^\infty\big(\RR^+;L^\infty(\RR^3)\big), \quad\rho\in{C}\big(\RR^{+};H^{1}(\mathbb{R}^3)\big),
\end{align*}
$$\quad \nabla u\in L^1_{\rm loc}\big(\RR^+;L^\infty(\RR^3)\big).
$$
\end{theorem}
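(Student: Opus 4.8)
The plan is to establish the full list of a priori bounds in the statement on an arbitrary smooth axisymmetric-without-swirl solution, and then to obtain existence by a standard regularization/compactness scheme and uniqueness by a stability estimate; the genuinely new content is entirely in the a priori part. I would begin with the two robust bounds. Since $\rho$ is transported by the divergence-free field $u$, every Lebesgue norm is preserved, $\norm{\rho(t)}_{L^p}=\norm{\rho_0}_{L^p}$ for $p\in[2,\infty]$, which is the maximum principle for the density. Testing the momentum equation against $u$ and using $\mathrm{div}\,u=0$ gives $\tfrac12\dtd\norm{u}_{L^2}^2+\norm{\nabla_h u}_{L^2}^2=\int_{\RR^3}\rho\,u_3\,\intd x\le\norm{\rho_0}_{L^2}\norm{u}_{L^2}$, so $\norm{u(t)}_{L^2}$ grows at most linearly and $\int_0^t\norm{\nabla_h u(\tau)}_{L^2}^2\,\intd\tau$ is controlled by a polynomial in $t$. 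These are the only globally conserved quantities, and the difficulty throughout is that the missing vertical viscosity forbids any direct $H^1$ bound.

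The next block is the axisymmetric vorticity analysis. Because $\mathrm{Supp}\,\rho_0$ avoids the axis, $\rho/r$ is well defined and satisfies the transport equation $(\partial_t+u\cdot\nabla)\tfrac{\rho}{r}=-\tfrac{u_r}{r}\tfrac{\rho}{r}$; integrating this along the volume-preserving flow and exploiting $\norm{u_r/r}_{L^\infty}$ together with the algebraic identity for $u_r/r$ and anisotropic interpolation, I would derive the refined growth estimate for $\norm{\tfrac{\rho}{r}(t)}_{L^2}$ in which the potentially exponential factor is replaced by the gentler space-time quantity $\int_0^t\norm{\nabla_h u}_{L^2}^{1/2}\norm{\nabla_h\omega}_{L^2}^{1/2}\,\intd\tau$. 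Feeding this into the $L^2$ estimates for $\omega_\theta$ and for $\omega_\theta/r$, using Lemma \ref{lema.2} to absorb the stretching term $\tfrac{u_r}{r}\omega_\theta^2$ into $\norm{\nabla_h\omega_\theta}_{L^2}^2$ and the $\nabla_h(\omega_\theta/r)$ dissipation, closes a coupled Gronwall system and yields polynomial-in-$t$ control of $\norm{\omega_\theta(t)}_{L^2}^2+\int_0^t\norm{\nabla_h\omega_\theta}_{L^2}^2\,\intd\tau$ together with $\norm{\omega_\theta/r}_{L^2}^4$. Running the same argument with $|\omega_\theta|^{p-2}\omega_\theta$ as test function and tracking the $p$-dependence of every constant produces the $\sqrt p$ growth rate, i.e. $\norm{\omega(t)}_{\sqrt{\mathbb{L}}}<\infty$, and the Biot--Savart bound $\norm{\nabla u}_{L^p}\le C\tfrac{p^2}{p-1}\norm{\omega}_{L^p}$ upgrades this to $\norm{\nabla u(t)}_{\mathbb{L}^{3/2}}<\infty$.

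The heart of the proof, and the step I expect to be the main obstacle, is that $\mathbb{L}^{3/2}$ lies just outside the admissible class for a unique Lipschitz-type flow: the instantaneous rate $p^{3/2}$ fails the dual Osgood condition, so one cannot pass directly from $\norm{\nabla u(t)}_{\mathbb{L}^{3/2}}$ to $\nabla u\in L^1_{\mathrm{loc}}(\RR^+;L^\infty)$. To recover the missing half-power I would exploit the horizontal smoothing effect through a Littlewood--Paley decomposition: writing the Duhamel formula for each frequency block of $\nabla u$ and using the horizontal heat flow to integrate in time, one gains a factor improving the instantaneous rate $p^{3/2}$ to the time-integrated rate $p^{1/2}$, namely $\sup_{2\le p<\infty}\int_0^t\tfrac{\norm{\nabla u(\tau)}_{L^p}}{\sqrt p}\,\intd\tau<\infty$. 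This is exactly the $L^1_t\sqrt{\mathbb{L}}$ information sitting on the correct side of the Osgood threshold.

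Finally, I would combine this space-time $\sqrt{\mathbb{L}}$ bound with the new space-time logarithmic inequality of Section \ref{Sec-2} to run an Osgood argument and conclude $\int_0^t\norm{\nabla u(\tau)}_{L^\infty}\,\intd\tau<\infty$. With such a velocity field in hand, propagation of the remaining regularity ($u\in C(\RR^+;H^1)$, $\partial_z\omega,\,\omega/r\in C(\RR^+;L^2)$ with their horizontal-derivative gains, and $\rho\in C(\RR^+;H^1)$) follows from direct energy estimates, and uniqueness follows from an $L^2$ stability estimate for the difference of two solutions closed by Gronwall using $\nabla u\in L^1_tL^\infty$. Existence is then obtained by regularizing the data, checking that axisymmetry without swirl and the support condition on $\rho_0$ are preserved by the approximation, deriving all of the above bounds uniformly, and passing to the limit by Aubin--Lions compactness.
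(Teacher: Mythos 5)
Your proposal follows the paper's own route essentially step by step: the maximum principle and $L^2$ energy bound (Proposition \ref{energy}), the refined support-based growth estimate for $\norm{\rho/r}_{L^2}$ (Proposition \ref{cor1}), the coupled $L^2$ estimates for $\omega_\theta$ and $\omega_\theta/r$ (Proposition \ref{prop-cruc}), the $L^p$ estimates with $\sqrt p$ tracking, the horizontal-smoothing Duhamel argument converting the instantaneous rate $p^{3/2}$ into the time-integrated rate $\sqrt p$ (Proposition \ref{prop-fine}), the space-time logarithmic inequality, and regularization plus Aubin--Lions compactness for existence. Two points, however, would fail as literally written.

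First, uniqueness. A plain $L^2\times L^2$ stability estimate for $(\delta u,\delta\rho)$ does not close in the regularity class of the theorem: the density difference solves $\partial_t\delta\rho+u\cdot\nabla\delta\rho=-\delta u\cdot\nabla\widetilde\rho$, and since $\widetilde\rho$ is only in $H^1\cap L^\infty$ (no Lipschitz control on the density), the source $\delta u\cdot\nabla\widetilde\rho$ cannot be estimated in $L^2$ by $\norm{\delta u}_{L^2}$ and controlled norms of $\widetilde\rho$. The paper instead measures $\delta\rho$ in $H^{-1}$, so that the source is read as $\mathrm{div}(\delta u\,\widetilde\rho)$ and costs only $\norm{\delta u}_{L^2}\norm{\widetilde\rho}_{L^\infty}$; this in turn forces a non-obvious treatment of the buoyancy term, which is bounded by $C\norm{\delta\rho}_{H^{-1}}\norm{\nabla_h\delta u}_{L^2}$ by writing $\int_{\RR^3}\delta\rho\,\delta u_z\intd x=\int_{\RR^3}\Delta^{-1}\delta\rho\,\Delta_h\delta u_z\intd x-\sum_{i=1}^2\int_{\RR^3}\Delta^{-1}\delta\rho\,\partial_z\partial_i\delta u_i\intd x$ via $\mathrm{div}\,\delta u=0$ --- crucially only \emph{horizontal} derivatives of $\delta u$ may appear on the right, because horizontal viscosity is the only dissipation available to absorb them. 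Second, a structural point: the logarithmic inequality cannot by itself deliver $\int_0^t\norm{\nabla u(\tau)}_{L^\infty}\intd\tau<\infty$ before the higher-order estimates, because a higher norm ($\norm{\nabla u}_{L^1_tB^{\epsilon}_{\infty,\infty}}$ in Proposition \ref{prop-pri}, or $\norm{\omega}_{L^1_tH^s}$ in Section \ref{Sec-4}) sits inside the logarithm; consequently the Lipschitz bound and the propagation of $\partial_z\omega$ and $\nabla\rho$ must be closed \emph{simultaneously} in one Gronwall loop, the logarithm being precisely what converts the exponential dependence on those norms into a linear factor. Your sequential phrasing (first conclude $\nabla u\in L^1_{\rm loc}L^\infty$, then propagate the remaining regularity) should be replaced by this coupled closure; with these two repairs your outline coincides with the paper's proof.
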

\begin{remark}
Our proof strongly relies on the growth  estimate of $\frac\rho r$  and the horizontal smoothing effect.
\end{remark}

The rest of the paper is organized as follows. In Section \ref{Sec-2}, we  review  Littlwood-Paley theory and  establish a new type space-time logarithmic inequality which is an important ingredient in the proof of \mbox{Theorem \ref{global}}.  Next, we study analytic properties
of the flow associated to an axisymmetric vector field. In Section \ref{Sec-3}, we  obtain a priori estimates for sufficiently smooth solutions of the system \eqref{bs} by using the procedure that we have just described in introduction. \mbox{Section \ref{Sec-4}} is devoted to the proof of \mbox{Theorem \ref{global}}. Finally, an appendix is devoted to two useful lemmas.
  \section{Preliminaries}\label{Sec-2}
  \setcounter{section}{2}\setcounter{equation}{0}
 In the first subsection, we first provide the definition of some function spaces and properties based on the so-called Littlewood-Paley decomposition  that will be used constantly in the following sections. Next, we give a space-time logarithmic inequality in view of the low-high decomposition techniques.  In the last subsection, we main  establish the grow estimate of the quantity $\frac\rho r$ by taking advantage of some geometric and analytic properties of the  generalized flow map associated to  an axisymmetric vector field.
\subsection {Littlewood-Paley Theory and a space-time logarithmic inequality}

Let $(\chi,\varphi)$ be a couple of smooth functions with  values in $[0,1]$
such that $\text{Supp}\,\chi\subset\big\{\xi\in\mathbb{R}^{n}\big|\,|\xi|\leq\frac{4}{3}\big\}$,
$\text{Supp}\,\varphi\subset\big\{\xi\in\mathbb{R}^{n}\big|\,\frac{3}{4}\leq|\xi|\leq\frac{8}{3}\big\}$ and
\begin{equation*}
    \chi(\xi)+\sum_{j\geq0}\varphi(2^{-j}\xi)=1\quad \text {for  each}\quad\xi\in \mathbb{R}^{n}.
\end{equation*}
For every $u\in \mathcal{S}'(\mathbb{R}^{n})$, we define the littlewood-Paley operators as follows:
\begin{equation*}
 {S}_{j}u:=\chi(2^{-j}D)u\quad\text{and}\quad   {\Delta}_{j}u:=\varphi(2^{-j}D)u\quad \text{for all}\quad j\geq0.
\end{equation*}
From this, it is easy to verify  that
\begin{equation*}
    u=\sum_{j\geq-1}{\Delta}_{j}u,\quad\text{in}\quad\mathcal{S}'(\mathbb{R}^{n}),
\end{equation*}
and
\begin{equation*}
    {\Delta}_{j}{\Delta}_{j'}u\equiv 0\quad \text{if}\quad |j-j'|\geq 2.
\end{equation*}
Next, we recall the classical  Bernstein lemma which will be useful throughout this paper (\mbox{ cf. \cite{che1}}).
\begin{lemma}[Bernstein]\label{lb}\;
Let $1\leq p\leq q<\infty$ and $u\in L^p(\RR^n)$. There exists a positive constant $C$ such that for $j,\,k\in\NN$, we have
\begin{equation*}
\sup_{|\alpha|=k}\|\partial ^{\alpha}S_{j}u\|_{L^q(\RR^n)}
\leq
C^k\,2^{j\bigl(k+n(\frac{1}{p}-\frac{1}{q})\bigr)}\|S_{j}u\|_{L^p(\RR^n)},
\end{equation*}
and
\begin{equation*}
\ C^{-k}2^
{qk}\|{\Delta}_{j}u\|_{L^p(\RR^n)}
\leq
\sup_{|\alpha|=k}\|\partial ^{\alpha}{\Delta}_{j}u\|_{L^p(\RR^n)}
\leq
C^k2^{jk}\|{\Delta}_{q} u\|_{L^p(\RR^n)}.
\end{equation*}
\end{lemma}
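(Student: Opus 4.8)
The plan is to prove both families of inequalities by the classical kernel representation of the Littlewood–Paley projectors, the only real delicacy being to keep the constants geometric in $k$ (of the form $C^{k}$) uniformly in the Lebesgue exponents. I read the second display as the two-sided comparison $C^{-k}2^{jk}\|\Delta_j u\|_{L^p}\le\sup_{|\alpha|=k}\|\partial^\alpha\Delta_j u\|_{L^p}\le C^{k}2^{jk}\|\Delta_j u\|_{L^p}$, i.e.\ with $2^{jk}$ and $\Delta_j u$ throughout.

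For the ball estimate I would exploit that $S_j u=\chi(2^{-j}D)S_j u$ after inserting a fixed auxiliary cutoff $\widetilde\chi\in C_c^\infty$ equal to $1$ on $\mathrm{Supp}\,\chi$ and supported in a ball of radius $R$. Writing $\phi:=\mathcal F^{-1}\widetilde\chi$ and rescaling, one obtains the exact identity
$$\partial^\alpha S_j u=2^{j(|\alpha|+n)}\,(\partial^\alpha\phi)(2^j\cdot)\ast S_j u,$$
so Young's convolution inequality with $\tfrac1r=1+\tfrac1q-\tfrac1p$ (admissible since $q\ge p$) yields
$$\|\partial^\alpha S_j u\|_{L^q}\le 2^{j(|\alpha|+n(1/p-1/q))}\,\|\partial^\alpha\phi\|_{L^r}\,\|S_j u\|_{L^p}.$$
It then remains to bound $\|\partial^\alpha\phi\|_{L^r}\le C^{|\alpha|}$ uniformly in $r\in[1,\infty]$. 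By log-convexity of the $L^r$ norms this reduces to the endpoints $r=1,\infty$: the $L^\infty$ bound is immediate from $\|\partial^\alpha\phi\|_{L^\infty}\le(2\pi)^{-n}\|\xi^\alpha\widetilde\chi\|_{L^1}\le R^{|\alpha|}\|\widetilde\chi\|_{L^1}$, while the $L^1$ bound follows by writing $(1+|x|^{2N})\partial^\alpha\phi=\mathcal F^{-1}[(1+(-\Delta_\xi)^N)(\xi^\alpha\widetilde\chi)]$ with $2N>n$ and controlling the Leibniz expansion of $\Delta_\xi^N(\xi^\alpha\widetilde\chi)$; since $N$ depends only on $n$, this produces the geometric factor $C^{|\alpha|}$. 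Taking $|\alpha|=k$ gives the claimed first inequality, and its specialization $p=q$, $r=1$ applied with an annular cutoff $\widetilde\varphi$ in place of $\widetilde\chi$ gives the upper bound in the second display.

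For the lower bound I would invert the $k$-fold derivative at the multiplier level. Using the multinomial identity $|\xi|^{2k}=\sum_{|\alpha|=k}\binom{k}{\alpha}(i\xi)^\alpha\overline{(i\xi)^\alpha}$ together with $|\xi|\gtrsim 2^j$ on the annulus carrying $\widehat{\Delta_j u}$, one has on that support
$$1=\sum_{|\alpha|=k}\binom{k}{\alpha}\frac{\overline{(i\xi)^\alpha}}{|\xi|^{2k}}(i\xi)^\alpha,$$
so, inserting a cutoff $\widetilde\varphi\equiv1$ on $\mathrm{Supp}\,\varphi$ supported away from the origin,
$$\Delta_j u=\sum_{|\alpha|=k}\binom{k}{\alpha}\,m_\alpha(D)\,\partial^\alpha\Delta_j u,\qquad m_\alpha(\xi):=\frac{\overline{(i\xi)^\alpha}}{|\xi|^{2k}}\widetilde\varphi(2^{-j}\xi).$$
Rescaling shows $m_\alpha(D)$ has convolution kernel $2^{-jk}2^{jn}\check\mu_\alpha(2^j\cdot)$ with $\mu_\alpha(\eta)=\overline{(i\eta)^\alpha}|\eta|^{-2k}\widetilde\varphi(\eta)$ smooth and compactly supported away from $0$, whence $\|m_\alpha(D)f\|_{L^p}\le 2^{-jk}\|\check\mu_\alpha\|_{L^1}\|f\|_{L^p}$. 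Bounding $\|\check\mu_\alpha\|_{L^1}\le C^{k}$ by the same weighted-$L^1$ argument as above and summing, using $\sum_{|\alpha|=k}\binom{k}{\alpha}=n^k$, gives $\|\Delta_j u\|_{L^p}\le C^{k}2^{-jk}\sup_{|\alpha|=k}\|\partial^\alpha\Delta_j u\|_{L^p}$, which is the asserted lower bound.

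I expect the genuine obstacle to be the uniform geometric control of the kernels' $L^1$ norms — that is, proving $\|\partial^\alpha\phi\|_{L^1}$ and $\|\check\mu_\alpha\|_{L^1}$ grow no faster than $C^{k}$ — since the naive Leibniz expansion of $\Delta_\xi^N$ applied to $\xi^\alpha\widetilde\chi$ (respectively to $\overline{(i\eta)^\alpha}|\eta|^{-2k}\widetilde\varphi$) generates many terms, and in the annular case differentiating the negative power $|\eta|^{-2k}$ introduces $k$-dependent combinatorial constants that must be shown to remain geometric. All remaining steps (the scaling identities, Young's inequality, and the multinomial identity) are routine.
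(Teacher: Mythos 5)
Your proof is correct, and it coincides with the classical argument in the reference the paper cites: the paper offers no proof of Lemma \ref{lb} at all (it simply recalls the lemma from \cite{che1}), and your route --- the rescaled-kernel identity plus Young's inequality with $\tfrac1r=1+\tfrac1q-\tfrac1p$ for the direct estimates, and inversion of the $k$-th order derivatives on the annulus via the multipliers $\overline{(i\xi)^{\alpha}}|\xi|^{-2k}\widetilde\varphi(2^{-j}\xi)$ and the multinomial identity, with weighted-$L^1$ kernel bounds keeping all constants geometric in $k$ --- is exactly the standard proof found in \cite{che1} (see also \cite{bcd}). You also correctly read through the statement's typographical slips ($2^{qk}$ should be $2^{jk}$, and $\|\Delta_q u\|_{L^p}$ on the right should be $\|\Delta_j u\|_{L^p}$), so nothing further is needed.
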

Let us now introduce Bony's
decomposition  (see for example \cite{bcd}) which is a basic tool of  the para-differential calculus. Specifically, one can split
a product $uv$ into three parts as follows:
\begin{equation*}
uv=T_u v+T_v u+\mathcal{R}(u,v),
\end{equation*}
where
\begin{equation*}
T_u v=\sum_{q}S_{q-1}u\Delta_q v, \quad\text{and}\quad \mathcal{R}(u,v)=
\sum_{q}\Delta_qu \widetilde \Delta_{q}v,
\end{equation*}
$$
\textnormal{with}\quad {\widetilde \Delta}_{q}=\Delta_{q-1}+\Delta_{q}+\Delta_{q+1}.
$$
In usual, $T_{u}v$ is called para-product of $v$ by $u$ and  $\mathcal{R}(u,v)$ denotes the remainder term.
In addition, it is worthwhile to point out that ${\widetilde \Delta}_{q}\Delta_q=\Delta_q$ for $q\geq0$ by using the property of support of $\varphi$.
 \begin{definition}\label{def2.2}
For $s\in \mathbb{R}$, $(p,q)\in [1,+\infty]^{2}$ and $u\in \mathcal{S}'(\mathbb{R}^{n})$, the \emph{inhomogeneous Besov spaces} are defined by
\begin{equation*}
   {B}^{s}_{p,q}(\mathbb{R}^{n}):=\big\{u\in\mathcal{S}'(\mathbb{R}^{n})\big|\,\norm{u}_{{B}^{s}_{p,q}(\mathbb{R}^{n})}<\infty\big\}.
\end{equation*}
Here
\begin{equation*}
    \norm{u}_{{B}^{s}_{p,q}(\mathbb{R}^{n})}:=\begin{cases}
    \Big(\sum_{j\geq-1}2^{jsq}\norm{{\Delta}_{j}u}_{L^{p}(\mathbb{R}^{n})}^{q}\Big)^{\frac{1}{q}}\quad&\text{if}\quad r<\infty,\\
    \sup_{j\geq-1}2^{js}\norm{{\Delta}_{j}u}_{L^{p}(\mathbb{R}^{n})}\quad&\text{if}\quad r=\infty.
    \end{cases}
    \end{equation*}
\end{definition}
Since the dissipation only occurs in the horizontal direction to Equations \eqref{bs}, we need introduce  the following anisotropic space.
\begin{definition}\label{def-anisotropic}
For $s,t\in \mathbb{R}$, $(p,q)\in [1,+\infty]^{2}$ and $u\in \mathcal{S}'(\mathbb{R}^{3})$,  we define the \emph{anisotropic Besov spaces} as
\begin{equation*}
   {B}^{s,t}_{p,q}(\mathbb{R}^{3}):=\big\{u\in\mathcal{S}'(\mathbb{R}^{3})\big|\,\norm{u}_{{B}^{s,t}_{p,q}(\mathbb{R}^{3})}<\infty\big\},
\end{equation*}
where
\begin{equation*}
    \norm{u}_{{B}^{s,t}_{p,q}(\mathbb{R}^{3})}:=\begin{cases}
  \Big(\sum_{j,\,k\geq-1}2^{jsq}2^{ktq}\big\|{\Delta}^{h}_{j}{\Delta}^{v}_{k}u\big\|_{L^{p}(\mathbb{R}^{3})}^{q}\Big)^{\frac{1}{q}}
    \quad&\text{if}\quad r<\infty,\\
     =\sup_{j,\,k\geq-1}2^{js}2^{kt}\big\|{\Delta}^{h}_{j}{\Delta}^{v}_{k}u\big\|_{L^{p}(\mathbb{R}^{3})}\quad&\text{if}\quad r=\infty.
    \end{cases}
\end{equation*}
Here  and in what follows, $$\Delta_i^hf(x_h):=2^{2i}\int_{\RR^2}\varphi(x_h-2^i y)f(y)\intd y$$ and $$\Delta_j^vf(z):=2^{2j}\int_{\RR}\varphi(z-2^j y)f(y)\intd y.$$
\end{definition}
In the following, we briefly review some basic properties for $H^{s,t}$ spaces which will be useful later.
\begin{lemma}[\cite{MZ2012}]\label{properties}
\begin{enumerate}[\rm (i)]
There hold that
\item\label{eq.item-1}
For $s_{2}\geq s_{1}$ and $t_{2}\geq t_{1}$, one has $\norm{u}_{H^{s_{2},t_{2}}(\mathbb{R}^{3})}\hookrightarrow\norm{u}_{H^{s_{1},t_{1}}(\mathbb{R}^{3})}.$
    \item\label{eq.item-2} For $s_{1},s_{2},t_{1},t_{2}\in \mathbb{R}$, there exists $\theta\in[0,1]$ such that
    $$\norm{u}_{H^{\theta s_{1}+(1-\theta)s_{2},\theta t_{1}+(1-\theta)t_{2}}(\mathbb{R}^{3})}\leq
    \norm{u}^{\theta}_{H^{s_{2},t_{2}}(\mathbb{R}^{3})}\norm{u}^{1-\theta}_{H^{s_{1},t_{1}}(\mathbb{R}^{3})}.
    $$
  \item\label{eq.item-3}  For $s,t\geq 0$, $\norm{u}_{H^{s,t}(\mathbb{R}^{3})}$ is equivalent to $$\norm{u}_{L^{2}(\mathbb{R}^{3})}+\norm{\Lambda^{s}_{h}u}_{L^{2}(\mathbb{R}^{3})}
  +\norm{\Lambda^{t}_{v}u}_{L^{2}(\mathbb{R}^{3})}+\norm{\Lambda^{t}_{v}\Lambda^{s}_{h}u}_{L^{2}(\mathbb{R}^{3})}.$$
  \item \label{eq.item-4}$\norm{u}_{H^{s,t}(\mathbb{R}^{3})}\simeq\big\|\|u\|_{{H}^{s}(\mathbb{R}_{h}^{2})}\big\|_{{H}^{t}(\mathbb{R}_{v})}\simeq
  \big\|\|u\|_{{H}^{t}(\mathbb{R}_{v})}\big\|_{{H}^{s}(\mathbb{R}_{h}^{2})}.$
  \item\label{eq.item-5} For $s>1$ and $t>\frac 12$, $\norm{u}_{H^{s,t}(\mathbb{R}^{3})}$ is an algebra.
\end{enumerate}
\end{lemma}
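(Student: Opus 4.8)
The plan is to reduce every item to the Plancherel identity together with the double Littlewood--Paley decomposition, using that $H^{s,t}=B^{s,t}_{2,2}$, so that by definition
$$\norm{u}^2_{H^{s,t}(\RR^3)}=\sum_{j,k\geq-1}2^{2js}2^{2kt}\big\|\Delta_j^h\Delta_k^v u\big\|^2_{L^2(\RR^3)},$$
and, applying Plancherel in both the horizontal and the vertical frequency variables,
$$\norm{u}^2_{H^{s,t}(\RR^3)}\simeq\int_{\RR^3}\langle\xi_h\rangle^{2s}\langle\xi_v\rangle^{2t}\,|\widehat u(\xi_h,\xi_v)|^2\intd\xi_h\intd\xi_v,$$
where $\langle\cdot\rangle=(1+|\cdot|^2)^{1/2}$. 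Once this Fourier-side description is in hand, items \eqref{eq.item-3} and \eqref{eq.item-4} are essentially immediate, \eqref{eq.item-1} and \eqref{eq.item-2} follow from elementary monotonicity and Hölder arguments on the defining double series, and \eqref{eq.item-5} is a paraproduct estimate.

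For \eqref{eq.item-1} I would argue directly on the series. For $j,k\geq0$ the weights satisfy $2^{2js_1}\leq2^{2js_2}$ and $2^{2kt_1}\leq2^{2kt_2}$, so those terms are dominated by the corresponding terms of $\norm{u}_{H^{s_2,t_2}}$; the finitely many boundary blocks with $j=-1$ or $k=-1$ are treated separately, using that each fixed block obeys $\|\Delta_j^h\Delta_k^v u\|_{L^2}\lesssim\norm{u}_{H^{s_2,t_2}}$ with a constant depending only on $s_2,t_2$. Summing yields the embedding. For \eqref{eq.item-2}, I would write the intermediate weight as the geometric mean
$$2^{j(\theta s_1+(1-\theta)s_2)}2^{k(\theta t_1+(1-\theta)t_2)}=\big(2^{js_1}2^{kt_1}\big)^\theta\big(2^{js_2}2^{kt_2}\big)^{1-\theta}$$
and apply Hölder's inequality with exponents $1/\theta$ and $1/(1-\theta)$ to the double $\ell^2$ sum, which produces the claimed interpolation bound.

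Items \eqref{eq.item-3} and \eqref{eq.item-4} rest on the elementary comparison $\langle a\rangle^{s}\langle b\rangle^{t}\simeq 1+|a|^{s}+|b|^{t}+|a|^{s}|b|^{t}$, valid for $s,t\geq0$. Inserting this into the Fourier-side description and invoking Plancherel term by term identifies the four summands with $\norm{u}^2_{L^2}$, $\norm{\Lambda_h^s u}^2_{L^2}$, $\norm{\Lambda_v^t u}^2_{L^2}$ and $\norm{\Lambda_v^t\Lambda_h^s u}^2_{L^2}$, which is \eqref{eq.item-3}. For \eqref{eq.item-4}, since the multiplier $\langle\xi_h\rangle^{2s}\langle\xi_v\rangle^{2t}$ factorizes, Fubini's theorem lets me integrate first in $\xi_v$ and then in $\xi_h$ (or in the reverse order); recognizing the inner integral as a one-dimensional $H^t(\RR_v)$ norm, respectively a two-dimensional $H^s(\RR_h^2)$ norm, yields both iterated descriptions.

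The genuinely delicate point is the algebra property \eqref{eq.item-5}, and this is where I expect the main difficulty. I would view $H^{s,t}$ through the mixed picture of \eqref{eq.item-4} as the space $H^s(\RR_h^2;H^t(\RR_v))$ of $H^t(\RR_v)$-valued functions on $\RR_h^2$, which is legitimate because $H^t(\RR_v)$ is a Banach algebra for $t>\frac12$ thanks to the one-dimensional embedding $H^t(\RR_v)\hookrightarrow L^\infty(\RR_v)$. Running Bony's decomposition in the horizontal variable, the paraproduct and remainder terms are controlled provided one factor can be placed in $L^\infty(\RR_h^2)$; the two-dimensional embedding $H^s(\RR_h^2)\hookrightarrow L^\infty(\RR_h^2)$, valid precisely for $s>1$, supplies this. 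The crux is to propagate the $H^t(\RR_v)$-valued norms through every paraproduct term simultaneously, so that both $L^\infty$ embeddings are used in tandem; the conditions $s>1$ and $t>\frac12$ are exactly what close the estimate, and handling the remainder term, where the horizontal frequencies of the two factors are comparable, is the step demanding the most care.
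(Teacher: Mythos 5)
Your proposal cannot be compared against an internal proof, because this paper never proves Lemma \ref{properties} at all: it is quoted from the authors' earlier work \cite{MZ2012}, and no argument for it appears here. Judged on its own, your proof is correct and takes the natural route. The Fourier-side reduction $\norm{u}^2_{H^{s,t}}\simeq\int_{\RR^3}\langle\xi_h\rangle^{2s}\langle\xi_v\rangle^{2t}|\widehat u(\xi)|^2\,\intd\xi$ is the standard equivalence for $B^{s,t}_{2,2}$, and it delivers items (i)--(iv) essentially as you say: for (i) the only slip is the phrase ``finitely many boundary blocks'' --- the rows $j=-1$ and $k=-1$ each contain infinitely many blocks, but they are handled uniformly since the weight ratio is the fixed constant $2^{2(s_2-s_1)+2(t_2-t_1)}$; for (ii) your exponent convention (with $\theta$ attached to $H^{s_1,t_1}$) is the correct one, and the paper's statement, which attaches $\theta$ to $H^{s_2,t_2}$, is a harmless typo given its ``there exists $\theta$'' phrasing; for (iii) the comparison $\langle a\rangle^{s}\langle b\rangle^{t}\simeq 1+|a|^{s}+|b|^{t}+|a|^{s}|b|^{t}$ is exactly right; for (iv) you should state explicitly that the iterated norm must be read in the vector-valued sense $H^t\big(\RR_v;H^s(\RR^2_h)\big)$ --- applying a fractional scalar $H^t$ norm to the scalar function $z\mapsto\norm{u(\cdot,z)}_{H^s(\RR^2_h)}$ is not meaningful, and it is Plancherel for Hilbert-space-valued functions that legitimizes your Fubini computation.

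Item (v) is the one place where your text is an outline rather than a proof, but the outline is sound and does close: with $B=H^t(\RR_v)$ a Banach algebra for $t>\frac12$, the horizontal Bony decomposition gives $\norm{T_uv}_{H^s_h(B)}\lesssim\norm{u}_{L^\infty_h(B)}\norm{v}_{H^s_h(B)}$ and symmetrically, while for the remainder the two-dimensional Bernstein inequality $\norm{\Delta^h_q u}_{L^\infty_h(B)}\lesssim 2^{q}\norm{\Delta^h_q u}_{L^2_h(B)}$ together with $s>1$ (in fact $s>0$ once one factor is placed in $L^\infty_h(B)$) yields summability by Young's inequality for series; the embedding $H^s(\RR^2_h;B)\hookrightarrow L^\infty(\RR^2_h;B)$ for $s>1$ then closes the estimate. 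A referee would ask you either to write out these vector-valued paraproduct bounds or to cite a vector-valued product/Moser estimate, since (v) is precisely the item for which the present paper defers to \cite{MZ2012}; but there is no gap of ideas, only of execution.
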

Let us point out that the usual Sobolev spaces $H^s$ and $H^{s,t}$ coincide with  Besov spaces  $B_{2,2}^s$ and $B_{2,2}^{s,t}$, respectively.
\begin{lemma}[Morse estimate]\label{lem2.2}
Let $s>0$, $q\in[1,\infty]$. Then there exists a constant $C$ such that
\begin{equation}\|fg\|_{{
B}_{p,q}^s}\leq C\big(\| f\|_{L^{p_1}}\|g\|_{{ B}_{p_2,q}^s}+\|
g\|_{L^{r_1}}\|f\|_{{ B}_{r_2,q}^s}\big),
\end{equation}  where
$p_1,r_1\in[1,\infty]$ satisfy $\frac{1}{p}=\frac{1}{p_1}+\frac{1}{p_2}=\frac{1}{r_1}+\frac{1}{r_2}$.
\end{lemma}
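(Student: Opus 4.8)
The plan is to prove the estimate through Bony's decomposition recalled above, writing $fg = T_f g + T_g f + \mathcal{R}(f,g)$ and bounding each of the three pieces separately in $B^s_{p,q}$. The two para-products $T_f g$ and $T_g f$ will produce the first and second terms on the right-hand side respectively, while the remainder $\mathcal{R}(f,g)$ will be absorbed into the second term. The hypothesis $s>0$ is used (and is genuinely needed) precisely to handle this remainder, which I expect to be the main obstacle.

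For the para-product $T_f g = \sum_q S_{q-1}f\,\Delta_q g$ I would exploit the spectral localization: since $S_{q-1}f$ is frequency-supported in a ball of radius $\sim 2^q$ and $\Delta_q g$ in an annulus of size $\sim 2^q$, the product $S_{q-1}f\,\Delta_q g$ is frequency-supported in a fixed annulus $\sim 2^q$, so $\Delta_j(S_{q-1}f\,\Delta_q g)$ vanishes unless $|j-q|\le N_0$ for a universal $N_0$. Combining this with Hölder's inequality ($\frac1p=\frac1{p_1}+\frac1{p_2}$) and the uniform bound $\|S_{q-1}f\|_{L^{p_1}}\lesssim\|f\|_{L^{p_1}}$ (which follows from Young's inequality, $S_{q-1}$ being convolution against a kernel of uniformly bounded $L^1$ norm), I obtain
\[
2^{js}\|\Delta_j T_f g\|_{L^p}\lesssim \|f\|_{L^{p_1}}\sum_{|q-j|\le N_0}2^{(j-q)s}\,\big(2^{qs}\|\Delta_q g\|_{L^{p_2}}\big).
\]
Taking the $\ell^q(j)$ norm and applying Young's inequality for the convolution against the finitely supported --- hence $\ell^1$ --- sequence $2^{(\cdot)s}\mathbf 1_{|\cdot|\le N_0}$ yields $\|T_f g\|_{B^s_{p,q}}\lesssim\|f\|_{L^{p_1}}\|g\|_{B^s_{p_2,q}}$. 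The symmetric para-product $T_g f$ is treated identically with the exponents $(r_1,r_2)$, giving $\|T_g f\|_{B^s_{p,q}}\lesssim\|g\|_{L^{r_1}}\|f\|_{B^s_{r_2,q}}$.

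The remainder $\mathcal{R}(f,g)=\sum_q \Delta_q f\,\widetilde\Delta_q g$ is where the difficulty lies: the frequency supports of $\Delta_q f$ and $\widetilde\Delta_q g$ both lie in annuli $\sim 2^q$, so their product is supported only in a \emph{ball} of radius $\sim 2^q$ rather than an annulus; consequently $\Delta_j$ selects the entire tail $q\ge j-N_0$ instead of a bounded band in $q$. Using Hölder with $\frac1p=\frac1{r_1}+\frac1{r_2}$, the uniform boundedness $\|\widetilde\Delta_q g\|_{L^{r_1}}\lesssim\|g\|_{L^{r_1}}$, and writing $2^{qs}\|\Delta_q f\|_{L^{r_2}}=d_q$ with $\|d_q\|_{\ell^q}=\|f\|_{B^s_{r_2,q}}$, I would get
\[
2^{js}\|\Delta_j \mathcal{R}(f,g)\|_{L^p}\lesssim \|g\|_{L^{r_1}}\sum_{q\ge j-N_0}2^{(j-q)s}d_q.
\]
Here the convolution kernel $2^{(j-q)s}$ is supported on $\{j-q\le N_0\}$ and is summable \emph{exactly} because $s>0$ forces geometric decay as $q\to\infty$; this is the one step where positivity of $s$ is indispensable. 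Young's inequality then gives $\|\mathcal{R}(f,g)\|_{B^s_{p,q}}\lesssim\|g\|_{L^{r_1}}\|f\|_{B^s_{r_2,q}}$, and summing the three contributions completes the proof. I would finally remark that the roles of $f$ and $g$ in the remainder may be interchanged, so it could equally be absorbed into the first term; no sign condition on the exponents is needed, only $s>0$.
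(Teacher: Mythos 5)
Your proof is correct and is precisely the standard Bony-decomposition argument (paraproduct bounds valid for all $s$, remainder bound requiring $s>0$ because the tail sum $\sum_{q\ge j-N_0}2^{(j-q)s}$ must converge) that the paper itself invokes: the paper omits the proof entirely, calling it standard and citing the reference \cite{CWZ2012}, where this same argument appears. So your write-up simply supplies the details the authors chose to skip, and no gap or deviation needs to be flagged.
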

\begin{proof}
The proof of Lemma \ref{lem2.2} is standard, here we omit the details. One also refer to \cite{CWZ2012} for the proof.
\end{proof}
\begin{lemma}\label{lem-p}
For any $p\in]1,\infty[$, there holds that
\begin{equation}
\|\nabla u\|_{L^p(D)}\leq C\frac{p^2}{p-1}\|\omega\|_{L^p(D)},
\end{equation}
where $C$ depending only on the domain $D$, and not on $p$.
\end{lemma}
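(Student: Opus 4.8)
The plan is to recover $\nabla u$ from $\omega$ by an explicit singular-integral representation and then to invoke the \emph{quantitative} Calder\'on--Zygmund theory, tracking carefully how the operator norm depends on $p$. First I would use the divergence-free condition: since $\textnormal{div}\,u=0$ and $\omega=\textnormal{curl}\,u$, the vector identity $\textnormal{curl}\,(\textnormal{curl}\,u)=\nabla(\textnormal{div}\,u)-\Delta u$ gives $-\Delta u=\textnormal{curl}\,\omega$, whence
$$
u=(-\Delta)^{-1}\textnormal{curl}\,\omega,\qquad\text{so}\qquad \nabla u=\nabla(-\Delta)^{-1}\textnormal{curl}\,\omega .
$$
In Fourier variables the entries of $\nabla u$ carry the symbols $\xi_i\xi_j/|\xi|^2$, so that $\nabla u$ is a finite linear combination of the second-order operators $R_iR_j=\partial_i\partial_j(-\Delta)^{-1}$ applied to the components of $\omega$, where $R_k$ are the Riesz transforms. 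On $\RR^n$ (here $n=3$) these are convolutions against Calder\'on--Zygmund kernels $K_{ij}(x)=c_n\bigl(|x|^2\delta_{ij}-n\,x_ix_j\bigr)|x|^{-n-2}$; on a general domain $D$ the structure is the same with the kernel modified by the geometry, which only affects the value of $C$.

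The key point is the quantitative $L^p$ bound for $T=R_iR_j$. This operator is bounded on $L^2$ with norm at most one, since its Fourier symbol $-\xi_i\xi_j/|\xi|^2$ is bounded by $1$; moreover $K_{ij}$ enjoys the size estimate $|K(x)|\lesssim|x|^{-n}$, the gradient (H\"ormander) estimate $|\nabla K(x)|\lesssim|x|^{-n-1}$, and the cancellation of its mean over annuli. With these ingredients the standard Calder\'on--Zygmund decomposition yields a weak-type $(1,1)$ estimate $\norm{Tf}_{L^{1,\infty}}\leq C\norm{f}_{L^1}$ with $C$ independent of $p$. I would then interpolate: Marcinkiewicz interpolation between this weak $(1,1)$ bound and the $L^2$ bound produces, for $1<p\leq 2$, the estimate $\norm{T}_{L^p\to L^p}\leq C/(p-1)$, the blow-up $1/(p-1)$ being precisely the Marcinkiewicz constant near the endpoint $p=1$.

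To reach the range $p\geq 2$ I would dualize: each $T=R_iR_j$ is (up to sign) self-adjoint and hence its adjoint is a second-order Riesz transform of the same type, so with $p'=p/(p-1)\in(1,2]$ one gets $\norm{T}_{L^p\to L^p}=\norm{T^*}_{L^{p'}\to L^{p'}}\leq C/(p'-1)=C(p-1)\leq Cp$. Combining the two regimes, $\norm{T}_{L^p\to L^p}\leq C/(p-1)$ for $1<p\leq 2$ and $\leq Cp$ for $p\geq 2$, and observing that the single majorant $\frac{p^2}{p-1}$ dominates $1/(p-1)$ when $p\leq 2$ (because $p^2\geq 1$) and dominates $p$ when $p\geq 2$ (because $p/(p-1)\geq 1$), summing over the finitely many pairs $(i,j)$ yields $\norm{\nabla u}_{L^p}\leq C\frac{p^2}{p-1}\norm{\omega}_{L^p}$.

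The main obstacle is to capture the $p$-dependence sharply rather than crudely: one must treat $\nabla u=\sum_{i,j}R_iR_j\,\omega$ as a \emph{single} Calder\'on--Zygmund operator, whose $L^p$ norm grows only linearly in $p$, since estimating each composition through $\norm{R_iR_j}\leq\norm{R_i}\,\norm{R_j}\lesssim p^2$ would produce the wrong growth $p^2$ for large $p$. The delicate technical points are therefore that the weak $(1,1)$ constant emerging from the Calder\'on--Zygmund decomposition is genuinely independent of $p$, and that the Marcinkiewicz interpolation constant degenerates no faster than $(p-1)^{-1}$ as $p\to 1$; once these are secured, the duality step governing large $p$ is routine.
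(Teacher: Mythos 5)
Your proposal is, in substance, the same route the paper takes: the paper's proof of Lemma \ref{lem-p} consists of citing the Biot--Savart law together with the classical Calder\'on--Zygmund theorem for simple domains (and Yudovich's technique for general ones), and you have written out exactly that argument for the whole space --- $\nabla u=\nabla(-\Delta)^{-1}\mathrm{curl}\,\omega$ is a matrix of second-order Riesz transforms, treated as a \emph{single} Calder\'on--Zygmund operator; weak $(1,1)$ plus the $L^2$ bound plus interpolation gives $C/(p-1)$ near $p=1$, duality gives $Cp$ for large $p$, and $p^2/(p-1)$ dominates both. Your emphasis on not estimating $R_iR_j$ as a composition of two first-order Riesz transforms (which would give the useless $O(p^2)$ growth as $p\to\infty$) is the right point: the linear-in-$p$ growth is precisely what the paper needs later to convert bounds on $\|\omega\|_{\sqrt{\mathbb{L}}}$ into bounds on $\sup_{p\geq2}\|\nabla u\|_{L^p}/p^{\frac32}$.

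Two caveats. First, the interpolation step is stated too casually: the Marcinkiewicz constant between weak $(1,1)$ and strong $(2,2)$ blows up at \emph{both} endpoints --- it is of order $\bigl(\frac{p}{p-1}+\frac{p}{2-p}\bigr)^{1/p}$ --- so it does not directly yield $C/(p-1)$ uniformly on $]1,2]$. The patch is routine (run Marcinkiewicz only on $]1,\frac32]$, then cover $[\frac32,2]$ by Riesz--Thorin between the resulting strong $(\frac32,\frac32)$ bound and the $(2,2)$ bound), but it has to be said, since otherwise the constant you produce degenerates as $p\to2^-$. Second, your treatment of general domains --- ``the structure is the same with the kernel modified by the geometry, which only affects the value of $C$'' --- is not an argument: for a domain with boundary the Biot--Savart representation goes through the Green's function rather than a convolution kernel, and obtaining an $L^p$ operator norm with the stated $p$-dependence there is exactly the content of the ``rather awkward technique'' of Yudovich that the paper cites. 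Since the paper only ever applies the lemma with $D=\RR^3$, this gap is immaterial to the application, but as a proof of the lemma as stated (with $C$ depending only on a general domain $D$) your argument covers only the whole-space case.
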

\begin{proof}
The classical result of Calderon-Zygmund \cite{C-Z52} together with the Biot-Savart law allows to obtain the desired result for simple domains such as the whole space $\RR^n$, the half space or the ball.
As for the general case the proof is based on a rather awkward technique, developed in \cite{Yu,Y-66}.
\end{proof}
\begin{proposition}[\cite{che1}]\label{heat}
Let $u$ be solution of the classical heat equations
\begin{equation*}
\left\{\begin{array}{ll}
\partial_t u-\Delta u=0,\quad(t, x)\in \mathbb{R}^+\times\mathbb{R}^{n},\\
u|_{t=0}=u_{0}.
\end{array}\right.
\end{equation*}
Then there exists a constant $C>0$ such that  for each $j\geq0$,
 \begin{equation*}
\|\Delta_{j}u (t) \|_{L^{p}(\mathbb{R}^{n})}=\|e^{t\Delta }\Delta_{j}u_{0}\|_{L^{p}(\mathbb{R}^{n})}\leq Ce^{-ct2^{2j}}\|u_{0}\|_{L^{p}(\mathbb{R}^{n})}.
 \end{equation*}
\end{proposition}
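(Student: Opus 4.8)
The statement is the standard Bernstein-type decay estimate for frequency-localized solutions of the heat equation, so I would follow the classical convolution-kernel argument. The plan is to first exploit that both $\Delta_j=\varphi(2^{-j}D)$ and the heat semigroup $e^{t\Delta}$ act as Fourier multipliers and therefore commute; this already gives the asserted identity $\Delta_j u(t)=e^{t\Delta}\Delta_j u_0$. The genuine content is the exponential bound, and for this I would realize $\Delta_j u(t)$ as a convolution. Since on the Fourier side $\widehat{\Delta_j u(t)}(\xi)=\varphi(2^{-j}\xi)e^{-t|\xi|^2}\widehat{u_0}(\xi)$, setting $h_{j,t}:=\mathcal F^{-1}\big(\varphi(2^{-j}\cdot)e^{-t|\cdot|^2}\big)$ yields $\Delta_j u(t)=h_{j,t}*u_0$. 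By Young's convolution inequality it then suffices to prove $\|h_{j,t}\|_{L^1}\leq Ce^{-ct2^{2j}}$, after which $\|\Delta_j u(t)\|_{L^p}\leq\|h_{j,t}\|_{L^1}\|u_0\|_{L^p}$ closes the estimate for every $p\in[1,\infty]$.

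To control the kernel I would first remove the $j$-dependence by a parabolic rescaling. Substituting $\xi=2^j\eta$ gives $h_{j,t}(x)=2^{jn}K_s(2^jx)$ with $s:=t2^{2j}$ and $K_s:=\mathcal F^{-1}\big(\varphi(\eta)e^{-s|\eta|^2}\big)$; since the $L^1$ norm is invariant under this rescaling, the task reduces to the single uniform estimate $\|K_s\|_{L^1}\leq Ce^{-cs}$ for all $s\geq 0$. The crucial structural point is that $\varphi$ is supported in the annulus $\{3/4\leq|\eta|\leq 8/3\}$, so on $\text{Supp}\,\varphi$ one has $e^{-s|\eta|^2}\leq e^{-(9/16)s}$, which supplies the desired exponential gain.

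The $L^1$ bound itself I would obtain by a Cauchy--Schwarz/Plancherel argument carrying enough regularity to beat the volume growth. Writing $\phi_s(\eta):=\varphi(\eta)e^{-s|\eta|^2}$ and fixing an integer $N>n/2$, I split $\|K_s\|_{L^1}=\int(1+|x|^2)^{-N/2}(1+|x|^2)^{N/2}|K_s(x)|\intd x$ and apply Cauchy--Schwarz; the first factor is integrable by the choice of $N$, while Plancherel bounds the second factor by $\sum_{|\alpha|\leq N}\|\partial^\alpha\phi_s\|_{L^2}$. Expanding $\partial^\alpha\phi_s$ by the Leibniz rule, every derivative falling on $e^{-s|\eta|^2}$ produces a factor that is polynomial in $s$ and $\eta$, and on the compact support of $\varphi$ this is dominated by $C(1+s^N)e^{-(9/16)s}$. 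The one genuinely delicate point is precisely this step, namely ensuring that the polynomial-in-$s$ prefactors generated by the differentiations are absorbed by the Gaussian decay; this is handled by the elementary observation that $(1+s^N)e^{-(9/16)s}\leq C_c\,e^{-cs}$ for any fixed $c<9/16$. Collecting these estimates gives $\|K_s\|_{L^1}\leq Ce^{-cs}$, hence $\|h_{j,t}\|_{L^1}\leq Ce^{-ct2^{2j}}$, which together with Young's inequality yields the claim.
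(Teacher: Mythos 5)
Your proof is correct. The paper itself gives no proof of this proposition---it is quoted directly from the cited reference \cite{che1}---and your argument (commuting Fourier multipliers, parabolic rescaling to a single kernel $K_s$, the annulus support of $\varphi$ supplying the factor $e^{-(9/16)s}$, and the weighted Cauchy--Schwarz/Plancherel bound $\|K_s\|_{L^1}\leq Ce^{-cs}$ followed by Young's inequality) is precisely the classical proof found in that reference, so there is nothing to bridge or compare.
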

\begin{lemma}\label{lem-anstro}
Let $s_i>1$ and $t_i>\frac12$ with $i=1,2$. Then there exists a constant $C$ such that
\begin{equation}\label{eq-dd}
\|u\|_{L^\infty(\RR^3)}\leq C\Big(\|u\|_{L^2(\RR^3)}+\|\Lambda_{h}^{s_1}u\|_{L^2(\RR^3)}+\|\Lambda_{v}^{t_1}u\|_{L^2(\RR^3)}+
\|\Lambda_h^{s_2}\Lambda_v^{t_2}u\|_{L^2(\RR^3)}\Big).
\end{equation}
\end{lemma}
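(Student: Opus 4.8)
The plan is to prove the inequality by a double (horizontal and vertical) Littlewood--Paley decomposition, followed by an anisotropic Bernstein estimate for each dyadic block, and then to sum the resulting double series using the four norms on the right-hand side to control four complementary regions of the frequency lattice. First I would write $u=\sum_{j,k\geq-1}\Delta_j^h\Delta_k^v u$ and bound $\|u\|_{L^\infty(\RR^3)}\leq\sum_{j,k\geq-1}\|\Delta_j^h\Delta_k^v u\|_{L^\infty(\RR^3)}$, reducing the estimate to a weighted summation of the $L^2$ norms of the blocks.

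The key per-block ingredient is the anisotropic Bernstein inequality
$$\|\Delta_j^h\Delta_k^v u\|_{L^\infty(\RR^3)}\leq C\,2^{\max(j,0)}\,2^{\max(k,0)/2}\,\|\Delta_j^h\Delta_k^v u\|_{L^2(\RR^3)},$$
which I would obtain by applying the Bernstein inequality of Lemma \ref{lb} (with target exponent $\infty$) separately in the two horizontal variables and in the vertical variable. The one-dimensional vertical Bernstein gives the gain $2^{k/2}$ from $L^2_v$ to $L^\infty_v$ uniformly in $x_h$, while the two-dimensional horizontal Bernstein, applied to the $L^2_v$-valued band-limited map $x_h\mapsto\Delta_j^h\Delta_k^v u(x_h,\cdot)$, gives the gain $2^{j}$ from $L^2_h$ to $L^\infty_h$; both localizations survive because $\Delta_j^h$ and $\Delta_k^v$ act on disjoint sets of variables.

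Writing $a_{jk}:=\|\Delta_j^h\Delta_k^v u\|_{L^2(\RR^3)}$, it remains to estimate $\sum_{j,k\geq-1}2^{\max(j,0)}2^{\max(k,0)/2}a_{jk}$, which I would split into four regions, applying Cauchy--Schwarz in each. On the bulk region $j,k\geq0$ I would pair $2^{j+k/2}$ against the weight $2^{s_2 j+t_2 k}$: Cauchy--Schwarz then produces the convergent geometric series $\sum_{j,k\geq0}2^{(2-2s_2)j+(1-2t_2)k}$ (finite precisely because $s_2>1$ and $t_2>\tfrac12$) times $\big(\sum_{j,k\geq0}2^{2s_2 j+2t_2 k}a_{jk}^2\big)^{1/2}$, the latter being controlled by $\|\Lambda_h^{s_2}\Lambda_v^{t_2}u\|_{L^2}$ via the square-function characterization behind the identification $B_{2,2}^{s,t}=H^{s,t}$ recorded after Lemma \ref{properties}. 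On the strip $j=-1,\,k\geq0$ I would use the weight $2^{t_1 k}$ (convergence of $\sum_k 2^{(1-2t_1)k}$ needs $t_1>\tfrac12$), controlled by $\|\Lambda_v^{t_1}u\|_{L^2}$; symmetrically, on $k=-1,\,j\geq0$ the weight $2^{s_1 j}$ ($s_1>1$) is controlled by $\|\Lambda_h^{s_1}u\|_{L^2}$; and the single corner term $j=k=-1$ is bounded by $\|u\|_{L^2}$. Summing the four contributions yields \eqref{eq-dd}.

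The geometric summations and Cauchy--Schwarz steps are routine; the one point demanding genuine care is the anisotropic Bernstein estimate, where I must justify that taking the $L^2_v$ norm does not destroy the horizontal frequency localization, so that the horizontal Bernstein inequality still applies (equivalently, that Bernstein holds for $L^2_v$-valued functions). It is also worth remarking explicitly that the thresholds $s_i>1$ and $t_i>\tfrac12$ are exactly what makes each of the four dual geometric series converge, which is the structural reason the borderline cases must be excluded.
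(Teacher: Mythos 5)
Your proposal is correct and follows essentially the same route as the paper: the same double horizontal/vertical Littlewood--Paley decomposition, the same four-region split of the index set $\{j,k\geq-1\}$, and the same anisotropic Bernstein gain $2^{j}2^{k/2}$ per block, with the thresholds $s_i>1$, $t_i>\frac12$ playing the identical role. The only cosmetic difference lies in the final summation: you close the series by Cauchy--Schwarz against the square-function characterization of $H^{s,t}$, whereas the paper absorbs the weights into each block (using $\|\Delta_j^h\Delta_k^v\Lambda_h^{s_2}\Lambda_v^{t_2}u\|_{L^2}\leq\|\Lambda_h^{s_2}\Lambda_v^{t_2}u\|_{L^2}$ uniformly in $j,k$) and sums the resulting geometric series directly.
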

\begin{proof}
Thanks to Littewood-Paley decomposition, one has
\begin{align*}
\|u\|_{L^\infty(\RR^3)}\leq&\sum_{i,j\geq-1}\|\Delta_i^h\Delta_j^vu\|_{L^\infty(\RR^3)}\nonumber\\
=&\|\Delta_{-1}^h\Delta_{-1}^vu\|_{L^\infty(\RR^3)}
+\sum_{i\geq0}\|\Delta_i^h\Delta_{-1}^vu\|_{L^\infty(\RR^3)}+\sum_{j\geq0}|\|\Delta_{-1}^h\Delta_j^vu\|_{L^\infty(\RR^3)}\nonumber\\
&+\sum_{i,j\geq0}\|\Delta_i^h\Delta_j^vu\|_{L^\infty(\RR^3)}\nonumber\\
:=&I_1+I_2+I_3+I_4.
\end{align*}
It is clear that
\begin{equation*}
I_1\leq C\|u\|_{L^2}.
\end{equation*}
By using the Bernstein inequality, the Minkowski inequality and the fact that $s_1>1$, we infer that
\begin{align*}
I_2\leq&C\sum_{i\geq 0}2^i\|\Delta_i^h\Delta_{-1}^vu\|_{L^2(\RR^3)}\nonumber\\
\leq&C\sum_{i\geq 0}2^{i(1-s_1)}\|\Delta_i^h\Lambda_h^{s_1}u\|_{L^2(\RR^3)}\nonumber\\
\leq&C\|\Lambda_h^{s_1}u\|_{L^2(\RR^3)}.
\end{align*}
And similarly, we can conclude that for $t_1>\frac12$,
\begin{equation*}
I_3\leq C\|\Lambda_v^{t_1}u\|_{L^2(\RR^3)}.
\end{equation*}
As for the term $I_4$, the Bernstein inequality and  the Minkowski inequality enable us to conclude that for $s_2>1$ and $t_2>\frac12$,
\begin{align*}
I_4\leq&C\sum_{i,j\geq 0}2^i2^{\frac12j}\|\Delta_i^h\Delta_{j}^vu\|_{L^2(\RR^3)}\nonumber\\
\leq&C\sum_{i,j\geq 0}2^{i(1-s_2)}2^{j(\frac12-t_2)}\|\Delta_i^h\Delta_{j}^v\Lambda_h^{s_2}\Lambda_v^{t_2}u\|_{L^2(\RR^3)}\nonumber\\
\leq&C\|\Lambda_h^{s_2}\Lambda_v^{t_2}u\|_{L^2(\RR^3)}.
\end{align*}
Collecting these estimates yields the desired result \eqref{eq-dd}.
\end{proof}
Next, we will give a new type space-time logarithmic inequality which is a key component in our analysis, by using the low-high frequency decomposition technique.
\begin{lemma}\label{log}
Let $a\in]0,1]$, $p,q\in[1,\infty]$ and $s>\frac{n}{p}$. Assume that $f\in L^{1}_{T}(B_{p,q}^{s}(\mathbb{R}^n))$ such that
\begin{equation*}
\sup_{2\leq j<\infty}\int^{T}_{0}\frac{\norm{S_{j}f(t)}_{L^{\infty}(\mathbb{R}^n)}}{j^{a}}\mathrm{d}t\leq\infty.
\end{equation*}
Then the following inequality holds:
\begin{equation}\label{LOG-E}
\int^{T}_{0}\norm{f(t)}_{L^\infty(\mathbb{R}^n)}\mathrm{d}t\leq C\Big(1+\sup_{2\leq j<\infty}\int^{T}_{0}\frac{\norm{S_{j}f(t)}_{L^{\infty}(\mathbb{R}^n)}}{j^{a}}\mathrm{d}t\Big(\log\big(e+\norm{f}_{L^{1}_{T}(B_{p,q}^{s}(\mathbb{R}^n))}\big)\Big)^{a}\Big).
\end{equation}
Here the constant $C$ independent of $f$ and $T$.
\end{lemma}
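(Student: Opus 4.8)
I want to prove the logarithmic inequality
\eqref{LOG-E} by splitting $f$ into low and high frequencies at a
threshold $N\geq 2$ to be optimized. Write
$f=S_N f+(\mathrm{Id}-S_N)f$, so that pointwise
$\|f(t)\|_{L^\infty}\leq \|S_N f(t)\|_{L^\infty}+\sum_{j\geq N}\|\Delta_j f(t)\|_{L^\infty}$.
The low-frequency part I control by the given space-time quantity:
since $a\in]0,1]$ and $N\geq 2$,
$\|S_N f(t)\|_{L^\infty}=N^a\,N^{-a}\|S_N f(t)\|_{L^\infty}
\leq N^a\sup_{2\leq j<\infty}\big(j^{-a}\|S_j f(t)\|_{L^\infty}\big)$,
and integrating in $t$ gives
$\int_0^T\|S_N f(t)\|_{L^\infty}\,\mathrm{d}t
\leq N^a\,\mathcal{A}$, where
$\mathcal{A}:=\sup_{2\leq j<\infty}\int_0^T j^{-a}\|S_j f(t)\|_{L^\infty}\,\mathrm{d}t$
is exactly the finite quantity in the hypothesis.

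For the high-frequency tail I use the Bernstein lemma
(Lemma \ref{lb}) and the embedding $s>\frac np$. For each fixed $t$ and
$j\geq N$, Bernstein gives
$\|\Delta_j f(t)\|_{L^\infty}\leq C\,2^{jn/p}\|\Delta_j f(t)\|_{L^p}
=C\,2^{-j(s-n/p)}\,2^{js}\|\Delta_j f(t)\|_{L^p}$.
Summing the geometric factor $2^{-j(s-n/p)}$ over $j\geq N$ and using the
definition of the Besov norm (treating the $q=\infty$ and $q<\infty$ cases by
H\"older in the dyadic index if needed) yields
$\sum_{j\geq N}\|\Delta_j f(t)\|_{L^\infty}\leq C\,2^{-N(s-n/p)}\|f(t)\|_{B^s_{p,q}}$.
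Integrating in time produces
$\int_0^T\sum_{j\geq N}\|\Delta_j f(t)\|_{L^\infty}\,\mathrm{d}t
\leq C\,2^{-N(s-n/p)}\|f\|_{L^1_T(B^s_{p,q})}$.

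Combining the two pieces gives, for every integer $N\geq 2$,
$\int_0^T\|f(t)\|_{L^\infty}\,\mathrm{d}t
\leq C\,N^a\,\mathcal{A}+C\,2^{-N(s-n/p)}\|f\|_{L^1_T(B^s_{p,q})}$.
Now I optimize over $N$. The natural choice is to make the second term
$O(1)$: take
$N\approx \frac{1}{s-n/p}\log_2\big(e+\|f\|_{L^1_T(B^s_{p,q})}\big)$,
so that $2^{-N(s-n/p)}\|f\|_{L^1_T(B^s_{p,q})}\lesssim 1$, while the first
term becomes $C\,\mathcal{A}\big(\log(e+\|f\|_{L^1_T(B^s_{p,q})})\big)^a$ up to
the constant $(s-n/p)^{-a}$ absorbed into $C$. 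This is exactly the right-hand
side of \eqref{LOG-E}. I should add the harmless $+1$ and an
overall constant to cover the case where the chosen $N$ is $<2$ or
non-integer (round up, and bound $N^a\leq C(1+(\log(\cdots))^a)$).

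\textbf{Main obstacle.} The only genuinely delicate point is the interplay
between the choice $a\in]0,1]$ and the optimization: because the low-frequency
bound scales like $N^a$ rather than $N$, the resulting power of the logarithm
is exactly $a$, which is what makes the inequality sharp and useful downstream
(the endpoint $a=\tfrac12$ is the case actually invoked for $\nabla u$). One
must check that the factor $N^a$, and not $N$, emerges from the
$j^{-a}$-normalization in the hypothesis; this is immediate from
$N^{-a}\|S_N f\|_{L^\infty}\leq \sup_j j^{-a}\|S_j f\|_{L^\infty}$ since $N\geq 2$
lies in the admissible range. A secondary technicality is handling the Besov
summation uniformly in $q$: for $q=\infty$ the bound is direct, and for
$q<\infty$ one applies H\"older in $j$ against the summable weight
$2^{-j(s-n/p)}$, picking up only a constant depending on $s-\tfrac np$, which is
absorbed into $C$.
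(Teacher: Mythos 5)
Your proposal is correct and follows essentially the same argument as the paper: the same low--high frequency splitting at a threshold $N$, the same $N^a$ bound on $\int_0^T\|S_Nf\|_{L^\infty}\,\mathrm{d}t$ via the hypothesis, the same Bernstein-plus-Besov geometric-tail estimate $C\,2^{-N(s-n/p)}\|f\|_{L^1_T(B^s_{p,q})}$, and the same choice of $N\sim\log\big(e+\|f\|_{L^1_T(B^s_{p,q})}\big)$ making the high-frequency term $O(1)$. Your explicit treatment of the rounding/$N\geq2$ issue and of the $q<\infty$ H\"older step only makes precise details the paper leaves implicit.
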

\begin{proof}
According to the Littlewood-Paley decomposition, we decompose $f$ into two parts as follows:
\begin{equation*}
f=S_{N}f+\sum_{k\geq N}\Delta_{k}f,
\end{equation*}
where $N$ is a positive integer to be fixed later.

For the low-frequency part $S_{N}f$, it is clear that for $N\geq2$
\begin{equation*}
\int^{T}_{0}\norm{S_{N}f(t)}_{L^{\infty}(\mathbb{R}^n)}\mathrm{d}t\leq N^{a}\sup_{2\leq j<\infty}\int^{T}_{0}\frac{\norm{S_{j}f(t)}_{L^{\infty}(\mathbb{R}^n)}}{j^{a}}\mathrm{d}t.
\end{equation*}
Next, for the high-frequency part, in view of the Bernstein inequality and  the definition of Besov space, we have
\begin{align}
\int^{T}_{0}\Big\|\sum_{k\geq N}\Delta_{k}f(t)\Big\|_{L^{\infty}(\mathbb{R}^n)}\mathrm{d}t\leq&C\int^{T}_{0}\sum_{j\geq N}2^{j(\frac{n}{p}-s)}2^{js}\norm{\Delta_{k}f(t)}_{L^{p}(\mathbb{R}^n)}\mathrm{d}t\nonumber\\
\leq&C2^{-N(s-\frac np)}\int^{T}_{0}\norm{f(t)}_{B_{p,q}^{s}(\mathbb{R}^n)}\mathrm{d}t\nonumber,
\end{align}
in the last line we have used the H\"older inequality.

Collecting these estimates, we thus get
\begin{equation}\label{LOG-1}
\int^{T}_{0}\norm{f(t)}_{L^{\infty}(\mathbb{R}^n)}\mathrm{d}t\leq N^{a}\sup_{2\leq j<\infty}\int^{T}_{0}\frac{\norm{S_{j}f(t)}_{L^{\infty}(\mathbb{R}^n)}}{j^{a}}\mathrm{d}t
+C2^{-N(s-\frac np)}\int^{T}_{0}\norm{f(t)}_{B_{p,q}^{s}(\mathbb{R}^n)}\mathrm{d}t.
\end{equation}
Now we choose $N$ which more than 2 such that $2^{-N(s-\frac np)}\int^{T}_{0}\|f(t)\|_{B_{p,q}^{s}(\mathbb{R}^n)}\mathrm{d}t\leq 1$, i.e.,
\begin{equation*}
N\geq\max\Big\{2,\,\frac{\log\|f\|_{L_{T}^{1}(B_{p,q}^{s}(\mathbb{R}^n))}}{(s-\frac np)\log2}\Big\}.
\end{equation*}
This together with \eqref{LOG-1} yields the desired result \eqref{LOG-E}.
\end{proof}
\subsection{Study of the flow map}
In this subsection, we first review some basic results about the flow in \cite{A-H-K0}.
And we give another form of a growth  estimate of $\frac\rho r$ which be suitable to our problems according  to the  generalized flow map associated to  an axisymmetric vector field.
\begin{equation}\label{eq.flow}
\psi(t,s,x)=x+\int_s^tu(\tau,\psi(\tau,s,x))\mathrm{d}\tau.
\end{equation}
It is well-known that if the vector field
$u$  lies in  $L^1_{\text{loc}}(\RR;\, {\rm Lipschitz})$ then the generalized flow is
uniquely determined and exists globally in time. In addition, the incompressible condition that $\text{div}\, u=0$ guarantees that for every $t,s\in\RR$, $\psi(t,s)$ is a diffeomorphism that preserves Lebesgue measure and
\begin{equation*}
\psi^{-1}(t,s,x)=\psi(s,t,x).
\end{equation*}
Now we begin to show a new form of estimate for the quantity $\big\|\frac{\rho(t)}{r}\big\|_{L^2}$ which different from the quadratic growth estimate \eqref{growth-2} in \cite{A-H-K0}. This is the  cornerstone for  establishing of the quantity  $\|\omega(t)\|_{L^2}$.
\begin{proposition}\label{cor1}
Let $u$ be a smooth axisymmetric vector field  with zero divergence and $\rho$ be a solution of the transport equation
\begin{equation*}
\partial_{t}\rho+u\cdot\nabla \rho=0,\quad
\rho_{| t=0}=\rho_{0}.
\end{equation*}
Assume in addition that
\begin{equation*}
\rho_0\in L^2\cap L^\infty,\quad\mathrm{d}\big(\textnormal{Supp }\rho_0,(Oz)\big):=r_0>0 \quad\hbox{and}\quad
\textnormal{diam}\big(\Pi_z(\textnormal{Supp }\rho_0)\big):=\mathrm{d}_0<\infty.
\end{equation*}
 Then we have
\begin{equation}\label{density-e}
\Big\|\frac{\rho}{r}(t)\Big\|_{L^2}^2
\leq\frac{1}{r_0^2}\|\rho_0\|_{L^2}^2
+2\pi\|\rho_0\|_{L^\infty}^2\int_0^t\Big\|\frac{u_r}{r}(\tau)\Big\|_{L^\infty}\mathrm{d}\tau
\Big(\mathrm{d}_0+2\int_0^t\norm{\nabla_h u(\tau)}^{\frac{1}{2}}_{L^2}\norm{\nabla_h \omega(\tau)}^{\frac{1}{2}}_{L^{2}}\mathrm{d}\tau\Big),
\end{equation}
where $r=(x_1^2+x_2^2)^{\frac12}$.
\end{proposition}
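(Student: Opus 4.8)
The plan is to estimate $\norm{\frac{\rho}{r}(t)}_{L^2}^2=\int_{\RR^3}\frac{\rho^2}{r^2}\intd x$ directly, by splitting the integral according to the \emph{current} distance to the axis and using that $\rho$ is transported by the measure-preserving flow $\psi$ of \eqref{eq.flow}. Since $\partial_t\rho+u\cdot\nabla\rho=0$ with $\textnormal{div}\,u=0$, the flow preserves Lebesgue measure, so $\norm{\rho(t)}_{L^\infty}=\norm{\rho_0}_{L^\infty}$, $\norm{\rho(t)}_{L^2}=\norm{\rho_0}_{L^2}$, and $\textnormal{Supp}\,\rho(t)=\psi(t,0,\textnormal{Supp}\,\rho_0)$. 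Denote by $\mathcal{K}(t)$ the trace of this support in the $(r,z)$ half-plane and recall $\intd x=2\pi r\,\intd r\,\intd z$ for axisymmetric integrands. I would first split $\int_{\RR^3}\frac{\rho^2}{r^2}\intd x=\int_{\{r\ge r_0\}}+\int_{\{r<r_0\}}$.

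On $\{r\ge r_0\}$ one has at once $\int_{\{r\ge r_0\}}\frac{\rho^2}{r^2}\intd x\le\frac{1}{r_0^2}\norm{\rho(t)}_{L^2}^2=\frac{1}{r_0^2}\norm{\rho_0}_{L^2}^2$, which is the first term of \eqref{density-e}. The substance is the region $\{r<r_0\}$ near the axis, where the maximum principle gives
\[
\int_{\{r<r_0\}}\frac{\rho^2}{r^2}\intd x\le 2\pi\norm{\rho_0}_{L^\infty}^2\int\!\!\int_{\mathcal{K}(t)\cap\{r<r_0\}}\frac{\intd r\,\intd z}{r}.
\]
For each fixed $z$ the radial integral is at most $\int_{\underline r(t)}^{r_0}\frac{\intd r}{r}=\log\frac{r_0}{\underline r(t)}$, with $\underline r(t):=\inf_{\mathcal{K}(t)}r$. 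Along any trajectory $\frac{\intd}{\intd t}\log r=\frac{u_r}{r}$, and the initial support lies in $\{r\ge r_0\}$, whence $\log\frac{r_0}{\underline r(t)}\le\int_0^t\norm{\frac{u_r}{r}(\tau)}_{L^\infty}\intd\tau$. Integrating in $z$ over the vertical projection of the support, whose measure is bounded by the vertical extent $V(t):=\textnormal{diam}\big(\Pi_z(\textnormal{Supp}\,\rho(t))\big)$, yields
\[
\int_{\{r<r_0\}}\frac{\rho^2}{r^2}\intd x\le 2\pi\norm{\rho_0}_{L^\infty}^2\,V(t)\int_0^t\norm{\frac{u_r}{r}(\tau)}_{L^\infty}\intd\tau .
\]

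It remains to control the vertical extent by $V(t)\le\mathrm{d}_0+2\int_0^t\norm{\nabla_h u(\tau)}_{L^2}^{1/2}\norm{\nabla_h\omega(\tau)}_{L^2}^{1/2}\intd\tau$, which closes the proof since $V(0)=\mathrm{d}_0$. Because each particle moves vertically with speed $u_z$, tracking the top and bottom of the support gives $V(t)\le\mathrm{d}_0+\int_0^t\textnormal{osc}_{\mathcal{K}(\tau)}(u_z)\,\intd\tau\le\mathrm{d}_0+2\int_0^t\norm{u_z(\tau)}_{L^\infty}\intd\tau$. Thus everything is reduced to the anisotropic endpoint inequality $\norm{u_z}_{L^\infty(\RR^3)}\le C\norm{\nabla_h u}_{L^2}^{1/2}\norm{\nabla_h\omega}_{L^2}^{1/2}$, and this is the step I expect to be the main obstacle.

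This endpoint bound genuinely requires the structure of $u$: it is false for a general field (e.g. $u_z=u_z(z)$ makes the right-hand side vanish), so the divergence-free and axisymmetric character must be exploited. The key is that $\partial_z u_z=-\textnormal{div}_h u_h$ trades vertical frequencies of $u_z$ for horizontal derivatives, while the Biot–Savart law writes $\nabla u$ as a zero-order Calderón–Zygmund operator applied to $\omega$ (cf. Lemma \ref{lem-p}) which commutes with $\nabla_h$, giving $\norm{\nabla_h^2 u}_{L^2}\lesssim\norm{\nabla_h\omega}_{L^2}$. Inserting these into an anisotropic Bernstein/Agmon estimate — interpolating one horizontal derivative against the gained vertical regularity — produces exactly the geometric mean $\norm{\nabla_h u}_{L^2}^{1/2}\norm{\nabla_h\omega}_{L^2}^{1/2}$. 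The delicate point is that the resulting Littlewood–Paley summation is borderline, so closing it needs the micro-local low–high splitting announced in the introduction together with the extra regularity $\partial_z\omega,\nabla_h\partial_z\omega\in L^2$ to gain summability in the vertical frequencies. Once this is in hand, adding the two regions gives \eqref{density-e}.
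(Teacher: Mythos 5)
Your geometric half reproduces the paper's own proof almost exactly: the same splitting into $\{r\ge r_0\}$ and $\{r< r_0\}$, the same use of the conservation of $\|\rho\|_{L^2}$ and $\|\rho\|_{L^\infty}$ under the measure-preserving flow, the same logarithmic radial bound (your trajectory identity $\frac{\intd}{\intd t}\log r=\frac{u_r}{r}$ is precisely the paper's statement that the support stays in $\{r\ge r_0e^{-\int_0^t\|u_r/r\|_{L^\infty}\intd\tau}\}$), and the same control of the vertical extent by $\mathrm{d}_0+2\int_0^t\|u_z(\tau)\|_{L^\infty}\intd\tau$, which the paper imports from \cite[Proposition 3.2]{A-H-K0}. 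Up to the reduction to the inequality $\|u_z\|_{L^\infty}\le C\|\nabla_h u\|_{L^2}^{1/2}\|\nabla_h\omega\|_{L^2}^{1/2}$ you are in full agreement with the paper.

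The genuine gap is that this endpoint inequality — which you yourself single out as ``the main obstacle'' — is never proved, and the route you sketch would not prove the proposition as stated. You propose an anisotropic Littlewood--Paley argument, concede that the vertical summation is borderline, and suggest repairing it with the extra regularity $\partial_z\omega,\,\nabla_h\partial_z\omega\in L^2$. But the right-hand side of \eqref{density-e} is allowed to contain only $\|\nabla_h u\|_{L^2}^{1/2}\|\nabla_h\omega\|_{L^2}^{1/2}$; an estimate that consumes norms of $\partial_z\omega$ proves a different, weaker statement, and in the paper's scheme it would moreover be circular, since control of $\partial_z\omega$ is obtained only later (Proposition \ref{prop-pri}) and rests on the a priori estimates that Proposition \ref{cor1} feeds. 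The paper closes the step with no frequency analysis at all: since $\partial_zu_z=-\mathrm{div}_h u_h$, \emph{every} derivative of the single component $u_z$ carries at least one horizontal derivative of $u$, whence $\|\nabla u_z\|_{L^2}\le C\|\nabla_h u\|_{L^2}$ and $\|\Delta u_z\|_{L^2}\le C\|\nabla\nabla_h u\|_{L^2}$; then the isotropic Agmon--Sobolev inequality in $\RR^3$ gives
\[
\|u_z\|_{L^{\infty}}\leq \|u_{z}\|^{\frac{1}{2}}_{L^{6}}\|\Delta u_{z}\|^{\frac{1}{2}}_{L^{2}}
\leq C\|\nabla u_z\|^{\frac{1}{2}}_{L^2}\|\nabla\nabla_h u\|^{\frac{1}{2}}_{L^{2}}\leq C\|\nabla_h u\|^{\frac{1}{2}}_{L^2}\|\nabla_h \omega\|^{\frac{1}{2}}_{L^{2}},
\]
the last inequality because $\nabla_h$ commutes with the Calder\'on--Zygmund operator of the Biot--Savart law. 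Note also that this argument uses only incompressibility and Calder\'on--Zygmund theory, not the axisymmetric structure you invoke; your ``counterexample'' $u_z=u_z(z)$ only shows that the divergence-free condition is essential.
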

\begin{proof}
We have from the definition that
\begin{align}\label{refine-1}
\Big\|\frac{\rho}{r}(t)\Big\|_{L^2}^2
=&\int_{r\geq r_0}\frac{{\rho^2(t,x)}}{r^2}\mathrm{d}x+\int_{r\le r_0}\frac{{\rho^2(t,x)}}{r^2}\mathrm{d}x\nonumber\\
\leq&
\frac{1}{r_0^2}\|\rho(t)\|_{L^2}^2+\|\rho(t)\|_{L^\infty}^2\int_{\{r\le r_0\}\cap
\textnormal{Supp }\rho(t)}\frac{1}{r^2}\mathrm{d}x\nonumber\\
\leq&\frac{1}{r_0^2}\|\rho_0\|_{L^2}^2+\|\rho_0\|_{L^\infty}^2\int_{\{r\le r_0\}\cap
\textnormal{Supp }\rho(t)}\frac{1}{r^2}\mathrm{d}x.
\end{align}
On the other hand, according to  \cite[Proposition 3.2]{A-H-K0}, we know
\begin{equation*}
\text{diam}(\Pi_{z}\text{Supp}\rho(t))\leq\text{diam}(\Pi_{z}\text{Supp}\rho_0)+2\int_0^t\norm{u_z(\tau)}_{L^\infty}\mathrm{d}\tau.
\end{equation*}
This together with the maximum principle of $\rho$ gives that
\begin{align*}
\int_{\{r\le r_0\}\cap \textnormal{Supp }\rho(t)}\frac{1}{r^2}\mathrm{d}x
&\le2\pi
\Big(\int_{r_0e^{-\int_0^t\|\frac{u_r}{r}(\tau)\|_{L^\infty}\mathrm{d}\tau}
\le r\le r_0}\frac{1}{r}\mathrm{d}r\Big)\Big(\int_{\Pi_z(\textnormal{Supp } \rho(t))}\mathrm{d}z\Big)\\
&\le2\pi\int_0^t\Big\|\frac{u_r}{r}(\tau)\Big\|_{L^\infty}\mathrm{d}\tau\Big(\mathrm{d}_0+2\int_0^t\|u_{z}(\tau)\|_{L^\infty}\mathrm{d}\tau\Big).
\end{align*}
The divergence free condition  guarantees that $\norm{\Delta u_{z}}_{L^{2}}\leq C\norm{\nabla\nabla_h u}_{L^{2}}$ and $\norm{\nabla u_{z}}_{L^{2}}\leq C\norm{\nabla_h u}_{L^{2}}$. Thus, we can deduce by using the interpolation theorem that
\begin{equation*}
\begin{split}
\norm{u_z}_{L^{\infty}}\leq \norm{u_{z}}^{\frac{1}{2}}_{L^{6}}\norm{\Delta u_{z}}^{\frac{1}{2}}_{L^{2}}
\leq C\norm{\nabla u_z}^{\frac{1}{2}}_{L^2}\norm{\nabla\nabla_h u}^{\frac{1}{2}}_{L^{2}}\leq C\norm{\nabla_h u}^{\frac{1}{2}}_{L^2}\norm{\nabla_h \omega}^{\frac{1}{2}}_{L^{2}}.
\end{split}
\end{equation*}
Inserting this inequality in \eqref{refine-1}, we eventually obtain the desired result \eqref{density-e}.
\end{proof}
\section{A priori estimates}\label{Sec-3}
\setcounter{section}{3}\setcounter{equation}{0}
This section is devoted to a priori estimates which can be viewed as a preparation for the proof of  \mbox{Theorem \ref{global}}.
\subsection{Weak a priori estimates}
\begin{proposition}\label{energy}
Let $u_0\in L^2$ be a  vector field  with zero divergence and $\rho_0\in L^2\cap L^\infty.$
Then every smooth solution of \eqref{bs} satisfies
$$
\|\rho(t)\|_{L^p}\leq\|\rho_0\|_{L^p}\quad \text{for}\quad p\in[2,\infty].
$$
$$
\|u(t)\|_{L^2}^2+2\int_0^t\|\nabla_{h} u(\tau)\|_{L^2}^2\mathrm{d}\tau\leq 2\big(\|u_0\|^2_{L^2}+t^2\|\rho_0\|^{2}_{L^2}\big).
$$
\end{proposition}
\begin{proof}
According to \eqref{eq.flow}, one can write
 \begin{equation*}
 \rho(t,x)=\rho_0\big(\psi^{-1}(t,x)\big),
 \end{equation*}
This together with the incompressible condition implies the first estimate.

Next, we take the $L^2$-inner product of  the velocity equation with $u$.
Integrating by parts with respect to space leads to
\begin{equation}\label{eqs1}
\frac12\frac{\mathrm{d}}{\mathrm{d}t}\|u(t)\|_{L^2}^2+\|\nabla_{h} u(t)\|_{L^2}^2\le\|u(t)\|_{L^2}\|\rho(t)\|_{L^2}.
\end{equation}
This means that
$$
\frac{\mathrm{d}}{\mathrm{d}t}\|u(t)\|_{L^2}\le\|\rho(t)\|_{L^2}.
$$
Integrating in time this inequality yields
$$
\|u(t)\|_{L^2}\le\|u_0\|_{L^2}+\int_0^t\|\rho(\tau)\|_{L^2}\mathrm{d}\tau.
$$
Since  $\|\rho(t)\|_{L^2}=\|\rho_0\|_{L^2},$ then
$$
\|u(t)\|_{L^2}\le\|u_0\|_{L^2}+t\|\rho_0\|_{L^2}.
$$
Putting this estimate into \eqref{eqs1} gives
\begin{align*}
\|u(t)\|_{L^2}^2+2\int_0^t\|\nabla_{h} u(\tau)\|_{L^2}^2\mathrm{d}\tau
\leq&\|u_0\|_{L^2}^2+2\int_0^t\big(\|u_0\|_{L^2}+\tau\|\rho_0\|_{L^2}\big)\|\rho_0\|_{L^2}\intd\tau\nonumber\\
\leq&\|u_0\|_{L^2}^2+2\big(\|u_0\|_{L^2}+\frac12t\|\rho_0\|_{L^2}\big)\|\rho_0\|_{L^2}t.
\end{align*}
This gives the second desired estimate.
\end{proof}
Next, we intend to review the behavior of the operator $\frac{\partial_r}{r}\Delta^{-1}$ over axisymmetric functions and the the algebraic relation between $\frac{u_r}{r}$ and $\frac{\omega_\theta}{r}$.
\begin{lemma}[\cite{hrou}]\label{prop1}
Assume that $u$ is an axisymmetric smooth scalar function, then there holds that
\begin{equation}\label{prop1-12}
\Big(\frac{\partial_r}{r}\Big)\Delta^{-1}u(x)=\frac{x_2^2}{r^2}\mathcal{R}_{11}u(x)+\frac{x_1^2}{r^2}\mathcal{R}_{22}u(x)-2\frac{x_1x_2}{r^2}\mathcal{R}_{12}u(x),
\end{equation}
where $\mathcal{R}_{ij}=\partial_{ij}\Delta^{-1}$.
\end{lemma}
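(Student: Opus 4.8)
The plan is to reduce the identity \eqref{prop1-12} to a direct computation via the chain rule, after first isolating the one structural fact that makes everything work: the function $v:=\Delta^{-1}u$ is itself axisymmetric. Indeed, the Laplacian commutes with rotations about the axis $(Oz)$, hence so does $\Delta^{-1}$, and therefore $v=v(r,z)$ depends only on $r$ and $z$. This allows me to express every Cartesian derivative of $v$ through the single radial variable $r$, using the elementary relations $\partial_{x_1}r=x_1/r$, $\partial_{x_2}r=x_2/r$, together with $\partial_{x_1}(x_1/r)=x_2^2/r^3$ and $\partial_{x_2}(x_1/r)=-x_1x_2/r^3$.

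Next I would compute the three second-order Cartesian derivatives that appear on the right-hand side. Since $\mathcal{R}_{ij}u=\partial_{ij}\Delta^{-1}u=\partial_{ij}v$, a routine application of the chain rule to the axisymmetric function $v$ gives
\begin{equation*}
\mathcal{R}_{11}u=\frac{x_2^2}{r^3}\partial_r v+\frac{x_1^2}{r^2}\partial_{rr}v,\quad
\mathcal{R}_{22}u=\frac{x_1^2}{r^3}\partial_r v+\frac{x_2^2}{r^2}\partial_{rr}v,\quad
\mathcal{R}_{12}u=-\frac{x_1x_2}{r^3}\partial_r v+\frac{x_1x_2}{r^2}\partial_{rr}v.
\end{equation*}
The whole statement then hinges on substituting these expressions into the combination $\frac{x_2^2}{r^2}\mathcal{R}_{11}u+\frac{x_1^2}{r^2}\mathcal{R}_{22}u-2\frac{x_1x_2}{r^2}\mathcal{R}_{12}u$ and checking an algebraic cancellation.

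Carrying out the substitution, the main point is that the coefficient of $\partial_{rr}v$ collapses, namely $\tfrac{x_1^2x_2^2+x_1^2x_2^2-2x_1^2x_2^2}{r^4}=0$, while the coefficient of $\partial_r v$ simplifies, via $x_1^2+x_2^2=r^2$, to $\tfrac{x_1^4+x_2^4+2x_1^2x_2^2}{r^5}=\tfrac{(x_1^2+x_2^2)^2}{r^5}=\tfrac1r$. Hence the right-hand side reduces exactly to $\tfrac1r\partial_r v=\big(\tfrac{\partial_r}{r}\big)\Delta^{-1}u$, which is the claim. I do not expect any genuine obstacle beyond this bookkeeping; the only delicate points to keep in mind are the preservation of axisymmetry under $\Delta^{-1}$ (which justifies writing $v=v(r,z)$) and the removability of the apparent $1/r$ singularities, both of which are guaranteed by the smoothness and axisymmetry of $u$.
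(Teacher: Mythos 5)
Your computation is correct: axisymmetry of $\Delta^{-1}u$ (since $\Delta^{-1}$ commutes with rotations about $(Oz)$), the chain-rule formulas for $\mathcal{R}_{11}u$, $\mathcal{R}_{22}u$, $\mathcal{R}_{12}u$, and the cancellation of the $\partial_{rr}v$ terms all check out, yielding $\frac1r\partial_r v$ exactly as claimed. The paper itself gives no proof of this lemma (it is quoted from \cite{hrou}), and your argument is precisely the standard one used in that reference, so there is nothing further to reconcile.
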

\begin{lemma}\label{prop-identity}
Assume that $u$ be an axisymmetric vector-field without swirl  satisfying $\mathrm{div}\, u=0$ and $\omega=\text{\rm curl}\,u$. Then
\begin{equation}\label{identity}
\frac{u_{r}}{r}=\partial_{z}\Delta^{-1}\left(\frac{\omega_\theta}{r}\right)-2\frac{\partial_r}{r}\Delta^{-1}\partial_z\Delta^{-1}\left(\frac{\omega_\theta}{r}\right)
\end{equation}
Furthermore, one has
\begin{equation}\label{point-ansitro}
\Big|\frac{u_{r}}{r}\Big|\leq \Big|\partial_{z}\Delta^{-1}\Big(\frac{\omega_\theta}{r}\Big)\Big|+\sum^{2}_{i,j=1}\Big|\mathcal{R}_{ij}\partial_{z}\Delta^{-1}\Big(\frac{\omega_\theta}{r}\Big)\Big|,
\end{equation}
and
\begin{equation}\label{eq-ds}
\Big\|\partial^2_z\Big(\frac{u_r}{r}\Big)\Big\|_{L^p}\leq C\Big\|\partial_z\Big(\frac{\omega_\theta}{r}\Big)\Big\|_{L^p},\quad\text{for}\quad p\in]1,\infty[.
\end{equation}
\end{lemma}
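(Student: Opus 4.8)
The plan is to derive the algebraic identity \eqref{identity} from the Biot--Savart law, and then read off the two consequences \eqref{point-ansitro} and \eqref{eq-ds} using Lemma \ref{prop1} together with the $L^p$-continuity of homogeneous degree-zero Fourier multipliers. Write $\zeta:=\frac{\omega_\theta}{r}$ and note that, in Cartesian coordinates, the purely azimuthal vorticity reads $\omega=\omega_\theta e_\theta=\zeta(-x_2,x_1,0)$. Since $u$ is divergence free and decaying, the Biot--Savart law gives $u=-\mathrm{curl}\,\Delta^{-1}\omega$, where $\Delta^{-1}$ now acts componentwise. Computing the first two components yields $u_1=\partial_z\Delta^{-1}(x_1\zeta)$ and $u_2=\partial_z\Delta^{-1}(x_2\zeta)$, so that, using $u_r=\frac{x_1}{r}u_1+\frac{x_2}{r}u_2$,
$$\frac{u_r}{r}=\frac{1}{r^2}\big(x_1u_1+x_2u_2\big)=\frac{\partial_z}{r^2}\Big(x_1\Delta^{-1}(x_1\zeta)+x_2\Delta^{-1}(x_2\zeta)\Big).$$

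First I would establish the commutator identity $\Delta^{-1}(x_jf)=x_j\Delta^{-1}f-2\partial_j\Delta^{-1}\Delta^{-1}f$, which follows from $\Delta(x_jg)=x_j\Delta g+2\partial_jg$ applied to $g=\Delta^{-1}f$. Multiplying by $x_j$, summing over $j=1,2$, and invoking $x_1^2+x_2^2=r^2$ together with $x_1\partial_1+x_2\partial_2=r\partial_r$, I obtain
$$x_1\Delta^{-1}(x_1\zeta)+x_2\Delta^{-1}(x_2\zeta)=r^2\Delta^{-1}\zeta-2r\,\partial_r\Delta^{-1}\Delta^{-1}\zeta.$$
Substituting this into the previous display and distributing $\frac{\partial_z}{r^2}$ (noting that $\partial_z$ commutes with both $r$ and $\partial_r$) gives exactly \eqref{identity}. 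At this stage I should check that each step is legitimate on the smooth, sufficiently decaying fields at hand, so that the formal commutator computation is justified; this bookkeeping is the only delicate point, and I expect it to be the main obstacle.

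Once \eqref{identity} is in hand, the remaining assertions are essentially mechanical. For \eqref{point-ansitro} I would apply Lemma \ref{prop1} to the axisymmetric function $g:=\partial_z\Delta^{-1}\zeta$, which writes $\frac{\partial_r}{r}\Delta^{-1}g$ as a combination of $\mathcal{R}_{11}g,\mathcal{R}_{22}g,\mathcal{R}_{12}g$ with coefficients $\frac{x_2^2}{r^2},\frac{x_1^2}{r^2},\frac{x_1x_2}{r^2}$, each bounded by $1$ in modulus; taking absolute values in \eqref{identity} and applying the triangle inequality then produces \eqref{point-ansitro} (up to the harmless absolute constant coming from the factor $2$).

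For \eqref{eq-ds} I would apply $\partial_z^2$ to \eqref{identity} and move both $z$-derivatives onto $\zeta$. Since $\partial_z$ commutes with the horizontal multipliers $x_ix_j/r^2$ and with each $\mathcal{R}_{ij}$, every term becomes a coefficient bounded by $1$ times an operator of the form $\mathcal{R}_{ij}\mathcal{R}_{33}$ (with $\mathcal{R}_{33}=\partial_z^2\Delta^{-1}$) applied to $\partial_z\zeta$; the leading term similarly reduces to $\mathcal{R}_{33}\partial_z\zeta$. These are compositions of homogeneous degree-zero multipliers, hence bounded on $L^p$ for $p\in\,]1,\infty[$ by Calderón--Zygmund theory. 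Taking $L^p$ norms and discarding the bounded coefficients then yields
$$\Big\|\partial_z^2\Big(\frac{u_r}{r}\Big)\Big\|_{L^p}\leq C\|\partial_z\zeta\|_{L^p}=C\Big\|\partial_z\Big(\frac{\omega_\theta}{r}\Big)\Big\|_{L^p},$$
which is \eqref{eq-ds}. In short, all the real work sits in the clean derivation of \eqref{identity}; thereafter Lemma \ref{prop1} and the $L^p$ boundedness of the Riesz-type operators make \eqref{point-ansitro} and \eqref{eq-ds} routine.
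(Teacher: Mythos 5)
Your proposal is correct, and for the one part the paper actually proves inline it coincides with the paper's argument: to get \eqref{eq-ds}, both you and the authors apply $\partial_z^2$ to \eqref{identity}, commute the $z$-derivatives through, and then invoke Lemma \ref{prop1} together with the $L^p$-boundedness of the resulting compositions of Riesz operators $\mathcal{R}_{ij}$, $\mathcal{R}_{33}=\partial_z^2\Delta^{-1}$. Where you genuinely differ is on \eqref{identity} and \eqref{point-ansitro}: the paper does not prove these at all but simply cites \cite{MZ2012}, whereas you reconstruct the identity from the Biot--Savart law $u=-\mathrm{curl}\,\Delta^{-1}\omega$, the Cartesian expression $\omega=\frac{\omega_\theta}{r}(-x_2,x_1,0)$, and the commutator identity $\Delta^{-1}(x_jf)=x_j\Delta^{-1}f-2\partial_j\Delta^{-1}\Delta^{-1}f$; this computation is correct (and is in substance the argument of the cited reference), so your write-up is more self-contained than the paper's. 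Two minor caveats, both of which you flag honestly: first, the formal manipulations with $\Delta^{-1}$ (in particular using $\Delta^{-1}\Delta=\mathrm{Id}$ on $x_j\Delta^{-1}f$) need the smoothness and decay hypotheses to be invoked explicitly; second, the termwise triangle inequality applied to Lemma \ref{prop1} with the prefactor $2$ produces coefficients $2x_i^2/r^2\le 2$ on the diagonal terms, so what you actually obtain is \eqref{point-ansitro} with an absolute constant (say $2$) in front of the Riesz sum rather than constant $1$. That discrepancy is inherited from the statement itself---with constant $1$ the inequality does not follow from the naive termwise estimate---and it is harmless everywhere the lemma is used, since only $L^p$ and $L^\infty$ bounds are ever extracted from it; still, you should state \eqref{point-ansitro} with an explicit constant $C$ to be safe.
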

\begin{proof}
One can refer to \cite{MZ2012} for the proof of \eqref{identity} and \eqref{point-ansitro}. Next, we turn to show \eqref{eq-ds}. According to \eqref{identity}, we obtain
\begin{equation}
\partial_z^2\frac{u_{r}}{r}=\partial_{z}\Delta^{-1}\partial_z^2\left(\frac{\omega_\theta}{r}\right)
-2\frac{\partial_r}{r}\Delta^{-1}\partial_z\Delta^{-1}\partial_z^2\left(\frac{\omega_\theta}{r}\right).
\end{equation}
Using  Lemma \ref{prop1-12} and applying the $L^p$-boundedness of Riesz operator, we eventually obtain \eqref{eq-ds}.
\end{proof}
The next proposition describes some  estimates linking  the velocity to the
vorticity in virtue of the Biot-Savart law and Lemma \ref{prop-identity}.
\begin{proposition}[\cite{MZ2012}]\label{biot-s}
Assume that $u$ is an axisymmetric vector-field without swirl  satisfying $\mathrm{div}\, u=0$ and vorticity
$\omega=\omega_\theta e_\theta$. Then
\begin{equation}\label{bs-1}
\|u\|_{L^\infty(\mathbb{R}^3)}\le C\|\omega_\theta\|_{L^{2}(\mathbb{R}^3)}^{\frac12}\|\nabla_h\omega_\theta\|_{ L^2(\mathbb{R}^3)}^{\frac12}.
\end{equation}
and
\begin{equation}\label{bs-2}
\Big\|\frac{u_r}{ r}\Big\|_{L^\infty(\mathbb{R}^3)}\le
 C\Big\|\frac{\omega_\theta}{r}\Big\|_{L^{2}(\mathbb{R}^3)}^{\frac12}\Big\|\nabla_h\Big(\frac{\omega_\theta}{r}\Big)\Big\|_{L^{2}(\mathbb{R}^3)}^{\frac12}.
\end{equation}
\end{proposition}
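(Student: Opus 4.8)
The plan is to deduce both \eqref{bs-1} and \eqref{bs-2} from a single anisotropic multiplier principle: \emph{any Fourier multiplier $T$ of order $-1$ maps the ``horizontal half-derivative'' space into $L^\infty$, with the scale-invariant bound}
$$\|Tf\|_{L^\infty(\RR^3)}\le C\|f\|_{L^2(\RR^3)}^{\frac12}\|\nabla_h f\|_{L^2(\RR^3)}^{\frac12}.$$
Granting this, \eqref{bs-2} is immediate from the pointwise identity \eqref{point-ansitro}: each of the operators $\partial_z\Delta^{-1}$ and $\mathcal{R}_{ij}\partial_z\Delta^{-1}$ appearing there is of order $-1$, so applying the principle with $f=\frac{\omega_\theta}{r}$ and summing the finitely many terms yields \eqref{bs-2}. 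For \eqref{bs-1} I would first reduce $\|u\|_{L^\infty}\le\|u_r\|_{L^\infty}+\|u_z\|_{L^\infty}$ and represent the two scalar components through the Biot--Savart law, writing each as an order $-1$ operator acting on $\omega_\theta$, then apply the same principle with $f=\omega_\theta$.

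To prove the principle I would use the anisotropic Littlewood--Paley decomposition $Tf=\sum_{j,k\ge-1}\Delta_j^h\Delta_k^v Tf$ together with the anisotropic Bernstein inequality
$$\|\Delta_j^h\Delta_k^v g\|_{L^\infty(\RR^3)}\le C\,2^{j}2^{\frac k2}\|\Delta_j^h\Delta_k^v g\|_{L^2(\RR^3)},$$
the factor $2^{j}$ coming from the two horizontal directions and $2^{k/2}$ from the vertical one (this follows from Lemma \ref{lb} applied in each block of variables). Since $T$ has order $-1$, its symbol is $O(2^{-\max(j,k)})$ on the joint dyadic block, so $\|\Delta_j^h\Delta_k^v Tf\|_{L^2}\le C2^{-\max(j,k)}\|\Delta_j^h\Delta_k^v f\|_{L^2}$. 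Combining the two facts, the working factor is $2^{k/2}$ for $k\le j$ and $2^{j-k/2}$ for $k>j$; both are summable geometric series in $k$, and after a Cauchy--Schwarz in $k$ they collapse, for each fixed $j$, to $C2^{j/2}\|\Delta_j^h f\|_{L^2}$. One is thus left with $\|Tf\|_{L^\infty}\le C\sum_{j\ge-1}2^{\frac j2}\|\Delta_j^h f\|_{L^2}$.

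The final step is the classical low/high splitting in the horizontal index: fixing $N$,
$$\sum_{j\le N}2^{\frac j2}\|\Delta_j^h f\|_{L^2}\le C2^{\frac N2}\|f\|_{L^2},\qquad \sum_{j> N}2^{\frac j2}\|\Delta_j^h f\|_{L^2}\le C2^{-\frac N2}\|\nabla_h f\|_{L^2},$$
and optimizing $2^N\sim\|\nabla_h f\|_{L^2}/\|f\|_{L^2}$ produces exactly $\|f\|_{L^2}^{1/2}\|\nabla_h f\|_{L^2}^{1/2}$. This establishes the principle, hence \eqref{bs-2}.

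I expect the genuine difficulty to lie in \eqref{bs-1} rather than in the multiplier principle. If one naively used the vector law $u=\nabla\times(-\Delta)^{-1}\omega$ with $\omega=\omega_\theta e_\theta$ and applied the principle to the vector $\omega$, one would meet $\|\nabla_h\omega\|_{L^2}$ rather than $\|\nabla_h\omega_\theta\|_{L^2}$; and a direct computation in cylindrical coordinates gives the pointwise identity $|\nabla_h(\omega_\theta e_\theta)|^2=|\partial_r\omega_\theta|^2+\frac{\omega_\theta^2}{r^2}$, whence $\|\nabla_h(\omega_\theta e_\theta)\|_{L^2}^2=\|\nabla_h\omega_\theta\|_{L^2}^2+\|\frac{\omega_\theta}{r}\|_{L^2}^2$, so the forbidden Hardy-type term $\|\frac{\omega_\theta}{r}\|_{L^2}$ appears. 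This term \emph{cannot} be absorbed into $\|\nabla_h\omega_\theta\|_{L^2}$, because the two-dimensional Hardy inequality fails at the critical weight. The remedy, and the delicate point of the argument, is to keep the \emph{scalar} structure: using the stream function (equivalently, the reduction of $(-\Delta+r^{-2})^{-1}$ to the five-dimensional radial Laplacian acting on $\frac{\omega_\theta}{r}$) in the spirit of Lemma \ref{prop-identity} and \eqref{identity}, one represents $u_r$ and $u_z$ as order $-1$ operators acting on $\omega_\theta$ that are compatible with the anisotropic Bernstein bookkeeping, so that only horizontal derivatives of the scalar $\omega_\theta$ enter. Verifying that these axisymmetric, non--constant--coefficient operators obey the same order $-1$ frequency bounds, and therefore feed into the principle without regenerating the $\frac{\omega_\theta}{r}$ term, is the main obstacle.
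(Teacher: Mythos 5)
You should first note that the paper itself contains no proof of Proposition \ref{biot-s}: it is quoted verbatim from \cite{MZ2012}, so your attempt can only be judged on its own terms. Your anisotropic multiplier principle is in fact true, and it does give \eqref{bs-2} exactly as you say, since the operators $\partial_z\Delta^{-1}$ and $\mathcal{R}_{ij}\partial_z\Delta^{-1}$ in \eqref{point-ansitro} have symbols bounded by $|\xi|^{-1}$. However, your Littlewood--Paley proof of the principle has a real defect at low frequencies: on the block $j=k=-1$ the claimed bound $\|\Delta_{-1}^h\Delta_{-1}^v Tf\|_{L^2}\le C\,\|\Delta_{-1}^h\Delta_{-1}^v f\|_{L^2}$ is false, because the symbol $|\xi|^{-1}$ is unbounded on $\{|\xi_h|\le \tfrac43,\ |\xi_3|\le\tfrac43\}$. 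This block cannot be waved away: your final optimization requires $2^N\sim\|\nabla_h f\|_{L^2}/\|f\|_{L^2}$, which lies below the lowest dyadic scale precisely when $\|\nabla_h f\|_{L^2}\ll\|f\|_{L^2}$ (e.g.\ $\hat f$ supported near the origin), and in that regime an inhomogeneous decomposition can only produce $C\|f\|_{L^2}$, not the scale-invariant product. The repair is short and worth writing down: estimate on the Fourier side, $\|Tf\|_{L^\infty}\le C\int |\xi|^{-1}|\hat f(\xi)|\,\mathrm{d}\xi$, split at the \emph{continuous} threshold $|\xi_h|=N$, and use Cauchy--Schwarz together with $\int_{\mathbb{R}}(|\xi_h|^2+\xi_3^2)^{-1}\mathrm{d}\xi_3=\pi/|\xi_h|$ (integrable in two dimensions) to get $C\bigl(N^{1/2}\|f\|_{L^2}+N^{-1/2}\|\nabla_h f\|_{L^2}\bigr)$; optimizing in $N>0$ gives the principle, low frequencies included. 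Equivalently, run your dyadic argument with a homogeneous decomposition over $j,k\in\mathbb{Z}$.

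The more serious gap is that \eqref{bs-1} is never established: you end by naming the verification of the order $-1$ bounds for your axisymmetric, non-constant-coefficient operators ``the main obstacle,'' which is an admission that this half of the proposition remains unproven. Moreover, the obstacle is largely self-inflicted, caused by insisting on reading $\nabla_h\omega_\theta$ as the gradient of the scalar $\omega_\theta$. The paper's own usage points the other way: in the proof of Proposition \ref{prop-cruc} it invokes the identity $\|\nabla_h\omega_\theta\|_{L^2}^2=\|\partial_r\omega_\theta\|_{L^2}^2+\|\omega_\theta/r\|_{L^2}^2$, i.e.\ it treats $\nabla_h\omega_\theta$ as the horizontal gradient of the \emph{vector} $\omega=\omega_\theta e_\theta$, and in every application both pieces are controlled by the dissipation, so nothing is lost by the Hardy term. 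Under that reading, the ``naive'' route you discarded is precisely the proof: write $u=\mathrm{curl}\,(-\Delta)^{-1}\omega$, observe that each Cartesian component of $u$ is an order $-1$ Fourier multiplier applied to $\omega_1=-\omega_\theta x_2/r$ or $\omega_2=\omega_\theta x_1/r$, note $\|\omega_i\|_{L^2}\le\|\omega_\theta\|_{L^2}$ and $\|\nabla_h\omega_i\|_{L^2}\le\|\nabla_h\omega\|_{L^2}$, and apply your principle componentwise. Your observation that the critical two-dimensional Hardy inequality fails is correct, but it only shows that the strictly scalar strengthening (with $\|\partial_r\omega_\theta\|_{L^2}$ alone on the right) would require a genuinely different argument; it is not what the proposition, as used in this paper, demands. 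In summary: \eqref{bs-2} is proven modulo the low-frequency repair above, while \eqref{bs-1} is left unproven in your write-up, even though it follows from your own principle once the vector interpretation is adopted.
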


\subsection{Strong a priori estimates}
In this subsection, our task is to obtain the  global Lipschitz  estimates of the vector field. Now, let us begin with the estimate of $\|u(t)\|_{H^1}$.

\begin{proposition}\label{prop-cruc}
Let $u_0\in H^1$  be an axisymmetric vector field with zero
divergence such that $\frac{{\omega_0}}{r}\in L^2.$ Let
$\rho_0\in L^2\cap L^\infty$ depending only on $(r,z)$ such that
$\hbox{\rm Supp }\rho_0$ does not intersect the axis $(Oz)$ and
$\Pi_z(\hbox{\rm Supp }\rho_0)$ is a compact set. Then every smooth
solution $(u,\rho)$ of the system \eqref{bs} satisfies for every
$t\geq 0$,
\begin{equation*}
\|\omega_\theta(t)\|_{L^2}^2+\int_0^t\Big(\|\nabla_h\omega_\theta(\tau)\|_{H^1}^2\mathrm{d}\tau+\big\|{\omega_\theta\over r}(\tau)\big\|_{L^2}^2\Big)\intd\tau+\Big\|\frac{\omega_\theta}{r}(t)\Big\|_{L^2}^2+\int_0^t\Big\|\nabla_h\Big(\frac{\omega_\theta}{r}\Big)(\tau)\Big\|^2_{L^{2}}\intd\tau\le Ce^{\exp{Ct^{17}}},
\end{equation*}
where the constant $C$ depends on the initial data.
\end{proposition}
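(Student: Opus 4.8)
The plan is to close a coupled pair of horizontal energy estimates, one for $\omega_\theta$ and one for $\frac{\omega_\theta}{r}$, with the coupling broken by the refined growth estimate of Proposition \ref{cor1}. First I would test the vorticity equation \eqref{tourbillon} against $\omega_\theta$ in $L^2$; integrating by parts and using $\int\frac{\omega_\theta}{r^2}\omega_\theta\,\mathrm{d}x=\|\frac{\omega_\theta}{r}\|_{L^2}^2$ and $-\int\partial_r\rho\,\omega_\theta\,\mathrm{d}x=\int\rho\,\partial_r\omega_\theta\,\mathrm{d}x+\int\rho\frac{\omega_\theta}{r}\,\mathrm{d}x$ yields
\[
\frac12\frac{\mathrm{d}}{\mathrm{d}t}\|\omega_\theta\|_{L^2}^2+\|\nabla_h\omega_\theta\|_{L^2}^2+\Big\|\frac{\omega_\theta}{r}\Big\|_{L^2}^2=\int_{\RR^3}\frac{u_r}{r}\omega_\theta^2\,\mathrm{d}x+\int_{\RR^3}\rho\frac{\omega_\theta}{r}\,\mathrm{d}x+\int_{\RR^3}\rho\,\partial_r\omega_\theta\,\mathrm{d}x .
\]
The two density integrals are controlled by the maximum principle $\|\rho\|_{L^2}\le\|\rho_0\|_{L^2}$ (Proposition \ref{energy}) and Young's inequality, each absorbing a fraction of the dissipation $\|\nabla_h\omega_\theta\|_{L^2}^2+\|\frac{\omega_\theta}{r}\|_{L^2}^2$; the dangerous stretching term I would bound, via the anisotropic inequality (Lemma \ref{lema.2}) and the $L^2$-energy bound, by $C\|u\|_{L^2}\|\omega_\theta\|_{L^2}^2+\|\frac{\omega_\theta}{r}\|_{L^2}^2\|\nabla_h(\frac{\omega_\theta}{r})\|_{L^2}^2+\frac14\|\nabla_h\omega_\theta\|_{L^2}^2$. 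After integrating in time, the single obstruction that remains is the quartic coupling $\int_0^t\|\frac{\omega_\theta}{r}\|_{L^2}^2\|\nabla_h(\frac{\omega_\theta}{r})\|_{L^2}^2\,\mathrm{d}\tau$.

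Second, I would test the equation $(\partial_t+u\cdot\nabla)\frac{\omega_\theta}{r}-(\Delta_h+\frac2r\partial_r)\frac{\omega_\theta}{r}=-\frac{\partial_r\rho}{r}$ against $\frac{\omega_\theta}{r}$. The operator $-(\Delta_h+\frac2r\partial_r)$ is dissipative in the measure $r\,\mathrm{d}r\,\mathrm{d}z$ (the extra boundary terms at $r=0$ being favorable), producing $\|\nabla_h(\frac{\omega_\theta}{r})\|_{L^2}^2$, while the forcing integrates by parts to $\int\frac\rho r\,\partial_r(\frac{\omega_\theta}{r})\,\mathrm{d}x\le\|\frac\rho r\|_{L^2}\|\nabla_h(\frac{\omega_\theta}{r})\|_{L^2}$; Young absorbs half the dissipation and leaves $\int_0^t\|\frac\rho r\|_{L^2}^2\,\mathrm{d}\tau$. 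Here is the crucial step: rather than the quadratic growth estimate \eqref{growth-2}, which would reinsert the full, unabsorbable $\int_0^t\|\nabla_h\omega\|_{L^2}^2$, I would feed in the refined bound \eqref{density-e} of Proposition \ref{cor1}, estimating $\|\frac{u_r}{r}\|_{L^\infty}$ by \eqref{bs-2} and applying Cauchy--Schwarz in time together with the energy control of $\int_0^t\|\nabla_h u\|_{L^2}^2$. This produces
\[
\Big\|\frac{\omega_\theta}{r}(t)\Big\|_{L^2}^2+\frac12\int_0^t\Big\|\nabla_h\Big(\frac{\omega_\theta}{r}\Big)\Big\|_{L^2}^2\,\mathrm{d}\tau\le C_0(1+t^5)\Big(1+\int_0^t\Big\|\frac{\omega_\theta}{r}\Big\|_{L^2}^2\,\mathrm{d}\tau\Big)+\frac14\Big(\int_0^t\|\nabla_h\omega\|_{L^2}^2\,\mathrm{d}\tau\Big)^{\frac12},
\]
where the square root on the last term is precisely what makes the scheme work.

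Third, to close, I would square the displayed $\frac{\omega_\theta}{r}$-estimate and dominate the quartic coupling by $\int_0^t\|\frac{\omega_\theta}{r}\|_{L^2}^2\|\nabla_h(\frac{\omega_\theta}{r})\|_{L^2}^2\le\frac12\big(\sup_{[0,t]}\|\frac{\omega_\theta}{r}\|_{L^2}^2\big)^2+\frac12\big(\int_0^t\|\nabla_h(\frac{\omega_\theta}{r})\|_{L^2}^2\big)^2$; the squared estimate controls both $\|\frac{\omega_\theta}{r}\|_{L^2}^4$ and $(\int_0^t\|\nabla_h(\frac{\omega_\theta}{r})\|_{L^2}^2)^2$ by a polynomial in $t$ times $(1+\int_0^t\|\frac{\omega_\theta}{r}\|_{L^2}^2)^2$ plus a small multiple of $\int_0^t\|\nabla_h\omega\|_{L^2}^2$, the latter being absorbed into the left-hand side of the $\omega_\theta$-estimate. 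This is exactly why the natural quantity to propagate carries the fourth power $\|\frac{\omega_\theta}{r}(t)\|_{L^2}^4$ and the square $(\int_0^t\|\nabla_h(\frac{\omega_\theta}{r})\|_{L^2}^2)^2$. Assembling the two estimates gives a closed nonlinear integral inequality for
\[
\mathcal E(t):=\|\omega_\theta(t)\|_{L^2}^2+\int_0^t\|\nabla_h\omega_\theta\|_{L^2}^2\,\mathrm{d}\tau+\Big\|\frac{\omega_\theta}{r}(t)\Big\|_{L^2}^4+\Big(\int_0^t\Big\|\nabla_h\Big(\frac{\omega_\theta}{r}\Big)\Big\|_{L^2}^2\,\mathrm{d}\tau\Big)^2 ;
\]
the self-referential dependence of the right-hand side on $\int_0^t\|\frac{u_r}{r}\|_{L^\infty}\,\mathrm{d}\tau$ (itself governed by $\mathcal E$ through \eqref{bs-2} and Proposition \ref{cor1}) forces a two-stage Gr\"onwall argument which, after bookkeeping the accumulated powers of $t$, yields the double exponential $Ce^{\exp(Ct^{17})}$.

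The principal obstacle is precisely the quartic term $\int_0^t\|\frac{\omega_\theta}{r}\|_{L^2}^2\|\nabla_h(\frac{\omega_\theta}{r})\|_{L^2}^2$: with the classical estimate \eqref{growth-2} it cannot be absorbed, since one recovers the full dissipation $\int_0^t\|\nabla_h\omega\|_{L^2}^2$ with no smallness, and the entire argument rests on replacing it by its square root through Proposition \ref{cor1}, at the cost of propagating fourth powers. Finally, to upgrade $\int_0^t\|\nabla_h\omega_\theta\|_{L^2}^2$ to the $H^1$-norm appearing in the statement, I would differentiate \eqref{tourbillon} in $z$, run a further horizontal energy estimate for $\partial_z\omega_\theta$, invoke \eqref{eq-ds} to dominate $\partial_z^2(\frac{u_r}{r})$ by $\partial_z(\frac{\omega_\theta}{r})$, and close once more by Gr\"onwall using the bounds already established.
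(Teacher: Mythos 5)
Your main argument is, up to constants and powers of $t$, exactly the paper's proof: the same $L^2$ energy identity for $\omega_\theta$ with the stretching term bounded via the anisotropic Lemma \ref{lema.2}, the same energy estimate for $\Gamma=\frac{\omega_\theta}{r}$ exploiting the favorable sign of the $\frac{2}{r}\partial_r$ term, the same substitution of the refined density bound \eqref{density-e} of Proposition \ref{cor1} (combined with \eqref{bs-2}, Cauchy--Schwarz in time and Proposition \ref{energy}) to produce the crucial square root of $\int_0^t\|\nabla_h\omega\|_{L^2}^2\,\mathrm{d}\tau$, and the same device of squaring the $\Gamma$-estimate so that the quartic coupling $\int_0^t\|\frac{\omega_\theta}{r}\|_{L^2}^2\|\nabla_h(\frac{\omega_\theta}{r})\|_{L^2}^2\,\mathrm{d}\tau$ is dominated by the quantities you propagate (the fourth power $\|\frac{\omega_\theta}{r}(t)\|_{L^2}^4$ and the squared dissipation integral) plus a small, absorbable multiple of $\int_0^t\|\nabla_h\omega\|_{L^2}^2\,\mathrm{d}\tau$; Gronwall then yields the double exponential. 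This part of your proposal is correct and coincides with the paper's proof.

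The only divergence is your final paragraph. Note first that the paper's own proof never establishes the $H^1$ norm written in the statement: its concluding display contains only $\int_0^t\|\nabla_h\omega_\theta\|_{L^2}^2\,\mathrm{d}\tau$, and since the hypotheses of the proposition do not include $\partial_z\omega_0\in L^2$ (that assumption appears only in Theorem \ref{global}), the $H^1$ there should be read as a typo for $L^2$. More importantly, your proposed repair --- a $\partial_z$ energy estimate closed ``once more by Gronwall using the bounds already established'' --- would fail. Testing $\partial_z$ of \eqref{tourbillon} against $\partial_z\omega_\theta$ produces the term $-\int\partial_z\partial_r\rho\,\partial_z\omega_\theta\,\mathrm{d}x$, which after integration by parts demands control of $\|\partial_z\rho\|_{L^2}$. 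Under the present hypotheses $\rho_0$ lies only in $L^2\cap L^\infty$, and even assuming $\rho_0\in H^1$, propagating $\|\nabla\rho(t)\|_{L^2}$ costs a factor $e^{C\int_0^t\|\nabla u(\tau)\|_{L^\infty}\mathrm{d}\tau}$ --- precisely the quantity whose control is the object of the remainder of the paper (Propositions \ref{log} and \ref{prop-fine}, the logarithmic inequality of Lemma \ref{log}, and Proposition \ref{prop-pri}), and which there yields a quadruple, not double, exponential bound. So that upgrade is not a Gronwall afterthought available at this stage; it is the hard part of the paper, and your sketch of it conceals a genuine circularity.
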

\begin{proof}
Taking the $L^2$-inner product of  the equation \eqref{tourbillon} with $\omega_\theta$ we get
\begin{align}\label{eq.wdu}
\frac12\frac{\mathrm{d}}{\mathrm{d}t}\|\omega_\theta(t)\|_{L^2}^2+\|\nabla_{h}\omega_\theta(t)\|_{L^2}^2
+\Big\|{\omega_\theta\over r}(t)\Big\|_{L^2}^2
&=\int_{\mathbb{R}^3} u_r{\omega_\theta\over r}\omega_\theta \mathrm{d}x
-\int_{\mathbb{R}^3}\partial_r\rho\,\omega_\theta \mathrm{d}x.
\end{align}
For the first integral term in the right side of \eqref{eq.wdu}, by using Lemma \ref{lema.2} and the Young inequality, we get
\begin{equation*}
\begin{split}
\int_{\mathbb{R}^3} u_r{\omega_\theta\over r}\omega_\theta \mathrm{d}x\le&\|u_{r}\|^{\frac{1}{2}}_{L^2}\|\partial_zu_{r}\|^{\frac{1}{2}}_{L^2}\norm{\omega_\theta\over r}^{\frac{1}{2}}_{L^2}\norm{\nabla_h\Big(\frac{\omega_\theta}{ r}\Big)}^{\frac{1}{2}}_{L^2}
\|\omega_\theta\|^{\frac{1}{2}}_{L^2}\|\nabla_h\omega_\theta\|^{\frac{1}{2}}_{L^2}\\
\le&\frac12\|u\|_{L^2}\|\omega_{\theta}\|^{2}_{L^2}+\frac14\norm{\omega_\theta\over r}^{2}_{L^2}\norm{\nabla_h\Big(\frac{\omega_\theta}{ r}\Big)}^{2}_{L^2}
+\frac{1}{4}\|\nabla_h\omega_\theta\|^{2}_{L^2}.
\end{split}
\end{equation*}
For the second integral term in the right side of \eqref{eq.wdu}, by the H\"older inequality, we obtain
\begin{equation*}
\begin{split}
-\int_{\mathbb{R}^3}\partial_r\rho\,\omega_\theta \intd x\le&\|\rho\|_{L^2}\Big(\Big\|{\omega_\theta\over r}\Big\|_{L^2}
+\|\partial_r\omega_\theta\|_{L^2}\Big)\\
\leq&\|\rho_{0}\|^{2}_{L^2}+\frac{1}{4}\Big(\Big\|{\omega_\theta\over r}\Big\|^{2}_{L^2}
+\|\partial_r\omega_\theta\|^{2}_{L^2}\Big).
\end{split}
\end{equation*}
Indeed, integration by parts gives
\begin{equation*}
\begin{split}
-\int_{\mathbb{R}^3}\partial_r\rho\,\omega_\theta \mathrm{d}x=&-2\pi\int\partial_r\rho\,\omega_\theta r\mathrm{d}r\,\mathrm{d}z\\
=&2\pi\int\rho\partial_r\omega_\theta\, r\mathrm{d}r\,\mathrm{d}z+2\pi\int\rho{\omega_\theta\over r} r\mathrm{d}r\,\mathrm{d}z\\
=&\int_{\mathbb{R}^3}\rho\Big(\partial_r\omega_\theta+{\omega_\theta\over r}\Big) \mathrm{d}x.
\end{split}
\end{equation*}
Putting together these estimates and using the fact that $\|\nabla_{h}\omega_\theta\|_{L^2}^2=\|\partial_r\omega_\theta\|_{L^2}^2+\|\omega_\theta/r\|_{L^{2}}$ yield
\begin{equation*}
\begin{split}
&\frac{\mathrm{d}}{\mathrm{d}t}\|\omega_\theta(t)\|_{L^2}^2+\|\nabla_{h}\omega_\theta(t)\|_{L^2}^2+\big\|{\omega_\theta\over r}(t)\big\|_{L^2}^2\\
\leq&\|u(t)\|_{L^2}\|\omega_{\theta}(t)\|^{2}_{L^2}+\frac12\norm{\frac{\omega_\theta}{r}(t) }^{2}_{L^2}\norm{\nabla_h\Big(\frac{\omega_\theta}{ r}\Big)(t)}^{2}_{L^2}
+2\|\rho_{0}\|^{2}_{L^2}.
\end{split}
\end{equation*}
Next, integrating the above inequality with respect to time,  we readily get
\begin{align}\label{225}
&\|\omega_\theta(t)\|_{L^2}^2+\int_0^t\|\nabla_{h}\omega_\theta(\tau)\|_{L^2}^2\intd\tau+\int_0^t\big\|{\omega_\theta\over r}(\tau)\big\|_{L^2}^2\intd\tau\nonumber\\
\leq&\|\omega_\theta(0)\|^{2}_{L^2}+\norm{u}_{L_{t}^{\infty}L^{2}}\int_{0}^{t}\|\omega_{\theta}(\tau)\|^{2}_{L^2}\mathrm{d}\tau+\frac12\norm{\omega_\theta\over r}^{2}_{L^\infty_{t}L^2}\int_0^t\norm{\nabla_h\Big(\frac{\omega_\theta}{ r}\Big)(\tau)}^{2}_{L^2}\intd\tau
+2t\|\rho_{0}\|^{2}_{L^2}\nonumber\\
\leq&\|\omega_\theta(0)\|^{2}_{L^2}+\norm{u_0}_{L^{2}}\int_{0}^{t}\|\omega_{\theta}(\tau)\|^{2}_{L^2}\mathrm{d}\tau+\frac12\norm{\omega_\theta\over r}^{2}_{L^\infty_{t}L^2}\int_0^t\norm{\nabla_h\Big(\frac{\omega_\theta}{ r}\Big)(\tau)}^{2}_{L^2}\intd\tau
+2t\|\rho_{0}\|^{2}_{L^2}.
\end{align}
To show the estimate for the \mbox{quantity $\Gamma:=\frac{\omega_\theta}{r}$.} We observe that the quantity $\Gamma$ satisfies the following equation
\begin{equation}\label{equation_i}
\big(\partial_t+u\cdot\nabla\big)\Gamma-\big(\Delta_{h}+{{2 \over r}}\partial_r\big) \Gamma =-\frac{\partial_r\rho}{r}.
\end{equation}
Taking the $L^2$-inner product of  \eqref{equation_i} with $\Gamma$  and integrating by parts, we get
\begin{align*}
\frac12\frac{\mathrm{d}}{\mathrm{d}t}\|\Gamma(t)\|_{L^2}^2
+\|\nabla_h\Gamma(t)\|_{L^2}^2
-4\pi\int\partial_r(\Gamma)
{\Gamma} \mathrm{d}r\mathrm{d}z
&=
-2\pi\int\partial_r\rho\,\Gamma \mathrm{d}r\mathrm{d}z.
\end{align*}

For the term in the right side of equality above, integrating by parts and using the fact $\rho(t,0,z)=0$, we obtain
\begin{equation*}
\begin{split}
-2\pi\int\partial_r\rho\,\Gamma \mathrm{d}r\mathrm{d}z
=2\pi\int{\rho\over r}\,\partial_r\Gamma\, r \mathrm{d}r\mathrm{d}z=\int_{\mathbb{R}^3}\frac{\rho}{r}\partial_r\Gamma \mathrm{d}x
\le
\Big\|{\rho\over r}\Big\|_{L^2}\| \partial_r\Gamma\|_{L^2}\leq \frac12\Big\|\frac{\rho}{r}\Big\|^{2}_{L^{2}}+\frac12\norm{\partial_r\Gamma}^{2}_{L^{2}}.
\end{split}
\end{equation*}
Since
$$
4\pi\int\partial_r(\Gamma)\Gamma \mathrm{d}r\mathrm{d}z
=2\pi\int_{\RR}\int_0^{+\infty}\partial_r(\Gamma)^2 \mathrm{d}r\mathrm{d}z
\le
0,
$$
then one can conclude that
\begin{equation*}
\frac{\mathrm{d}}{\mathrm{d}t}\Big\|{\omega_\theta\over r}(t)\Big\|_{L^2}^2
+\Big\|\nabla_{h}\Big({\omega_\theta\over r}\Big)(t)\Big\|_{L^2}^2
\le\Big\|{\rho\over r}(t)\Big\|_{L^2}^2.
\end{equation*}
Integrating  this inequality in time, we immediately get
\begin{equation}\label{eq08}
\Big\|{\omega_\theta\over r}(t)\Big\|_{L^2}^2
+\int_0^t\Big\|\nabla_{h}\Big({\omega_\theta\over r}\Big)(\tau)\Big\|_{L^2}^2\mathrm{d}\tau
\leq\Big\|{\omega_\theta(0)\over r}\Big\|_{L^2}^2+
\int_0^t\Big\|{\rho\over r}(\tau)\Big\|_{L^2}^2\mathrm{d}\tau.
\end{equation}
Plugging  \eqref{density-e} of \mbox{Proposition \ref{cor1}}
into \eqref{eq08}, we have
\begin{equation*}
\begin{split}
&\Big\|{\omega_\theta\over r}(t)\Big\|_{L^2}^2
+\int_0^t\Big\|\nabla_{h}\Big({\omega_\theta\over r}\Big)(\tau)\Big\|_{L^2}^2\intd\tau\\
\leq&\Big\|{\omega_\theta(0)\over r}\Big\|_{L^2}^2+
\frac{\|\rho_0\|^{2}_{L^2}}{r_0^2}t+2\pi\|\rho_0\|^{2}_{L^\infty}\intd_0t\int_0^t\Big\|\frac{u_r}{r}(\tau)\Big\|_{L^\infty}\mathrm{d}\tau
\\
&+4\pi\|\rho_0\|^{2}_{L^\infty} t\int_0^t\Big\|\frac{u_r}{r}(\tau)\Big\|_{L^\infty}\mathrm{d}\tau\,\int_0^{t}\|\nabla_{h} u(\tau)\|^{\frac{1}{2}}_{L^2}\|\nabla_{h} \omega(\tau)\|^{\frac{1}{2}}_{L^2}\mathrm{d}\tau\\
\le&
C(1+t)+Ct^{\frac32}\Big\|\frac{u_r}{r}\Big\|_{L^{2}_{t}L^\infty}
+C t^{2}\Big\|\frac{u_r}{r}\Big\|_{L^{2}_{t}L^\infty}\,\Big(\int_0^{t}\|\nabla_{h} u(\tau)\|^{2}_{L^2}\mathrm{d}\tau\Big)^{\frac{1}{4}}\Big(\int_0^{t}\|\nabla_{h} \omega(\tau)\|^2_{L^2}\mathrm{d}\tau\Big)^{\frac{1}{4}}.
\end{split}
\end{equation*}
It follows from Proposition \ref{energy} that,
\begin{align}\label{eq10}
&\big\|{\omega_\theta\over r}(t)\big\|_{L^2}^2
+\int_0^t\big\|\nabla_{h}\Big({\omega_\theta\over r}\Big)(\tau)\big\|_{L^2}^2\intd\tau \nonumber\\
\le& C (1+t^{4})\Big(1+\int_0^t\big\|\frac{u_r}{r}(\tau)\big\|_{L^\infty}^2\mathrm{d}\tau\Big)
+\frac14\Big(\int_0^{t}\|\nabla_{h} \omega(\tau)\|^2_{L^2}\mathrm{d}\tau\Big)^{\frac12}.
\end{align}
By using \eqref{bs-2} and the Young inequality, one can conclude that
\begin{align*}
C(1+t^{4})\int_0^{t}\Big\|\frac{u_r}{r}(\tau)\Big\|_{L^\infty}^2\mathrm{d}\tau
\leq&C(1+t^{4})\int_0^{t}\Big\|\frac{\omega_\theta}{r}(\tau)\Big\|^{\frac12}_{L^2}\Big\|\nabla_h\Big(\frac{\omega_\theta}{r}\Big)(\tau)\Big\|^{\frac12}_{L^2}\intd\tau\\
\leq&C(1+t^{8})\int_0^{t}\Big\|\frac{\omega_\theta}{r}(\tau)\Big\|_{L^2}^2\mathrm{d}\tau
+\frac12\int_0^{t}\big\|\nabla_{h}\Big(\frac{\omega_\theta}{r}\Big)(\tau)\big\|_{L^2}^2\mathrm{d}\tau.
\end{align*}
Plugging  this estimate into \eqref{eq10}, we obtain
\begin{align}\label{eq100}
\Big\|{\omega_\theta\over r}(t)\Big\|_{L^2}^2
+\int_0^t\big\|\nabla_{h}\Big({\omega_\theta\over r}\Big)(\tau)\big\|_{L^2}^2\intd\tau
\leq&\Big\|{\omega_\theta(0)\over r}\Big\|_{L^2}^2+
C(1+t^{8})\Big(1+\int_0^{t}\Big\|\frac{\omega_\theta}{r}(\tau)\Big\|_{L^2}^2\mathrm{d}\tau\Big)\nonumber\\
&+\frac14\Big(\int_0^{t}\|\nabla_{h} \omega(\tau)\|^2_{L^2}\mathrm{d}\tau\Big)^{\frac12}.
\end{align}
Note that
\begin{align*}
\frac12\norm{\omega_\theta\over r}^{2}_{L^\infty_{t}L^2}\int_0^t\norm{\nabla_h\Big(\frac{\omega_\theta}{ r}\Big)(\tau)}^{2}_{L^2}\intd\tau\leq&\frac18
\Big(\Big\|{\omega_\theta\over r}\Big\|_{L^\infty_tL^2}^2
+\int_0^t\big\|\nabla_{h}\Big({\omega_\theta\over r}\Big)(\tau)\big\|_{L^2}^2\intd\tau\Big)^{2}\\
\leq&\frac18\Big\|{\omega_\theta(0)\over r}\Big\|_{L^2}^4+
C(1+t^{16})\Big(1+\int_0^{t}\Big\|\frac{\omega_\theta}{r}(\tau)\Big\|_{L^2}^2\mathrm{d}\tau\Big)^{2}\nonumber\\
&+\frac{1}{64}\Big(\int_0^{t}\|\nabla_{h} \omega(\tau)\|^2_{L^2}\mathrm{d}\tau\Big).
\end{align*}
This together with \eqref{225} and \eqref{eq100} give
\begin{align*}
&\|\omega_\theta(t)\|_{L^2}^2+\int_0^t\Big(\|\nabla_{h}\omega_\theta(\tau)\|_{L^2}^2
+\Big\|{\omega_\theta\over r}(\tau)\Big\|_{L^2}^2\Big)\intd\tau+\big\|{\omega_\theta\over r}(t)\big\|_{L^2}^4
+\Big(\int_0^t\Big\|\nabla_h\Big({\frac{\omega_\theta}{r}\Big)(\tau)}\Big\|_{L^2}^2\intd\tau\Big)^2
\nonumber\\
\leq&C\norm{u}_{L_{t}^{\infty}L^{2}}\int_{0}^{t}\|\omega_{\theta}(\tau)\|^{2}_{L^2}\mathrm{d}\tau+C_0(1+t^{16})\Big(1+\int_0^{t}\big\|\frac{\omega_\theta}{r}(\tau)\big\|_{L^2}^2\mathrm{d}\tau
\Big)^{2}\nonumber\\&+\frac12\int_0^{t}\|\nabla_{h} \omega(\tau)\|^2_{L^2}\mathrm{d}\tau
+2t\|\rho_{0}\|^{2}_{L^2}+\big\|{\omega_\theta(0)\over r}\big\|_{L^2}^2+\frac18\big\|{\omega_\theta(0)\over r}\big\|_{L^2}^2.
\end{align*}
The Gronwall inequality  ensures that
\begin{equation*}
\|\omega_\theta(t)\|_{L^2}^2+\int_0^t\Big(\|\nabla_{h}\omega_\theta(\tau)\|_{L^2}^2
+\Big\|{\omega_\theta\over r}(\tau)\Big\|_{L^2}^2\Big)\intd\tau+\Big\|\frac{\omega_\theta}{r}(t)\Big\|_{L^2}^2
+\int_0^t\Big\|\nabla_{h}\Big(\frac{\omega_\theta}{r}\Big)(\tau)\Big\|_{L^2}^2\intd\tau
\leq C e^{\exp{C t^{17}}}.
\end{equation*}
This completes the proof.
\end{proof}
\begin{proposition}\label{log}
Assume that $u_0\in H^1,$ with $\frac{\omega_0}{r}\in L^2$ and $\omega_{0}\in\sqrt{\mathbb{L}}$. Let
$\rho_0\in L^2\cap L^\infty$ depending only on $(r,z)$ such that
$\hbox{\rm Supp }\rho_0$ does not intersect the axis $(Oz)$ and
$\Pi_z(\hbox{\rm Supp }\rho_0)$ is a compact set.
Then any smooth axisymmetric without swirl solution $(\rho,u)$ of  \eqref{bs}  satisfies
\begin{equation*}
\norm{\omega(t)}_{\sqrt{\mathbb{L}}}\leq Ce^{\exp^{Ct^{17}}}(\norm{\omega_0}_{\sqrt{\mathbb{L}}}+1).
\end{equation*}
Here the positive constant $C$ depends on the initial data.
\end{proposition}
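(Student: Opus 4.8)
The plan is to run a weighted $L^p$ energy estimate on the scalar vorticity equation \eqref{tourbillon} and then to \emph{bootstrap in} $p$ so that the $\sqrt p$ growth of the $L^p$ norm is preserved across all exponents. Concretely, I multiply \eqref{tourbillon} by $|\omega_\theta|^{p-2}\omega_\theta$ and integrate over $\mathbb R^3$ with the measure $2\pi r\,\mathrm{d}r\,\mathrm{d}z$. The convection term vanishes by incompressibility; the horizontal diffusion produces the nonnegative reservoir $(p-1)\int|\omega_\theta|^{p-2}|\partial_r\omega_\theta|^2\,\mathrm{d}x$, and the geometric term $\frac{\omega_\theta}{r^2}$ produces the nonnegative reservoir $\int r^{-2}|\omega_\theta|^p\,\mathrm{d}x$. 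The stretching term is bounded crudely by $\|\frac{u_r}{r}\|_{L^\infty}\|\omega_\theta\|_{L^p}^p$. For the buoyancy term $-\int\partial_r\rho\,|\omega_\theta|^{p-2}\omega_\theta\,\mathrm{d}x$ I integrate by parts in $r$ (the axis contribution vanishes thanks to the $r$-weight and to $\mathrm{Supp}\,\rho$ staying away from $(Oz)$), which transfers the derivative onto the vorticity and, crucially, replaces $\partial_r\rho$ by $\rho$ itself. Using the maximum principle $\|\rho(t)\|_{L^\infty}\le\|\rho_0\|_{L^\infty}$ from Proposition \ref{energy} and Young's inequality against the two reservoirs, this term is dominated by $\tfrac12(\text{diffusion})+\tfrac12(\text{geometric})+Cp\|\rho_0\|_{L^\infty}^2\|\omega_\theta\|_{L^{p-2}}^{p-2}$. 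Altogether I reach the differential recursion
\begin{equation*}
\frac{\mathrm{d}}{\mathrm{d}t}\|\omega_\theta(t)\|_{L^p}^p\le p\Big\|\frac{u_r}{r}(t)\Big\|_{L^\infty}\|\omega_\theta(t)\|_{L^p}^p+Cp^2\|\rho_0\|_{L^\infty}^2\|\omega_\theta(t)\|_{L^{p-2}}^{p-2}.
\end{equation*}

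The second step is the bootstrap. I claim that $\|\omega_\theta(t)\|_{L^p}\le\sqrt p\,\Theta(t)$ for every even integer $p\ge2$, where $\Theta$ solves $\frac{\mathrm{d}}{\mathrm{d}t}\Theta^2=2\|\frac{u_r}{r}\|_{L^\infty}\Theta^2+2C\|\rho_0\|_{L^\infty}^2$, with $\Theta(0)^2$ chosen to dominate both $\|\omega_0\|_{\sqrt{\mathbb{L}}}^2$ and the $L^2$-bound of Proposition \ref{prop-cruc} (this secures the base case $p=2$, which falls outside the recursion). Assuming the bound at level $p-2$, I insert $\|\omega_\theta\|_{L^{p-2}}^{p-2}\le(p-2)^{(p-2)/2}\Theta^{p-2}$ into the recursion and compare with the supersolution candidate $Y_p:=p^{p/2}\Theta^p$, whose derivative is $\sim p^{p/2+1}\Theta^{p-1}\Theta'$. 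The buoyancy contribution $Cp^2\|\rho_0\|_{L^\infty}^2(p-2)^{(p-2)/2}\Theta^{p-2}$ must be absorbed by a piece of $Y_p'$, and the decisive point is that the apparently dangerous prefactor collapses uniformly in $p$:
\begin{equation*}
\frac{p^2\,(p-2)^{(p-2)/2}}{p\cdot p^{p/2}}=\Big(1-\frac2p\Big)^{(p-2)/2}\le1.
\end{equation*}
Thus the growth rate forced on $\Theta$ is independent of $p$, which is exactly the cancellation that makes $\sqrt{\mathbb{L}}$ — and not a larger space — the natural endpoint. A comparison-principle argument then closes the induction over $p=2,4,6,\dots$, and log-convexity of $p\mapsto\|\omega_\theta\|_{L^p}$ extends the bound to all real $p\ge2$.

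Finally, solving the linear ODE for $\Theta^2$ gives $\Theta(t)\le e^{\int_0^t\|\frac{u_r}{r}\|_{L^\infty}}\big(\Theta(0)+C\|\rho_0\|_{L^\infty}\sqrt t\big)$. The integrating factor is finite because estimate \eqref{bs-2} together with Proposition \ref{prop-cruc} yields
\begin{equation*}
\int_0^t\Big\|\frac{u_r}{r}(\tau)\Big\|_{L^\infty}\mathrm{d}\tau\le C\int_0^t\Big\|\frac{\omega_\theta}{r}(\tau)\Big\|_{L^2}^{\frac12}\Big\|\nabla_h\Big(\frac{\omega_\theta}{r}\Big)(\tau)\Big\|_{L^2}^{\frac12}\mathrm{d}\tau\le Ce^{\exp Ct^{17}},
\end{equation*}
and $\Theta(0)$ is likewise controlled by Proposition \ref{prop-cruc}. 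Since $|\omega|=|\omega_\theta|$, one has $\|\omega\|_{\sqrt{\mathbb{L}}}=\|\omega_\theta\|_{\sqrt{\mathbb{L}}}=\sup_{p\ge2}\|\omega_\theta\|_{L^p}/\sqrt p\le\Theta(t)$, which produces the asserted bound $Ce^{\exp Ct^{17}}(\|\omega_0\|_{\sqrt{\mathbb{L}}}+1)$.

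I expect the main obstacle to be the rigorous bookkeeping of the $p$-uniform bootstrap. One must (i) verify that the buoyancy integration by parts genuinely leaves only the single power $p$ (not $p^2$) after the first-order term is absorbed into the diffusion reservoir, so that the displayed combinatorial identity does its job; and (ii) handle the degenerate low exponents ($p$ near $2$, where $\|\omega_\theta\|_{L^{p-2}}$ is ill-behaved) by isolating $p=2$ as a separate base case fed by Proposition \ref{prop-cruc} and by filling in the non-even exponents through interpolation. A secondary nuisance is confirming that the geometric reservoir $\int r^{-2}|\omega_\theta|^p\,\mathrm{d}x$ and the vanishing of the axis boundary term are legitimate, both of which rest on $\mathrm{Supp}\,\rho$ remaining away from $(Oz)$ along the flow.
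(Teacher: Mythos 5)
Your proposal is correct in its essential mechanics, but it takes a genuinely different route from the paper, so let me compare the two. Both start from the same $L^p$ energy estimate on \eqref{tourbillon} and the same integration by parts turning $-\int\partial_r\rho\,|\omega_\theta|^{p-2}\omega_\theta\,\mathrm{d}x$ into $\int\rho\big((p-1)|\omega_\theta|^{p-2}\partial_r\omega_\theta+|\omega_\theta|^{p-2}\tfrac{\omega_\theta}{r}\big)\mathrm{d}x$; the difference is how this term is closed. The paper applies H\"older with exponents $\big(p,\tfrac{2p}{p-2},2\big)$, putting $\rho$ in $L^p$, so the inhomogeneity comes out as $\tfrac p2\|\rho\|_{L^p}^2\|\omega_\theta\|_{L^p}^{p-2}$ --- the \emph{same} $L^p$ norm to the power $p-2$. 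Dividing by $\|\omega_\theta\|_{L^p}^{p-2}$ then yields, for each fixed $p$ separately, the closed linear ODE $\frac{\mathrm{d}}{\mathrm{d}t}\|\omega_\theta\|_{L^p}^2\le\|\tfrac{u_r}{r}\|_{L^\infty}\|\omega_\theta\|_{L^p}^2+\tfrac p2\|\rho_0\|_{L^p}^2$, and Gronwall gives the $\sqrt p$ growth with no induction at all. You instead put $\rho$ in $L^\infty$, which produces $\|\omega_\theta\|_{L^{p-2}}^{p-2}$ and couples the exponents, and you resolve the hierarchy by induction over even $p$ against a single comparison function $\Theta$, the crux being the $p$-uniform cancellation $(1-2/p)^{(p-2)/2}\le1$. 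I checked your recursion, the supersolution computation, and that cancellation: they are all correct, and the interpolation filling in non-even $p$ costs only a factor $\sqrt2$. Your route is longer, but it exhibits the structural reason why $\sqrt p$ (hence $\sqrt{\mathbb{L}}$) is the natural endpoint; the paper's choice of H\"older exponents accomplishes the same decoupling in one line.

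One step of yours needs repair: the base case. You propose to choose the constant $\Theta(0)^2$ to dominate ``the $L^2$-bound of Proposition \ref{prop-cruc}'', but that bound is time-dependent (of size $Ce^{\exp Ct^{17}}$), so no constant dominates it for all $t$, and as written the base case $\|\omega_\theta(t)\|_{L^2}\le\sqrt2\,\Theta(t)$ is not secured. The fix stays inside your framework: run the $p=2$ energy estimate directly, bounding the buoyancy term by $\|\rho_0\|_{L^2}\big(\|\partial_r\omega_\theta\|_{L^2}+\|\tfrac{\omega_\theta}{r}\|_{L^2}\big)$ exactly as in the proof of Proposition \ref{prop-cruc}; this gives $\frac{\mathrm{d}}{\mathrm{d}t}\|\omega_\theta\|_{L^2}^2\le2\|\tfrac{u_r}{r}\|_{L^\infty}\|\omega_\theta\|_{L^2}^2+2\|\rho_0\|_{L^2}^2$, which is precisely your $\Theta$-ODE once $C\ge\|\rho_0\|_{L^2\cap L^\infty}^2$, so the comparison principle yields the base case with $\Theta(0)=\|\omega_0\|_{\sqrt{\mathbb{L}}}$ and no appeal to Proposition \ref{prop-cruc} there. (Alternatively, let $\Theta(0)$ depend on the time horizon $T$; the argument still closes but the final bound gains an extra exponential.) Proposition \ref{prop-cruc} is then invoked only where the paper also needs it: together with \eqref{bs-2} and H\"older in time, to make the Gronwall factor $e^{\int_0^t\|\frac{u_r}{r}(\tau)\|_{L^\infty}\mathrm{d}\tau}$ finite.
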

\begin{proof}
Multiplying the vorticity equation \eqref{tourbillon} with $|\omega_\theta|^{p-2}\omega_\theta$ and performing integration in space, we get
\begin{equation*}
\begin{split}
&\frac1p\frac{\mathrm{d}}{\mathrm{d}t}\int_{\mathbb{R}^3} |\omega_\theta|^p\mathrm{d}x+(p-1)\int_{\mathbb{R}^3}|\nabla_h\omega_\theta|^{2}|\omega_\theta|^{p-2}\mathrm{d}x+\int_{\mathbb{R}^3}|\omega_\theta|^{p-2}\frac{\omega^{2}_\theta}{r^{2}}\mathrm{d}x\\
=&\int_{\mathbb{R}^3}\frac{u^r}{r} |\omega_\theta|^p\mathrm{d}x-\int_{\mathbb{R}^3}\partial_{r}\rho|\omega_\theta|^{p-2}\omega_\theta \mathrm{d}x.
\end{split}
\end{equation*}
For the first term in the last line of the above equality, we deuce by the H\"older inequality that
\begin{equation*}
\int_{\mathbb{R}^3}\frac{u_r}{r} |\omega_\theta|^p\mathrm{d}x\leq \Big\|\frac{u_{r}}{r}\Big\|_{L^{\infty}}\norm{\omega_\theta}^{p}_{L^p}.
\end{equation*}
For the second term, by the H\"older inequality,  we have
\begin{align*}
&-\int_{\mathbb{R}^3}\partial_{r}\rho|\omega_\theta|^{p-2}\omega_\theta \mathrm{d}x\\
\leq &\|\rho\|_{L^p}\big\||\omega_\theta|^{\frac {p-2}{2}}\big\|_{L^\frac{2p}{p-2}}\Big\||\omega_\theta|^{\frac {p-2}{2}}\frac{\omega_\theta}{r}\Big\|_{L^2}+(p-1)\|\rho\|_{L^p}\big\||\omega_\theta|^{\frac {p-2}{2}}\big\|_{L^\frac{2p}{p-2}}\Big\||\omega_\theta|^{\frac {p-2}{2}}\partial_r{\omega_\theta}\Big\|_{L^2}\nonumber\\ \leq&\frac{p-1}{2}\int_{\mathbb{R}^3}|\nabla_h\omega_\theta|^{2}|\omega_\theta|^{p-2}\mathrm{d}x+\frac12\int_{\mathbb{R}^3}|\omega_\theta|^{p-2}\frac{\omega^{2}_\theta}{r^{2}}\mathrm{d}x
+\frac{p}{2}\norm{\rho}^{2}_{L^p}
\norm{\omega_\theta}^{p-2}_{L^p}.
\end{align*}
Indeed, integrating by parts leads to
\begin{equation*}
\begin{split}
-\int_{\mathbb{R}^3}\partial_{r}\rho|\omega_\theta|^{p-2}\omega_\theta \mathrm{d}x=&-2\pi\int\partial_{r}\rho|\omega_\theta|^{p-2}\omega_\theta r\mathrm{d}r\mathrm{d}z\\
=&\int\rho|\omega_\theta|^{p-2}\omega_\theta \mathrm{d}r\mathrm{d}z+(p-1)\int\rho|\omega_\theta|^{p-2}\partial_{r}\omega_\theta r\mathrm{d}r\mathrm{d}z\\
=&\int_{\mathbb{R}^3}\rho|\omega_\theta|^{p-2}\frac{\omega_\theta}{r} \mathrm{d}x+(p-1)\int_{\mathbb{R}^3}\rho|\omega_\theta|^{p-2}\partial_{r}\omega_\theta \mathrm{d}x.
\end{split}
\end{equation*}
Therefore, by virtue of \mbox{Proposition \ref{energy}}, one has
\begin{equation*}
\frac{\mathrm{d}}{\mathrm{d}t}\norm{\omega_\theta(t)}^{2}_{L^{p}}\leq \Big\|\frac{u_{r}}{r}(t)\Big\|_{L^{\infty}}\norm{\omega_\theta(t)}^{2}_{L^p}+\frac{p}{2}\norm{\rho_0}^{2}_{L^{p}}.
\end{equation*}
The Gronwall inequality yields that
\begin{equation*}
\norm{\omega_\theta(t)}^{2}_{L^{p}}\leq e^{\int_0^t\|\frac{u_{r}}{r}(\tau)\|_{L^{\infty}}\intd\tau}\Big(\norm{\omega_{\theta}(0)}^{2}_{L^{p}}+\frac{p}{2}\norm{\rho_0}^{2}_{L^p}t\Big).
\end{equation*}
According to \eqref{bs-2} of \mbox{Proposition \ref{biot-s}} and to the fact that $\norm{\omega}_{L^{p}}=\norm{\omega_\theta}_{L^{p}}$, one can deduce that
\begin{equation*}
\norm{\omega(t)}^{2}_{\sqrt{\mathbb{L}}}\leq e^{C\exp^{Ct^{17}}}\Big(\norm{\omega_0}^{2}_{\sqrt{\mathbb{L}}}+\frac{t}{2}\norm{\rho_0}^{2}_{L^{2}\cap L^{\infty}}\Big).
\end{equation*}
This completes the proof.
\end{proof}
Proposition above  together with the well-known fact that $\|\nabla u\|_{L^2}\leq C\frac{p^2}{p-1}\|\omega\|_{L^p}$ for $p\in]1,\infty[$ yields that
$\sup_{p\geq2}\frac{\|\nabla u(t)\|_{L^p}}{p\sqrt{p}}$ is locally bounded with respect to time. But, the growth rate $p\sqrt{p}$ does not satisfies the dual Osgood modulus of continuity. This requires us to further refine the growth estimate of $\|\nabla u\|_{L^p}$ to get the growth estimate we require. Inspired by the Boot-Strap argument, we will establish the below proposition which is the heart in our proof based on the estimate of $\sup_{p\geq2}\frac{\|\nabla u(t)\|_{L^p}}{p\sqrt{p}}$.
\begin{proposition}\label{prop-fine}
Assume that $u_0\in H^1,$ with $\frac{\omega_0}{r}\in L^2$ and $\omega_{0}\in\sqrt{\mathbb{L}}$. Let
$\rho_0\in L^2\cap L^\infty$ depending only on $(r,z)$ such that
$\hbox{\rm Supp }\rho_0$ does not intersect the axis $(Oz)$ and
$\Pi_z(\hbox{\rm Supp }\rho_0)$ is a compact set.
Then any smooth axisymmetric without swirl solution $(\rho,u)$ of  \eqref{bs}  satisfies
\begin{equation}\label{eq-h-e}
\sup_{p\geq2}\int_0^t\sum_{q\geq0}2^{qs}\frac{\|u_{q}(\tau)\|_{L^p}}{p^{\frac32}}\intd\tau\leq C_1e^{\exp{C_1t^{17}}}\quad \text{for}\quad s\in[0,2[.
\end{equation}
In particular, we have
\begin{equation}\label{Loglip}
\sup_{2\leq p<\infty}\int_0^t\frac{\norm{\nabla u(\tau)}_{L^{p}}}{\sqrt{p}}\mathrm{d}\tau\leq C_2e^{\exp{C_2t^{17}}}.
\end{equation}
Here constants $C_1$ and $C_2$ depend on the initial data.
\end{proposition}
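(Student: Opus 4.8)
The plan is to establish the Besov-type bound \eqref{eq-h-e} first and then to read off \eqref{Loglip} from it. Since the only smoothing at our disposal acts in the horizontal directions, I would localize the momentum equation in frequency and propagate each dyadic block by the horizontal heat flow. Applying the Leray projector $\mathbb{P}$ to eliminate the pressure and setting $G:=\rho e_3-\operatorname{div}(u\otimes u)$, Duhamel's formula gives for each block $u_q=\Delta_q u$
\begin{equation*}
u_q(t)=e^{t\Delta_h}\Delta_q u_0+\int_0^t e^{(t-\tau)\Delta_h}\Delta_q\mathbb{P}G(\tau)\,\mathrm{d}\tau .
\end{equation*}
The whole scheme is a bootstrap: the pointwise a priori bound $\|\nabla u(t)\|_{L^p}\lesssim p^{3/2}$, obtained by combining the $\sqrt{\mathbb{L}}$-estimate of Proposition~\ref{log} with Lemma~\ref{lem-p}, together with $u\in L^2\cap L^\infty$ (energy bound of Proposition~\ref{energy} and \eqref{bs-1}) and the maximum principle $\|\rho\|_{L^p}\le\|\rho_0\|_{L^2\cap L^\infty}$, is fed into the forcing $G$; the gain produced by integrating the horizontal semigroup in time is precisely what upgrades this rough input into the summable, $p^{3/2}$-normalized statement \eqref{eq-h-e}.

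The decisive ingredient is a micro-local splitting of each block according to the size of its horizontal frequency, $\Delta_q=\Delta_q^{\sharp}+\Delta_q^{\flat}$, where $\Delta_q^{\sharp}$ retains $|\xi_h|\sim 2^q$ and $\Delta_q^{\flat}$ the complementary region $|\xi_h|\ll 2^q$ (so that $|\xi_3|\sim 2^q$). On the horizontally high-frequency piece the heat estimate of Proposition~\ref{heat}, applied in the horizontal variables with $z$ as a parameter, furnishes the decay $\|e^{(t-\tau)\Delta_h}\Delta_q^{\sharp}f\|_{L^p}\lesssim e^{-c(t-\tau)2^{2q}}\|\Delta_q^{\sharp}f\|_{L^p}$; integrating in $\tau$ converts this into a factor $2^{-2q}$, which simultaneously renders the series $\sum_q 2^{qs}(\cdots)$ convergent for $s<2$ (through $2^{qs}2^{-2q}=2^{q(s-2)}$) and absorbs the $p^{3/2}$ growth of the forcing. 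For $G$ itself I would use the uniform-in-$p$ boundedness of the single-block multiplier $\Delta_q\mathbb{P}$ to write $\|\Delta_q\mathbb{P}(\rho e_3)\|_{L^p}\lesssim\|\rho\|_{L^p}$ and $\|\Delta_q\mathbb{P}\operatorname{div}(u\otimes u)\|_{L^p}\lesssim 2^q\|u\|_{L^\infty}\|u\|_{L^p}$, closing these with the a priori controls listed above.

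The main obstacle is the vertically dominated piece $\Delta_q^{\flat}$, on which the horizontal heat flow gives no decay and the $2^{-2q}$ gain is unavailable. Here I would abandon Duhamel in favour of the axisymmetric algebra: since $|\xi_3|\sim 2^q$ on this piece, $2^q\|\Delta_q^{\flat}u\|_{L^p}$ is comparable to a vertical derivative, and the identity \eqref{identity} of Lemma~\ref{prop-identity} together with its consequence \eqref{eq-ds}, namely $\|\partial_z^2(u_r/r)\|_{L^p}\lesssim\|\partial_z(\omega_\theta/r)\|_{L^p}$, lets one trade vertical derivatives of the velocity for vertical derivatives of $\omega_\theta/r$; these are controlled through the propagation of $\partial_z\omega$ and the quantities estimated in Proposition~\ref{prop-cruc}. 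It is exactly on this regime that one cannot improve on the $p^{3/2}$ weight, so that \eqref{eq-h-e} is the sharp outcome of the two regimes combined; summing over $q$ and invoking the double-exponential control of $\|\omega\|_{\sqrt{\mathbb{L}}}$ closes \eqref{eq-h-e}. I expect this vertically dominated contribution, where no dissipation helps and everything must be extracted from the geometry of axisymmetric fields, to be the hard part.

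Finally, \eqref{Loglip} follows from \eqref{eq-h-e} at $s=1$ once the Littlewood--Paley constants are accounted for correctly. The naive triangle inequality $\|\nabla u\|_{L^p}\le\sum_q 2^q\|u_q\|_{L^p}$ loses a full power of $p$ and is too lossy; instead I would use the sharp embedding $B^0_{p,2}\hookrightarrow L^p$, whose norm grows only like $\sqrt p$ for $p\ge2$, in the form $\|\nabla u\|_{L^p}\lesssim\|\nabla S_0u\|_{L^p}+\sqrt p\,\big\|\big(\sum_{q\ge0}|\nabla u_q|^2\big)^{1/2}\big\|_{L^p}$. Dividing by $\sqrt p$ cancels this factor, and Minkowski's integral inequality combined with \eqref{eq-h-e} at $s=1$ on the high frequencies and the energy bound of Proposition~\ref{energy} on $S_0$ then yields $\sup_{p\ge2}\int_0^t\|\nabla u(\tau)\|_{L^p}/\sqrt p\,\mathrm{d}\tau\le C_2 e^{\exp C_2 t^{17}}$. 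The reconciliation of the two $p$-powers through this $\ell^2$-valued, $\sqrt p$-normalized embedding is the conceptual heart of the passage from \eqref{eq-h-e} to \eqref{Loglip}.
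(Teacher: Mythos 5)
Your plan diverges from the paper at the very first step, and the divergence is fatal rather than cosmetic. In the paper's proof, $u_q$ denotes the \emph{horizontal} block $\Delta^h_q u$, not the isotropic block $\Delta_q u$: every block with $q\geq0$ then has horizontal frequency $\sim 2^q$, so the horizontal heat decay $e^{-c(t-\tau)2^{2q}}$ applies to the whole block, and the vertically dominated regime you identify as ``the hard part'' simply does not exist. Your $\Delta_q^{\flat}$ piece is an artifact of the isotropic decomposition, and your plan for handling it is circular: the ``propagation of $\partial_z\omega$'' is Proposition \ref{prop-pri}, which is proved \emph{after} Proposition \ref{prop-fine} and whose proof invokes precisely \eqref{Loglip} (it enters through the function $h(t)$ there), while Proposition \ref{prop-cruc} contains no vertical derivatives at all; moreover $\partial_z(\omega_\theta/r)$ is only ever controlled in $L^2$, never in $L^p$ for all $p\geq2$ with the required $p$-weights, so even granting it the bound \eqref{eq-h-e} for $s$ near $2$ could not follow. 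A second, independent failure: you estimate the nonlinearity in divergence form, $\|\Delta_q\mathcal{P}\mathrm{div}(u\otimes u)\|_{L^p}\lesssim 2^q\|u\|_{L^\infty}\|u\|_{L^p}$, so after the $2^{-2q}$ gain from the semigroup the dyadic factor is $2^{qs}2^{-2q}2^{q}=2^{q(s-1)}$, whose sum over $q\geq0$ diverges for every $s\geq1$ --- in particular at $s=1$, the exponent your last step requires. The paper avoids this loss by writing $u\cdot\nabla u=\omega\times u+\frac12\nabla|u|^2$, hence $\mathcal{P}(u\cdot\nabla u)=\mathcal{P}(\omega\times u)$, which costs no derivative, gives $\sum_q2^{q(s-2)}<\infty$ for all $s<2$, and feeds directly on the $\sqrt{\mathbb{L}}$ bound of Proposition \ref{log}.

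The passage from \eqref{eq-h-e} to \eqref{Loglip} also fails on power counting. After your embedding $\|\nabla u\|_{L^p}\lesssim\|\nabla S_0u\|_{L^p}+\sqrt{p}\,\big\|\big(\sum_{q\geq0}|\nabla u_q|^2\big)^{1/2}\big\|_{L^p}$ and division by $\sqrt p$, you must still bound $\int_0^t\big\|\big(\sum_q|\nabla u_q|^2\big)^{1/2}\big\|_{L^p}\mathrm{d}\tau$ \emph{uniformly in} $p$; but the only information \eqref{eq-h-e} provides is the $\ell^1$-in-$q$ sum weighted by $p^{-3/2}$, so Minkowski and $\ell^2\subset\ell^1$ give a bound of size $p^{3/2}$, not $O(1)$ --- the $\sqrt p$ saved by the square-function constant sits outside the integral and cannot cancel a $p^{3/2}$ hidden inside the quantity being summed. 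The paper reconciles the powers by interpolation instead: $\sum_{q}2^{q}\|\Delta^h_qu\|_{L^p}\leq C\|\Lambda^{3/4}_hu\|_{L^p}^{2/3}\big(\sum_{q}2^{3q/2}\|\Delta^h_qu\|_{L^p}\big)^{1/3}$, where the first factor is bounded independently of $p$ by $\|u\|_{H^1}+\|\omega\|_{L^{15}}$ (Propositions \ref{prop-cruc} and \ref{log}), so that after H\"older in time only one third of the $p^{3/2}$ weight survives the cube root, i.e.\ exactly $\sqrt p$. This controls $\nabla_h u$; the full gradient then follows from the axisymmetric identity $\partial_zu=-\omega_\theta e_r+\partial_ru_z e_r-(\partial_1u_1+\partial_2u_2)e_z$ together with the $\sqrt{\mathbb{L}}$ bound on $\omega$. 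Without the horizontal blocks, the $\omega\times u$ rewriting, and this $2/3$--$1/3$ interpolation, neither \eqref{eq-h-e} on the stated range of $s$ nor \eqref{Loglip} is reachable by your scheme.
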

\begin{proof}
Applying the operator $\Delta^{h}_{q}$ to \eqref{bs} and using Duhamel formula we get
\begin{equation*}
\begin{split}
u_{q}(t,x)=&e^{t\Delta_{h}}u_{q}(0)-\int_0^te^{(t-\tau)\Delta_{h}}\Delta^{h}_q\mathcal{P}(u\cdot\nabla u)(\tau,x)\mathrm{d}\tau-\int_0^te^{(t-\tau)\Delta_{h}}\Delta^{h}_q\mathcal{P}\rho(\tau,x)e_z\mathrm{d}\tau,
\end{split}
\end{equation*}
where $u_{q}=\Delta^{h}_q u$ and $\mathcal{P}$ is the Leray projection on divergence free vector fields.

Notice that
\begin{equation}
u\cdot\nabla u=\omega\times u+\frac12\nabla|u|^2.
\end{equation}
Therefore,
$$\mathcal{P}(u\cdot\nabla u)=\mathcal{P}(\omega\times u).
$$
According to Proposition \ref{heat}, we have the following estimate for $q\geq0$
$$
\|e^{t\Delta_{h}}\Delta^{h}_q f\|_{L^p(\RR^3)}\leq\norm{\|e^{t\Delta_{h}}\Delta^{h}_q f\|_{L^p(\RR_{h}^2)}}_{L^{p}(\RR_{v})}\le Ce^{-ct2^{2q}}\|\Delta^{h}_q  f\|_{L^p(\RR^3)}.
$$
Therefore,  we have that for $q\geq 0$
\begin{equation*}
\|u_{q}\|_{L^1_tL^p}\leq C 2^{-2q}\|u_{q}(0)\|_{L^p}+Cp2^{-2q}\int_0^t\|\Delta^{h}_q (\omega\times u)(\tau)\|_{L^p}\mathrm{d}\tau+Cp2^{-2q}\|\Delta^{h}_{q}\rho\|_{L^1_t L^p}.
\end{equation*}
Multiplying the above inequality by $2^{qs}$ and summing over $q\geq0$, we readily obtain that
\begin{align}\label{Log-11}
&\sum_{q\geq0}2^{qs}\|u_{q}\|_{L^1_tL^p}\nonumber\\
\lesssim & \sum_{q\geq0}2^{q(s-2)}\|u_{q}(0)\|_{L^p}+p\int_0^t\sum_{q\geq0}2^{q(s-2)}\|\Delta^{h}_q (\omega\times u)(\tau)\|_{L^p}\mathrm{d}\tau+p\sum_{q\geq0}2^{q(s-2)}\|\Delta^{h}_{q}\rho\|_{L^1_t L^p}\nonumber\\
:=&I_1+I_2+I_3.
\end{align}
For the first term $I_1$, by virtue of the imbedding theorem, we have that for $s\in]1,2[$
\begin{align}\label{eq.I1}
I_1\leq C\|u_0\|_{L^p}\leq& C\big(\|u_0\|_{L^2}+\|u_0\|_{L^\infty}\big)\nonumber\\
\leq& C\big(\|u_0\|_{L^2}+\|\omega_0\|_{L^4}\big).
\end{align}
For the third term $I_3$, by Proposition \ref{energy}, we obtain that for $s\in]1,2[$,
\begin{equation}\label{eq.I3}
I_3\leq Cpt\|\rho\|_{L^\infty_tL^p}\leq Cpt\|\rho_0\|_{L^2\cap L^\infty}.
\end{equation}
Now we tackle the second parentheses of $I_{2}$. the H\"older inequality and the interpolation inequality allow us to conclude that for $s\in]1,2[$
\begin{align*}
\sum_{q\geq0}2^{q(s-2)}\|\Delta^{h}_q (\omega\times u)\|_{L^p}\leq&C\|\omega\times u\|_{L^p}\\
\leq&C\|\omega\|_{L^p}\|u\|_{L^\infty}\\
\leq&C\|\omega\|_{L^p}\big(\|u\|_{L^2}+\|\omega(t)\|_{L^4}\big)
\end{align*}
Whence,
\begin{equation}\label{eq.I2}
I_{2}\leq Cp\big(\|u_0\|_{L^2}+\|\omega\|_{L^\infty_tL^4}\big)\int_0^t\norm{\omega(\tau)}_{L^{p}}\intd\tau.
\end{equation}
Putting together \eqref{eq.I1}, \eqref{eq.I3} and \eqref{eq.I2}, we finally get for $s\in]1,2[$
\begin{align*}
\sum_{q\geq0}2^{qs}\|u_{q}\|_{L^1_tL^p}
\leq  C\big(\|u_0\|_{L^2}+\|\omega_0\|_{L^4}\big)+Cpt\norm{\rho_{0}}_{L^{2}\cap L^{\infty}}
+Cp\big(\|u_0\|_{L^2}+\|\omega\|_{L^\infty_tL^4}\big)\int_0^t\norm{\omega(\tau)}_{L^{p}}\intd\tau.
\end{align*}
It follows from \mbox{Proposition \ref{energy}}, \mbox{Proposition \ref{prop-cruc}} and \mbox{Proposition \ref{log}} that
\begin{align}\label{eq-add-p}
&\sup_{p\geq2}\int_0^t\sum_{q\geq0}2^{qs}\frac{\|u_{q}(\tau)\|_{L^p}}{p^{\frac32}}\intd\tau\nonumber\\
\leq & C\big(\|u_0\|_{L^2}+\|\omega_0\|_{L^4}\big)+Ct\|\rho_0\|_{L^2\cap L^\infty}+C\big(\|u_0\|_{L^2}+\|\omega\|_{L^\infty_tL^4}\big)\int_0^t\norm{\omega(\tau)}_{\sqrt{\mathbb{L}}}\intd\tau\nonumber\\
\leq&Ce^{\exp{Ct^{17}}}.
\end{align}
On the other hand, by the sharp interpolation inequality, we have
\begin{align*}
\sum_{q\geq0}2^{q}\|\Delta_q^hu\|_{L^p}
\leq&C\big\|\Lambda^{\frac{3}{4}}_hu\big\|_{L^p}^{\frac{2}{3}}\Big(\sum_{q\geq0}2^{\frac{3}{2}q}\|\Delta_q^hu\|_{L^p}\Big)^{\frac{1}{3}}\nonumber\\
\leq&C\Big(\big\|\Lambda^{\frac{3}{4}}_hu\big\|_{L^2}+\|\nabla u\|_{L^{15}}\Big)^{\frac{2}{3}}\Big(\sum_{q\geq0}2^{\frac{3}{2}q}\|\Delta_q^hu\|_{L^p}\Big)^{\frac{1}{3}}\nonumber\\
\leq&C\Big(\|u\|_{H^1}+\|\omega\|_{L^{15}}\Big)^{\frac{2}{3}}\Big(\sum_{q\geq0}2^{\frac{3}{2}q}{\|\Delta_q^hu\|_{L^p}}\Big)^{\frac{1}{3}}.
\end{align*}
Consequently, by using \eqref{eq-add-p}, \mbox{Proposition \ref{energy}},  \mbox{Proposition \ref{prop-cruc}} and \mbox{Proposition \ref{log}}, we get that
\begin{align}\label{eq-add-pp}
&\int_0^t\frac{\|\nabla_h u(\tau)\|_{L^p}}{\sqrt{p}}\intd\tau\nonumber\\
\leq&C\int_0^t\sum_{q\geq0}2^{q}\frac{\|\Delta_q^h u(\tau)\|_{L^p}}{\sqrt{p}}\intd\tau
+C\int_0^t\|u(\tau)\|_{L^p}\intd\tau\nonumber\\
\leq & Ct^{\frac{2}{3}}\Big(\|u\|_{L^\infty_tH^1}+\|\omega\|_{L^\infty_tL^{15}}\Big)^{\frac{2}{3}}\Big(\sum_{q\geq0}\int_0^t2^{\frac{3}{2}q}\frac{\|\Delta_q^hu(\tau)\|_{L^p}}{p^{\frac32}}\intd\tau\Big)^{\frac{1}{3}} +Ct\big(\|u\|_{L^\infty_tL^2}+\|\omega\|_{L^\infty_tL^4}\big)\nonumber\\
\leq& Ce^{\exp{Ct^{17}}}.
\end{align}
Note that
\begin{equation*}
\partial_zu=\partial_z(u_re_r+u_z e_z)=\partial_zu_re_r+\partial_zu_ze_{z}=-\omega_\theta e_r+\partial_ru_ze_r-(\partial_1u_1+\partial_2u_2)e_{z}.
\end{equation*}
Thus, by using \eqref{eq-add-pp} and \mbox{Proposition \ref{log}}, we end up with
\begin{align*}
\sup_{p\geq2}\int_0^t\frac{\norm{\nabla u(\tau)}_{L^p}}{\sqrt{p}}\mathrm{d}\tau
\leq &C\sup_{p\geq2}\int_0^t\frac{\norm{\nabla_{h} u(\tau)}_{L^p}}{\sqrt{p}}\intd\tau+C\int_0^t\sup_{p\geq2}\frac{\norm{\omega(\tau)}_{L^p}}{\sqrt{p}}\mathrm{d}\tau \nonumber\\ \leq&Ce^{\exp{Ct^{17}}}.
\end{align*}
This implies the desired result \eqref{Loglip}.
\end{proof}

\begin{proposition}\label{prop-pri}
Let $(\rho,u)$ be a smooth solution of the system \eqref{bs}.  Assume the initial data $(\rho_0,u_0 )$ satisfies the conditions stated in \mbox{Theorem \ref{global}}.
Then there exists a constant $C$ such that for all $t\in[0,T]$
\begin{equation}\label{high-es}
\norm{\nabla\rho(t)}^2_{L^2}+\|\partial_z\omega_\theta(t)\|^2_{L^2}+\int^{t}_{0}\norm{\nabla_h\partial_z\omega_\theta(\tau)}^{2}_{L^2}\mathrm{d}\tau+\int_0^t\Big\|\frac{\partial_z\omega_\theta(\tau)}{r}\Big\|^{2}_{L^2}\intd\tau
\leq C_3e^{e^{e^{\exp{C_3t^{17}}}}}.
\end{equation}
Moreover, we have
\begin{equation}\label{high-es-2}
\norm{\nabla u}_{L^1_tL^\infty}\leq C_4e^{e^{e^{\exp{C_4t^{17}}}}}.
\end{equation}
Here the positive constants $C_3$ and $C_4$ depend on the initial data $(u_0,\rho_0)$
\end{proposition}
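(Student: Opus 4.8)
The plan is to close the highest-order estimate by coupling three quantities that are genuinely intertwined: an $H^1$-bound for $\rho$, an anisotropic $H^1$-type bound for the vorticity carried by $\partial_z\omega_\theta$, and a control of $\int_0^t\norm{\nabla u(\tau)}_{L^\infty}\intd\tau$. First I would differentiate the transport equation: $\partial_i\rho$ satisfies $\partial_t\partial_i\rho+u\cdot\nabla\partial_i\rho=-\partial_iu_j\,\partial_j\rho$, so the $L^2$-energy method and the divergence-free condition give $\frac{\mathrm{d}}{\mathrm{d}t}\norm{\nabla\rho(t)}_{L^2}^2\leq 2\norm{\nabla u(t)}_{L^\infty}\norm{\nabla\rho(t)}_{L^2}^2$, whence $\norm{\nabla\rho(t)}_{L^2}^2\leq\norm{\nabla\rho_0}_{L^2}^2\exp\big(2\int_0^t\norm{\nabla u(\tau)}_{L^\infty}\intd\tau\big)$. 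Thus the density part is reduced to a control of $\int_0^t\norm{\nabla u}_{L^\infty}$, once the vorticity piece is handled.

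Next I would apply $\partial_z$ to \eqref{tourbillon}. Since $r$ is independent of $z$, the operators $\Delta_h$ and $r^{-2}$ commute with $\partial_z$, giving $\big(\partial_t+u\cdot\nabla\big)\partial_z\omega_\theta-\Delta_h\partial_z\omega_\theta+\frac{\partial_z\omega_\theta}{r^2}=-\partial_z u\cdot\nabla\omega_\theta+\partial_z\big(\frac{u_r}{r}\omega_\theta\big)-\partial_z\partial_r\rho$. Taking the $L^2$ inner product with $\partial_z\omega_\theta$ yields on the left the dissipation $\norm{\nabla_h\partial_z\omega_\theta}_{L^2}^2+\norm{\partial_z\omega_\theta/r}_{L^2}^2$. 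For the stretching term I would use $\partial_z u=-\omega_\theta e_r+\partial_r u_z e_r-(\partial_1u_1+\partial_2u_2)e_z$: the part $\int\partial_z u_z(\partial_z\omega_\theta)^2$ is bounded by $\norm{\nabla u}_{L^\infty}\norm{\partial_z\omega_\theta}_{L^2}^2$, while the remaining cubic pieces are treated by the anisotropic inequality of Lemma \ref{lema.2} and Young, absorbing a fraction of $\norm{\nabla_h\partial_z\omega_\theta}_{L^2}^2$. The term $\int\partial_z(\frac{u_r}{r}\omega_\theta)\partial_z\omega_\theta$ is controlled through $\norm{u_r/r}_{L^\infty}$ from \eqref{bs-2} and the pointwise bound \eqref{eq-ds}, and the density forcing $-\int\partial_z\partial_r\rho\,\partial_z\omega_\theta$ is integrated by parts in $(r,z)$ and bounded by $\norm{\nabla\rho}_{L^2}\big(\norm{\nabla_h\partial_z\omega_\theta}_{L^2}+\norm{\partial_z\omega_\theta/r}_{L^2}\big)$, again absorbing the gradient factors. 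A Gronwall argument then gives $\norm{\partial_z\omega_\theta(t)}_{L^2}^2+\int_0^t(\norm{\nabla_h\partial_z\omega_\theta}_{L^2}^2+\norm{\partial_z\omega_\theta/r}_{L^2}^2)\lesssim\big(\text{data}+\int_0^t\norm{\nabla\rho}_{L^2}^2\big)\exp\big(C\int_0^t(\norm{\nabla u}_{L^\infty}+\text{lower order})\big)$, the lower-order coefficients being $L^1_t$- or $L^2_t$-integrable by Propositions \ref{energy}, \ref{prop-cruc} and \ref{biot-s}.

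The crucial step is to control $\int_0^t\norm{\nabla u}_{L^\infty}$ without circularity. I would invoke the space-time logarithmic inequality \eqref{LOG-E} with $f=\nabla u$, $a=\frac12$ and indices $(s,p,q)$ satisfying $s>\frac3p$. Its first ingredient, $\sup_{j\geq2}\int_0^t\frac{\norm{S_j\nabla u}_{L^\infty}}{\sqrt j}\intd\tau$, is finite by Bernstein (choosing $p=j$ so that $\norm{S_j\nabla u}_{L^\infty}\lesssim\norm{\nabla u}_{L^j}$) together with \eqref{Loglip} from Proposition \ref{prop-fine}, hence bounded by $Ce^{\exp Ct^{17}}$; its second ingredient, $\norm{\nabla u}_{L^1_t(B_{p,q}^s)}$, I would bound polynomially in the quantities of \eqref{high-es} via Biot–Savart (Riesz boundedness on Besov spaces) and the anisotropic regularity of $\omega$ carried by $\nabla_h\omega$ and $\partial_z\omega$, through the embeddings of Lemmas \ref{properties} and \ref{lem-anstro}. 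Writing $\Phi(t):=e+\norm{\partial_z\omega_\theta(t)}_{L^2}^2+\norm{\nabla\rho(t)}_{L^2}^2+\int_0^t(\norm{\nabla_h\partial_z\omega_\theta}_{L^2}^2+\norm{\partial_z\omega_\theta/r}_{L^2}^2)$, inequality \eqref{LOG-E} yields $\int_0^t\norm{\nabla u}_{L^\infty}\lesssim 1+Ce^{\exp Ct^{17}}\big(\log\Phi(t)\big)^{1/2}$. Feeding this into the exponents of the $\nabla\rho$ and $\partial_z\omega_\theta$ bounds produces a self-referential inequality $\log\Phi\lesssim Ce^{\exp Ct^{17}}\sqrt{\log\Phi}$; solving for $\sqrt{\log\Phi}$ gives $\log\Phi\lesssim Ce^{2\exp Ct^{17}}$, hence the multiple-exponential bound \eqref{high-es}, and substituting back into \eqref{LOG-E} gives \eqref{high-es-2}.

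The main obstacle is exactly this closing step. The energy estimate for $\partial_z\omega_\theta$ unavoidably carries the coefficient $\norm{\nabla u}_{L^\infty}$, which is not a priori integrable, while the only available control of $\int_0^t\norm{\nabla u}_{L^\infty}$ passes through a logarithm of the very norm $\Phi$ one wants to bound. It is the half-power $(\log\Phi)^{1/2}$ in \eqref{LOG-E} — rather than a full power of $\log\Phi$ — that turns the self-referential inequality into $w^2\lesssim A w$ with $w=\sqrt{\log\Phi}$ and $A=Ce^{\exp Ct^{17}}$, which is solvable and breaks the circular dependence; a crude $\mathbb{L}^{\frac32}$-type control would be too weak here, which is precisely why the sharper space-time estimate \eqref{Loglip} of Proposition \ref{prop-fine} is needed. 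Secondary care is required to keep the intermediate coefficients $\norm{u_r/r}_{L^\infty}$ and the cubic vorticity terms integrable in time, so that they contribute only the already-established double-exponential factors rather than new instances of $\norm{\nabla u}_{L^\infty}$.
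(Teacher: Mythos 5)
Your overall architecture does match the paper's: energy estimates for $\nabla\rho$ and $\partial_z\omega_\theta$ whose Gronwall exponents carry $\int_0^t\|\nabla u(\tau)\|_{L^\infty}\intd\tau$, the space--time logarithmic inequality \eqref{LOG-E} fed by \eqref{Loglip}, and a self-referential inequality closed thanks to the half power of the logarithm. The energy part is essentially sound (the paper, unlike you, avoids $\|\nabla u\|_{L^\infty}$ as a Gronwall coefficient in the $\partial_z\omega_\theta$ estimate by using incompressibility, integration by parts and $\|u\|_{L^\infty}\lesssim\|\omega\|_{\sqrt{\mathbb{L}}}$, but this is not essential given your closing scheme), and your first ingredient (Bernstein with $p=j$ plus Proposition \ref{prop-fine}) is exactly right. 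The genuine gap is in the second ingredient: your claim that $\|\nabla u\|_{L^1_t(B^s_{p,q})}$ with $s>3/p$ can be bounded polynomially in the quantities of \eqref{high-es} ``via Biot--Savart and the anisotropic regularity of $\omega$ carried by $\nabla_h\omega$ and $\partial_z\omega$, through Lemmas \ref{properties} and \ref{lem-anstro}.'' This works for the vertical component $\partial_z u$ --- that is precisely \eqref{eq.7.4} in the paper --- but it fails for the horizontal component $\nabla_h u$. Running $\nabla_h u$ through the $L^2$-based anisotropic embedding of Lemma \ref{lem-anstro} forces a term of the form $\|\Lambda_h^{1+\epsilon'}\Lambda^{\epsilon}\nabla_h u\|_{L^2}$, which by Biot--Savart amounts to $\|\Lambda_h^{\epsilon+\epsilon'}\nabla_h\omega\|_{L^2}$: strictly more than one horizontal derivative of $\omega$ in $L^2$. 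None of the controlled quantities --- $\nabla_h\omega,\ \nabla_h(\omega_\theta/r)\in L^2_tL^2$, $\partial_z\omega_\theta\in L^\infty_tL^2$, $\nabla_h\partial_z\omega_\theta\in L^2_tL^2$, $\omega\in L^\infty_t\sqrt{\mathbb{L}}$ --- contains any horizontal regularity beyond $\nabla_h\omega$, and no interpolation among them can manufacture the extra $\Lambda_h^{\epsilon+\epsilon'}$, since all the surplus regularity is in the vertical direction. So the polynomial bound your bootstrap needs cannot be established with the tools you cite, and the quadratic inequality in $\sqrt{\log\Phi}$ never gets off the ground.

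The paper bridges exactly this point with Lemma \ref{smoothing}, i.e.\ the maximal smoothing effect of the horizontal heat flow in the velocity equation, which gains two horizontal derivatives in $L^1_t$; this produces \eqref{eq.7.6} and \eqref{eq.8.1}, where $\|\nabla_h u\|_{L^1_tB^{\epsilon}_{\infty,\infty}}$ is bounded by already-known quantities plus fractional powers of the dissipation integrals, which are then absorbed into the left-hand side before Gronwall. (A secondary difference: the paper applies Young's inequality inside the logarithmic estimate so that, after exponentiation, the bound is \emph{linear} in $\|\nabla u\|_{L^1_tB^{\epsilon}_{\infty,\infty}}$ and closes by absorption, rather than by solving your quadratic relation; either closing mechanism is fine once the Besov bound is in hand.) To repair your proof you must invoke the parabolic regularization of $u$ in the horizontal variables --- purely kinematic Biot--Savart and embedding arguments cannot supply the $\epsilon$ extra horizontal derivatives.
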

\begin{proof}
Applying the operator $\partial_z$ to \eqref{tourbillon} yields
\begin{equation}\label{partial-z-tourbillon}
\bigl(\partial_t +u\cdot\nabla\bigr)\partial_z\omega_\theta-\Delta_{h}\partial_z\omega_\theta
+\frac{\partial_z\omega_\theta}{r^2} =-\partial_z\partial_{r}\rho+\partial_z\Big(\frac{u_r}{r}\Big)\omega_\theta+\frac{u_r}{r}\partial_z\omega_\theta-\partial_zu_r\partial_r\omega_\theta
-\partial_zu_z\partial_z\omega_\theta.
\end{equation}
By the standard energy method, one can conclude that
\begin{align*}
&\frac12\dtd\|\partial_z\omega_\theta(t)\|^{2}_{L^2}+\|\nabla_h\partial_z\omega_\theta(t)\|^{2}_{L^2}+\Big\|\frac{\partial_z\omega_\theta(t)}{r}\Big\|^{2}_{L^2}\nonumber\\
=&-\int_{\mathbb{R}^3}\partial_z\partial_{r}\rho\partial_z\omega_\theta\intd x+\int_{\mathbb{R}^3}\partial_z\Big(\frac{u_r}{r}\Big)\omega_\theta\partial_z\omega_\theta\intd x+\int_{\mathbb{R}^3}\frac{u_r}{r}\partial_z\omega_\theta\partial_z\omega_\theta\intd x\nonumber\\
&-\int_{\mathbb{R}^3}\partial_zu_r\partial_r\omega_\theta\partial_z\omega_\theta\intd x-\int_{\mathbb{R}^3}\partial_zu_z\partial_z\omega_\theta\partial_z\omega_\theta\intd x\nonumber\\
:=&I_1+I_2+I_3+I_4+I_5.
\end{align*}
Since the support of $\rho$ ensures
\begin{align*}
-\int_{\mathbb{R}^3}\partial_z\partial_{r}\rho\partial_z\omega_\theta\intd x=&-2\pi\int_{0}^\infty\int_{\mathbb{R}}\partial_z\partial_{r}\rho\partial_z\omega_\theta r\intd r\intd z\nonumber\\
=&2\pi\int_{0}^\infty\int_{\mathbb{R}}\partial_z\rho\partial_z\omega_\theta \intd r\intd z+2\pi\int_{0}^\infty\int_{\mathbb{R}}\partial_z\rho\partial_r\partial_z\omega_\theta r\intd r\intd z\nonumber\\
=&\int_{\mathbb{R}^{3}}\partial_z\rho\frac{\partial_z\omega_\theta}{r} \intd x+\int_{\mathbb{R}^{3}}\partial_z\rho{\partial_r\partial_z\omega_\theta} \intd x,
\end{align*}
then the first term $I_1$ may be bounded by
\begin{align*}
&\|\partial_z\rho\|_{L^2}\Big\|\frac{\partial_z\omega_\theta}{r}\Big\|_{L^2}+\|\partial_z\rho\|_{L^2}\|\partial_r\partial_z\omega_\theta\|_{L^2}\nonumber\\
\leq&2\|\partial_z\rho\|^{2}_{L^2}+\frac14\Big(\Big\|\frac{\partial_z\omega_\theta}{r}\Big\|^{2}_{L^2}+\|\partial_r\partial_z\omega_\theta\|^{2}_{L^2}\Big).
\end{align*}
We observe that
\begin{align*}
\int_{\mathbb{R}^3}\partial_z\Big(\frac{u_r}{r}\Big)\omega_\theta\partial_z\omega_\theta\intd x=&\frac12\int_{\mathbb{R}^3}\partial_z\Big(\frac{u_r}{r}\Big)\partial_z(\omega_\theta)^2\intd x\nonumber\\
=&-\frac12\int_{\mathbb{R}^3}\partial^{2}_z\Big(\frac{u_r}{r}\Big)(\omega_\theta)^2\intd x.
\end{align*}
Therefore, by the H\"older inequality, the Young inequality and Lemma \ref{prop-identity}, we obtain
\begin{align*}
I_2\leq&\Big\|\partial^{2}_z\Big(\frac{u_r}{r}\Big)\Big\|_{L^2}\|\omega_\theta\|_{L^4}^2\nonumber\\
\leq& C\Big\|\partial_z\Big(\frac{\omega_\theta}{r}\Big)\Big\|_{L^2}\|\omega_\theta\|_{\sqrt{\mathbb{L}}}^2\nonumber\\
\leq& C\|\omega_\theta\|_{\sqrt{\mathbb{L}}}^4+\frac18\Big\|\partial_z\Big(\frac{\omega_\theta}{r}\Big)\Big\|^2_{L^2}.
\end{align*}
By the H\"older inequality and Proposition \ref{biot-s}, we have
\begin{equation*}
I_3\leq\Big \|\frac{u_r}{r}\Big\|_{L^\infty}\|\partial_z\omega_\theta\|^{2}_{L^2}\leq C \Big\|\frac{\omega_\theta}{r}\Big\|^{\frac{1}{2}}_{L^2}\Big\|\nabla_h\Big(\frac{\omega_\theta}{r}\Big)\Big\|^{\frac{1}{2}}_{L^2}\|\partial_z\omega_\theta\|^{2}_{L^2}.
\end{equation*}
As for the term $I_4$, we observe that
\begin{align*}
-\int_{\mathbb{R}^3}\partial_zu_r\partial_r\omega_\theta\partial_z\omega_\theta\intd x=&\int_{\mathbb{R}^3}\omega_\theta\partial_r\omega_\theta\partial_z\omega_\theta\intd x-\int_{\mathbb{R}^3}\partial_ru_z\partial_r\omega_\theta\partial_z\omega_\theta\intd x\nonumber\\
=&-\frac12\int_{\mathbb{R}^3}\omega_\theta^2\Big(\partial_r\partial_z\omega_\theta+\frac{\partial_z\omega_\theta}{r}\Big)\intd x-\int_{\mathbb{R}^3}\partial_ru_z\partial_r\omega_\theta\partial_z\omega_\theta\intd x\nonumber\\
:=&I_{4}^1+I^2_4.
\end{align*}
By the H\"older inequality and the Young inequality, we get
\begin{align*}
I_{4}^1\leq&\frac12\|\omega_\theta\|_{L^4}^2\Big\|\Big(\partial_r\partial_z\omega_\theta+\frac{\partial_z\omega_\theta}{r}\Big)\Big\|_{L^2}\nonumber\\
\leq&C\|\omega_\theta\|_{L^4}^4+\frac{1}{16}\Big(\|\nabla_h\partial_z\omega_\theta\|^{2}_{L^2}+\Big\|\frac{\partial_z\omega_\theta}{r}\Big\|^{2}_{L^2}\Big)\nonumber\\
\leq&C\|\omega_\theta\|_{\sqrt{\mathbb{L}}}^4+\frac{1}{16}\Big(\|\nabla_h\partial_z\omega_\theta\|^{2}_{L^2}+\Big\|\frac{\partial_z\omega_\theta}{r}\Big\|^{2}_{L^2}\Big).
\end{align*}
In the light of  Lemma \ref{lema.2}, we have
\begin{align*}
I_4^{2}\leq & C\|\partial_ru_z\|^{\frac{1}{2}}_{L^2}\|\nabla_h\partial_ru_z\|^{\frac{1}{2}}_{L^2}\|\partial_r\omega_\theta\|^{\frac{1}{2}}_{L^2}\|\partial_r\partial_z\omega_\theta\|^{\frac{1}{2}}_{L^2}
\|\partial_z\omega_\theta\|^{\frac{1}{2}}_{L^2}\|\nabla_h\partial_z\omega_\theta\|^{\frac{1}{2}}_{L^2}\nonumber\\
\leq&C\|\omega\|^{\frac{1}{2}}_{L^2}\|\nabla_h\omega\|_{L^2}\|\nabla_h\partial_z\omega_\theta\|_{L^2}
\|\partial_z\omega_\theta\|^{\frac{1}{2}}_{L^2}\nonumber\\
\leq&C\|\omega\|_{L^2}\|\nabla_h\omega\|^{2}_{L^2}\|\partial_z\omega_\theta\|_{L^2}+\frac18\|\nabla_h\partial_z\omega_\theta\|^{2}_{L^2}
\nonumber\\
\leq&C\|\omega\|^2_{L^2}\|\nabla_h\omega\|^{2}_{L^2}+C\|\nabla_h\omega\|^{2}_{L^2}\|\partial_z\omega_\theta\|^2_{L^2}+\frac18\|\nabla_h\partial_z\omega_\theta\|^{2}_{L^2}.
\end{align*}
For the last term $I_5$, the incompressible condition that $\partial_ru_r+\frac{u_r}{r}+\partial_zu_z=0$ guarantees that
\begin{equation*}
I_5=\int_{\mathbb{R}^3}\partial_ru_r\partial_z\omega_\theta\partial_z\omega_\theta\intd x+\int_{\mathbb{R}^3}\frac{u_r}{r}\partial_z\omega_\theta\partial_z\omega_\theta\intd x.
\end{equation*}
Integrating by parts gives
\begin{align*}
\int_{\mathbb{R}^3}\partial_ru_r\partial_z\omega_\theta\partial_z\omega_\theta\intd x=&2\pi\int_0^\infty\int_{\RR}\partial_ru_r\partial_z\omega_\theta\partial_z\omega_\theta r\intd r\intd z\nonumber\\
=&-2\pi\int_0^\infty\int_{\RR}u_r\partial_z\omega_\theta\partial_z\omega_\theta \intd r\intd z-4\pi\int_0^\infty\int_{\RR}u_r\partial_r\partial_z\omega_\theta\partial_z\omega_\theta r\intd r\intd z\nonumber\\
=&-\int_{\mathbb{R}^3}\frac{u_r}{r}\partial_z\omega_\theta\partial_z\omega_\theta\intd x-2\int_{\RR^3}u_r\partial_r\partial_z\omega_\theta\partial_z\omega_\theta\intd x.
\end{align*}
Consequently, the H\"older inequality and the Young inequality enable us to conclude that
\begin{align*}
I_5=&-2\int_{\RR^3}u_r\partial_r\partial_z\omega_\theta\partial_z\omega_\theta\intd x\nonumber\\
\leq &2\|u\|_{L^\infty}\|\partial_z\omega_\theta\|_{L^2}\|\nabla_h\partial_z\omega_\theta\|_{L^2}\nonumber\\
\leq&C\|\omega\|_{\sqrt{\mathbb{L}}}^2\|\partial_z\omega_\theta\|^{2}_{L^2}+\frac18\|\nabla_h\partial_z\omega_\theta\|^{2}_{L^2}.
\end{align*}
Here we have used the fact that
\begin{align*}
\|u\|_{L^\infty}\leq\sum_{q\geq-1}\|\Delta_qu\|_{L^\infty}\leq&C\sum_{q\geq-1}2^{\frac{1}{2}q}\|\Delta_qu\|_{L^6}\nonumber\\
\leq&C\|u\|_{L^6}+C\sum_{q\geq0}2^{-\frac{1}{2}q}\|\Delta_q\nabla u\|_{L^6}\nonumber\\
\leq&C\big(\|\omega\|_{L^2}+\|\omega\|_{L^6}\big)\leq C\|\omega\|_{\sqrt{\mathbb{L}}}.
\end{align*}
Gathering these estimates, we finally obtain that
\begin{align}\label{eq.v-e}
&\dtd\|\partial_z\omega_\theta(t)\|^{2}_{L^2}+\|\nabla_h\partial_z\omega_\theta(t)\|^{2}_{L^2}
+\Big\|\frac{\partial_z\omega_\theta(t)}{r}\Big\|^{2}_{L^2}\nonumber\\
\leq&C\Big(\Big\|\frac{\omega_\theta}{r}\Big\|^{\frac{1}{2}}_{L^2}
\Big\|\nabla_h\Big(\frac{\omega_\theta}{r}\Big)\Big\|^{\frac{1}{2}}_{L^2}
+\|\nabla_h\omega\|^{2}_{L^2}+\|\omega\|_{\sqrt{\mathbb{L}}}^2\Big)\|\partial_z\omega_\theta\|^{2}_{L^2}\nonumber\\
&+C\Big(\|\omega\|^2_{L^2}\|\nabla_h\omega\|^{2}_{L^2}+\|\omega\|^4_{\sqrt{\mathbb{L}}}\Big)+C\|\nabla\rho\|_{L^2}^2.
\end{align}
Next, applying the differential operator $\partial_i$ to the density equation with $i=1,2,3$, one has
\begin{equation*}
\bigl(\partial_t+u\cdot\nabla\bigr)\partial_i\rho=-\partial_iu_r\partial_r\rho-\partial_iu_z\partial_z\rho_i.
\end{equation*}
Taking $L^2$-inner product of the above equation with $\partial_i\rho$, we immediately obtain that
\begin{align*}
\frac12\dtd \|\partial_i\rho(t)\|_{L^2}^2=&-\int_{\mathbb{R}^3}\partial_iu_r\partial_r\rho\partial_i\rho\intd x-\int_{\mathbb{R}^3}\partial_iu_z\partial_z\rho\partial_i\rho\intd x\nonumber\\
\leq&2\|\nabla u\|_{L^\infty}\|\nabla\rho\|^{2}_{L^2}.
\end{align*}
This implies that
\begin{equation*}
\dtd \|\nabla\rho(t)\|_{L^2}^2\leq C\|\nabla u\|_{L^\infty}\|\nabla\rho\|^{2}_{L^2}.
\end{equation*}
The Gronwall inequality provides
\begin{equation}\label{eq.exprho}
\|\nabla\rho(t)\|_{L^2}^2\leq \|\nabla\rho_0\|^{2}_{L^2}e^{C\int_0^t\|\nabla u(\tau)\|_{L^\infty}\intd\tau}.
\end{equation}
On the other hand, by taking advantage of Lemma \ref{log}, we know that for all $\epsilon\in]0,\frac{1}{16}[,$
\begin{align*}
\int^{t}_{0}\norm{\nabla u(\tau)}_{L^\infty(\mathbb{R}^3)}\mathrm{d}t\leq& C+C\sup_{2\leq j<\infty}\int^{t}_{0}\frac{\norm{S_{j}\nabla u(\tau)}_{L^{\infty}(\mathbb{R}^3)}}{\sqrt{j}}\mathrm{d}t
\Big(\log\big(e+\norm{\nabla u}_{L^{1}_{t}(B_{\infty,\infty}^{\epsilon}(\mathbb{R}^3))}\big)\Big)^{\frac12}\\
\leq& C+C\Big(\sup_{2\leq p<\infty}\int^{t}_{0}\frac{\norm{\nabla u(\tau)}_{L^{p}(\mathbb{R}^3)}}{\sqrt{p}}\mathrm{d}t\Big)^2
+\log\big(e+\norm{\nabla u}_{L^{1}_{t}(B_{\infty,\infty}^{\epsilon}(\mathbb{R}^3))}\big).
\end{align*}
Inserting this into \eqref{eq.exprho}, we get
\begin{align}\label{eq.nabla-rho}
\|\nabla\rho(t)\|_{L^2}^2\leq& \|\nabla\rho_0\|^{2}_{L^2}e^{C}e^{Ch(t)}\big(e+\norm{\nabla u}_{L^{1}_{t}(B_{\infty,\infty}^{\epsilon}(\mathbb{R}^3))}\big)\nonumber\\
\leq&C \|\nabla\rho_0\|^{2}_{L^2}e^{Ch(t)}+C \|\nabla\rho_0\|^{2}_{L^2}e^{Ch(t)}\norm{\nabla u}_{L^{1}_{t}(B_{\infty,\infty}^{\epsilon})}\nonumber\\
\leq&Ce^{Ch(t)}+C e^{Ch(t)}\norm{\nabla u}_{L^{1}_{t}(B_{\infty,\infty}^{\epsilon})},
\end{align}
where $\sqrt{h(t)}:=\sup_{2\leq p<\infty}\int^{t}_{0}\frac{\norm{\nabla u(\tau)}_{L^{p}}}{\sqrt{p}}\mathrm{d}\tau.$

Putting this estimate together with \eqref{eq.v-e} yields
\begin{align}\label{eq.7.2}
&\dtd\|\partial_z\omega_\theta(t)\|^{2}_{L^2}+\|\nabla_h\partial_z\omega_\theta(t)\|^{2}_{L^2}
+\Big\|\frac{\partial_z\omega_\theta(t)}{r}\Big\|^{2}_{L^2}\nonumber\\
\leq&C\Big(\Big\|\frac{\omega_\theta}{r}\Big\|^{\frac{1}{2}}_{L^2}
\Big\|\nabla_h\Big(\frac{\omega_\theta}{r}\Big)\Big\|^{\frac{1}{2}}_{L^2}
+\|\nabla_h\omega\|^{2}_{L^2}+\|\omega\|_{\sqrt{\mathbb{L}}}^2\Big)\|\partial_z\omega_\theta\|^{2}_{L^2}\nonumber\\
&+C\Big(\|\omega\|^2_{L^2}\|\nabla_h\omega\|^{2}_{L^2}+\|\omega\|^4_{\sqrt{\mathbb{L}}}+Ce^{Ch(t)}\Big)+C e^{Ch(t)}\norm{\nabla u}_{L^{1}_{t}(B_{\infty,\infty}^{\epsilon})}.
\end{align}
Employing the H\"older inequality,   \mbox{Proposition \ref{prop-cruc}} and \mbox{Proposition \ref{log}}, we get that
\begin{align*}
&\int_0^t\Big(\Big\|\frac{\omega_\theta}{r}(\tau)\Big\|^{\frac{1}{2}}_{L^2}
\Big\|\nabla_h\Big(\frac{\omega_\theta}{r}\Big)(\tau)\Big\|^{\frac{1}{2}}_{L^2}
+\|\nabla_h\omega(\tau)\|^{2}_{L^2}+\|\omega(\tau)\|_{\sqrt{\mathbb{L}}}^2\Big)\intd\tau\nonumber\\
\leq&\sqrt{t}\Big\|\frac{\omega_\theta}{r}\Big\|^{\frac{1}{2}}_{L^\infty_tL^2}
\Big\|\nabla_h\Big(\frac{\omega_\theta}{r}\Big)\Big\|^{\frac{1}{2}}_{L^2_tL^2}
+\|\nabla_h\omega\|^{2}_{L^2_tL^2}+t\|\omega\|_{L^\infty_t\sqrt{\mathbb{L}}}^2\nonumber\\
\leq&Ce^{\exp{Ct^{17}}}.
\end{align*}
By using the Gronwall inequality,   \mbox{Proposition \ref{prop-cruc}}, \mbox{Proposition \ref{log}} and \mbox{Proposition \ref{prop-fine}}, we readily obtain
\begin{align}\label{eq.7.3}
&\|\partial_z\omega_\theta(t)\|^{2}_{L^2}+\int_0^t\|\nabla_h\partial_z\omega_\theta(\tau)\|^{2}_{L^2}\intd\tau
+\int_0^t\Big\|\frac{\partial_z\omega_\theta(\tau)}{r}\Big\|^{2}_{L^2}\intd\tau\nonumber\\
\leq& Ce^{e^{\exp{Ct^{17}}}}\left(\|\partial_z\omega_\theta(0)\|^{2}_{L^2}+\|\omega\|^2_{L^\infty_tL^2}\|\nabla_h\omega\|^{2}_{L^2_tL^2}
+t\|\omega\|^4_{L^\infty_t\sqrt{\mathbb{L}}}+Cte^{Ch(t)}+Cte^{Ch(t)}\norm{\nabla u}_{L^{1}_{t}(B_{\infty,\infty}^{\epsilon})}\right)\nonumber\\
\leq& Ce^{e^{\exp{Ct^{17}}}}+Ce^{e^{\exp{Ct^{17}}}}\norm{\nabla u}_{L^{1}_{t}(B_{\infty,\infty}^{\epsilon})}.
\end{align}
This together with \eqref{eq.nabla-rho} and \mbox{Proposition \ref{log}} yields
\begin{align}\label{eq.m-n}
&\|\partial_z\omega_\theta(t)\|^{2}_{L^2}+\|\nabla\rho(t)\|_{L^2}^2+\int_0^t\|\nabla_h\partial_z\omega_\theta(\tau)\|^{2}_{L^2}\intd\tau
+\int_0^t\Big\|\frac{\partial_z\omega_\theta(\tau)}{r}\Big\|^{2}_{L^2}\intd\tau\nonumber\\
\leq& Ce^{e^{\exp{Ct^{17}}}}+Ce^{e^{\exp{Ct^{17}}}}\norm{\nabla u}_{L^{1}_{t}(B_{\infty,\infty}^{\epsilon})}.
\end{align}
Now, we tackle with the integral term $\int_0^t\norm{\nabla u(\tau)}_{B_{\infty,\infty}^{\epsilon}}\intd\tau$. It is clear that
\begin{align}\label{eq-split}
\norm{\nabla u(t)}_{B_{\infty,\infty}^{\epsilon}}\leq \norm{\partial_zu(t)}_{B_{\infty,\infty}^{\epsilon}}+\norm{\nabla_h u(t)}_{B_{\infty,\infty}^{\epsilon}}.
\end{align}
On the one hand, with the help of Lemma \ref{lem-anstro} and the fact that $\omega=\omega_\theta e_\theta$, one can infer that for $\epsilon\in]0,\frac{1}{16}[,$
\begin{align*}
&\norm{\partial_zu(\tau)}_{B_{\infty,\infty}^{\epsilon}}\\
\leq&C\|\partial_zu\|_{L^2}+C\|\Lambda^\epsilon \partial_zu\|_{L^\infty}\nonumber\\
\leq&C\|\partial_zu\|_{L^2}+C\|\Lambda^\epsilon \partial_zu\|_{L^2}+C\|\Lambda^\epsilon \Lambda_h^{1+\epsilon}\partial_zu\|_{L^2}+C\|\Lambda_v^{1-\epsilon}\Lambda^{\epsilon} u\|_{L^2}+C\|\Lambda_v^{1-2\epsilon}\Lambda_h^{1+\epsilon}\Lambda^\epsilon \partial_zu\|_{L^2}\nonumber\\
\leq&C\|u\|_{H^1}+C\|\partial_z\omega\|_{L^2}+C\|\nabla_h \partial_z\omega\|_{L^2}\nonumber\\
\leq&C\|u\|_{H^1}+C\|\partial_z\omega_\theta\|_{L^2}+C\|\nabla_h \partial_z\omega_\theta\|_{L^2}+C\Big\| \partial_z\Big(\frac{\omega_\theta}{r}\Big)\Big\|_{L^2}.
\end{align*}
Therefore, we immediately obtain
\begin{align}\label{eq.7.4}
\norm{\partial_zu(\tau)}_{L^1_tB_{\infty,\infty}^{\epsilon}}\leq Ct\|u\|_{L^\infty_tH^1}+C\sqrt{t}\|\partial_z\omega\|_{L^2_tL^2}+C\sqrt{t}C\|\nabla_h \partial_z\omega_\theta\|_{L^2_tL^2}+C\sqrt{t}\Big\| \partial_z\Big(\frac{\omega_\theta}{r}\Big)\Big\|_{L^2_tL^2}.
\end{align}
Next, we need to introduce an useful lemma in order to tackle with another part $\norm{\nabla_h u(t)}_{B_{\infty,\infty}^{\epsilon}}$.
\begin{lemma}[\cite{MZ2012}]\label{smoothing}
Let $s_{1},s_{2}\in \RR$ and $p\in[2,\infty[$. Assume that $(\rho,u )$ be a smooth  solution  of the system \eqref{bs}, then there holds that
\begin{equation*}
\norm{u}_{L^{1}_{t}B^{s_{1}+2,s_{2}}_{p,1}}
\lesssim  \norm{u_{0}}_{B_{p,1}^{s_{1},s_{2}}}+\norm{u}_{L^{1}_{t}B_{p,1}^{s_{1},s_{2}}}+\norm{u\otimes u}_{L^{1}_{t}B_{p,1}^{s_{1}+1,s_{2}}\cap L^{1}_{t}B_{p,1}^{s_{1},s_{2}+1}}+\norm{\rho}_{L^{1}_{t}B_{p,1}^{s_{1},s_{2}}}.
\end{equation*}
\end{lemma}
With the help of Lemma \ref{smoothing} and the Bernstein inequality, the term $\norm{\nabla_h u(\tau)}_{B_{\infty,\infty}^{\epsilon}}$ can be bounded  as follows:
\begin{align}\label{eq.7.6}
\norm{\nabla_h u}_{L^1_tB_{\infty,\infty}^{\epsilon}}\leq&C\norm{u}_{L^{1}_{t}B^{1+\frac{2}{p}+\epsilon,\frac{1}{p}+\epsilon}_{p,1}}\nonumber\\
\leq&C\norm{u_{0}}_{B_{p,1}^{-1+\frac2p+\epsilon,\frac1p+\epsilon}}+\norm{u}_{L^{1}_{t}B_{p,1}^{-1+\frac2p+\epsilon,\frac1p+\epsilon}}+\norm{u\otimes u}_{L^{1}_{t}B_{p,1}^{\frac2p+\epsilon,\frac1p+\epsilon}\cap L^{1}_{t}B_{p,1}^{-1+\frac2p+\epsilon,1+\frac1p+\epsilon}}\nonumber\\
&+\norm{\rho}_{L^{1}_{t}B_{p,1}^{-1+\frac2p+\epsilon,\frac1p+\epsilon}}\nonumber\\
\leq&C\norm{u_{0}}_{B_{2,1}^{\epsilon,\frac12+\epsilon}}+\norm{u}_{L^{1}_{t}B_{2,1}^{\epsilon,\frac12+\epsilon}}+\norm{u\otimes u}_{L^{1}_{t}B_{2,1}^{1+\epsilon,\frac12+\epsilon}\cap L^{1}_{t}B_{2,1}^{\epsilon,\frac32+\epsilon}}\nonumber\\
&+\norm{\rho}_{L^{1}_{t}B_{2,1}^{\epsilon,\frac12+\epsilon}}\nonumber\\
\leq&C\norm{u_{0}}_{H^1}+\norm{u}_{L^{1}_{t}H^1}+\norm{u\otimes u}_{L^{1}_{t}B_{2,1}^{1+\epsilon,\frac12+\epsilon}\cap L^{1}_{t}B_{2,1}^{\epsilon,\frac32+\epsilon}}+\norm{\rho}_{L^{1}_{t}H^1}.
\end{align}
According to the Banach algebra property of Lemma \ref{properties}, one has that for $\epsilon\in]0,\frac{1}{16}[,$
\begin{align*}
\norm{u\otimes u}_{L^{1}_{t}B_{2,1}^{1+\epsilon,\frac12+\epsilon}}\leq C\norm{u}^{2}_{L^{2}_{t}H^{1+2\epsilon,\frac12+2\epsilon}}
\leq C\|u\|^{2}_{L^2_tH^1}+\|\nabla_h\omega\|^2_{L^2_tL^2}.
\end{align*}
On the other hand, Lemma \ref{properties} allows us to conclude that
\begin{align*}
\norm{u\otimes u}_{ L^{1}_{t}B_{2,1}^{\epsilon,\frac32+\epsilon}}\leq &C\norm{u\otimes u}_{ L^{1}_{t}H^{1+\epsilon,\frac32+2\epsilon}}\nonumber\\
\leq&C\norm{u}^2_{ L^{2}_{t}H^{1+\epsilon,\frac32+2\epsilon}}.
\end{align*}
Since the interpolation theorem and the fact that $\omega=\omega_\theta e_\theta$ guarantee that there exist $\beta\in]0,1[$ such that for $\epsilon\in]0,\frac{1}{16}[,$
\begin{align*}
\norm{u}_{H^{1+\epsilon,\frac32+2\epsilon}}\leq &C\|u\|_{L^2}+C\|\Lambda_h^{1+\epsilon}u\|_{L^2}+C\|\Lambda_v^{\frac32+2\epsilon}u\|_{L^2}+C\|\Lambda_h^{1+\epsilon}\Lambda_v^{\frac32+2\epsilon}u\|_{L^2}\nonumber\\
\leq&C\|u\|_{H^1}+C\|\nabla_h\omega\|_{L^2}+C\|\partial_z\omega\|_{L^2}
+C\|\Lambda_h^{1+\epsilon}\Lambda_v^{1-\epsilon}u\|^{\beta}_{L^2}\|\Lambda_h^{1+\epsilon}\Lambda_v^{2-\epsilon}u\|^{1-\beta}_{L^2}\nonumber\\
\leq&C\|u\|_{H^1}+C\|\nabla_h\omega\|_{L^2}+C\|\partial_z\omega_\theta\|_{L^2}
+C\|\nabla_h\omega\|^{\beta}_{L^2}\|\nabla_h\partial_z\omega\|^{1-\beta}_{L^2}\nonumber\\
\leq&C\|u\|_{H^1}+C\|\nabla_h\omega\|_{L^2}+C\|\partial_z\omega_\theta\|_{L^2}
+C\|\nabla_h\omega\|^{\beta}_{L^2}\|\nabla_h\partial_z\omega_\theta\|^{1-\beta}_{L^2}\nonumber\\
&+C\|\nabla_h\omega\|^{\beta}_{L^2}\Big\| \partial_z\Big(\frac{\omega_\theta}{r}\Big)\Big\|^{1-\beta}_{L^2}.
\end{align*}
Collecting these estimates together with \eqref{eq.7.6} yields that for $\epsilon\in]0,\frac{1}{16}[,$
\begin{align}\label{eq.8.1}
\norm{\nabla_h u}_{L^1_tB_{\infty,\infty}^{\epsilon}}\leq&C\norm{u_{0}}_{H^1}+C\norm{u}_{L^{1}_{t}H^1}+C\|u\|^2_{L^2_tH^1}+C\|\partial_z\omega_\theta\|^{2}_{L^2_tL^2}\nonumber\\
&+\|\nabla_h\omega\|^{2}_{L^2_tL^2}+\norm{\rho}_{L^{1}_{t}L^2}
+\|\partial_z\omega_\theta\|^{2}_{L^2_tL^2}+\|\nabla\rho\|_{L^1_tL^2}\nonumber\\
&+C\|\nabla_h\omega\|^{2\beta}_{L^2_tL^2}\|\nabla_h\partial_z\omega_\theta\|^{2(1-\beta)}_{L^2_tL^2}+C\|\nabla_h\omega\|^{2\beta}_{L^2}\Big\| \partial_z\Big(\frac{\omega_\theta}{r}\Big)\Big\|^{2(1-\beta)}_{L^2}.
\end{align}
Plugging \eqref{eq.7.4} and \eqref{eq.7.6} into \eqref{eq.m-n}, and using  \mbox{Proposition \ref{energy}},  \mbox{Proposition \ref{prop-cruc}} and \mbox{Proposition \ref{log}}, we finally obtain that
\begin{align*}
&\|\partial_z\omega_\theta(t)\|^{2}_{L^2}+\|\nabla\rho(t)\|_{L^2}^2+\int_0^t\|\nabla_h\partial_z\omega_\theta(\tau)\|^{2}_{L^2}\intd\tau
+\int_0^t\Big\|\frac{\partial_z\omega_\theta(\tau)}{r^{2}}\Big\|^{2}_{L^2}\intd\tau\nonumber\\
\leq&  Ce^{e^{\exp{Ct^{17}}}}+Ce^{e^{\exp{Ct^{17}}}}\Big(Ct\|u\|_{L^\infty_tH^1}+C\sqrt{t}\|\partial_z\omega\|_{L^2_tL^2}+C\sqrt{t}C\|\nabla_h \partial_z\omega_\theta\|_{L^2_tL^2}\nonumber\\
&+C\sqrt{t}\Big\| \partial_z\Big(\frac{\omega_\theta}{r}\Big)\Big\|_{L^2_tL^2}+C\norm{u_{0}}_{H^1}+C\norm{u}_{L^{1}_{t}H^1}+C\|u\|^2_{L^2_tH^1}+C\|\partial_z\omega_\theta\|^{2}_{L^2_tL^2}\\
&+\|\nabla_h\omega\|^{2}_{L^2_tL^2}+\norm{\rho}_{L^{1}_{t}L^2}
+\|\partial_z\omega_\theta\|^{2}_{L^2_tL^2}+\|\nabla\rho\|_{L^1_tL^2}\\
&+C\|\nabla_h\omega\|^{2\beta}_{L^2_tL^2}\|\nabla_h\partial_z\omega_\theta\|^{2(1-\beta)}_{L^2_tL^2}+C\|\nabla_h\omega\|^{2\beta}_{L^2}\Big\| \partial_z\Big(\frac{\omega_\theta}{r}\Big)\Big\|^{2(1-\beta)}_{L^2}\Big)\\
\leq&
Ce^{e^{\exp{Ct^{17}}}}+Ce^{e^{\exp{Ct^{17}}}}\int_0^t\|\partial_z\omega_\theta(\tau)\|^{2}_{L^2}\intd\tau+C\int_0^t\|\nabla\rho(\tau)\|_{L^2}^2\intd\tau+
\frac14\int_0^t\|\nabla_h\partial_z\omega_\theta(\tau)\|^{2}_{L^2}\intd\tau
\\&+\frac14\int_0^t\Big\|\frac{\partial_z\omega_\theta(\tau)}{r^{2}}\Big\|^{2}_{L^2}\intd\tau.
\end{align*}
This implies
\begin{align*}
\|\partial_z\omega_\theta(t)\|^{2}_{L^2}+\|\nabla\rho(t)\|_{L^2}^2\leq Ce^{e^{\exp{Ct^{17}}}}+Ce^{e^{\exp{Ct^{17}}}}\int_0^t\|\partial_z\omega_\theta(\tau)\|^{2}_{L^2}\intd\tau+C\int_0^t\|\nabla\rho(\tau)\|^2_{L^2}\intd\tau.
\end{align*}
Thus the Gronwall inequality enables us to get the desired result \eqref{high-es}. In addition, from \eqref{eq.7.4} and \eqref{eq.8.1}, we can obtain the second desired result \eqref{high-es-2} in view of \mbox{Proposition \ref{energy}}, \mbox{Proposition \ref{prop-cruc}} and \mbox{Proposition \ref{prop-pri}}.
\end{proof}
\section{Proof of Theorem \ref{global}}\label{Sec-4}
\setcounter{section}{4}\setcounter{equation}{0}
In this section, we restrict our attention to show Theorem \ref{global}. Before proving, we first give an useful proposition which palys an important role in proof of Theorem \ref{global}.
\begin{proposition}\label{res-h}
Let $(u_0,\rho_0)\in H^{s+1}\times H^{s}$ with $s>\frac32$. Assume that $\text{\rm div}u_0=0$. Then \eqref{bs} admits a unique global solution $(\rho,u)$ satisfying $u\in C(\RR^+;H^{s+1})$ and $\rho\in C(\RR^+;H^{s})$.
\end{proposition}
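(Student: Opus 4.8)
The plan is to establish Proposition \ref{res-h} in two stages: a local-in-time construction in the high-regularity class $H^{s+1}\times H^{s}$ together with a continuation criterion, and then a globalization step that invokes the a priori Lipschitz bound already secured in Section \ref{Sec-3}.

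First I would construct a local solution by a standard approximation scheme (Friedrichs mollifiers, or a spectral truncation), producing approximate solutions whose limit solves \eqref{bs}. The crux is a uniform energy estimate in $H^{s+1}\times H^{s}$. Applying $\Lambda^{s+1}$ to the velocity equation and $\Lambda^{s}$ to the density equation, taking $L^2$ inner products and using the divergence-free condition to eliminate the pressure, one is led to
\begin{equation*}
\frac{\mathrm{d}}{\mathrm{d}t}\big(\|u\|_{H^{s+1}}^2+\|\rho\|_{H^{s}}^2\big)+2\|\nabla_h u\|_{H^{s+1}}^2\leq C\big(1+\|\nabla u\|_{L^\infty}\big)\big(\|u\|_{H^{s+1}}^2+\|\rho\|_{H^{s}}^2\big),
\end{equation*}
where the convective terms are controlled through the commutator (Moser) estimates behind Lemma \ref{lem2.2}, using that $s>\tfrac32$ yields $H^{s+1}\hookrightarrow W^{1,\infty}$ and hence $\|\nabla u\|_{L^\infty}\lesssim\|u\|_{H^{s+1}}$. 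The only delicate point is the buoyancy coupling $\langle\Lambda^{s+1}(\rho e_3),\Lambda^{s+1}u\rangle$, which formally costs one derivative of $\rho$ beyond $H^{s}$; this loss is compensated by the horizontal smoothing furnished by Lemma \ref{smoothing} (which upgrades the source term by two horizontal derivatives) together with the dissipative term $\|\nabla_h u\|_{H^{s+1}}^2$ on the left-hand side, so that the coupled energy still closes by Gronwall on a time interval whose length is bounded below in terms of the initial norms. Uniqueness then follows from an $L^2\times L^2$ estimate on the difference of two solutions, once more driven by $\|\nabla u\|_{L^\infty}$.

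Next I would record the continuation criterion: the local solution extends past any finite time $T$ provided $\int_0^T\|\nabla u(\tau)\|_{L^\infty}\,\mathrm{d}\tau<\infty$, which is immediate from the differential inequality above and Gronwall. Globalization is thereby reduced to verifying this integrability. For data satisfying the hypotheses of Theorem \ref{global}, Proposition \ref{prop-pri}, in the form of the bound \eqref{high-es-2},
\begin{equation*}
\|\nabla u\|_{L^1_tL^\infty}\leq C_4\,e^{e^{e^{\exp{C_4t^{17}}}}},
\end{equation*}
shows that $\int_0^T\|\nabla u(\tau)\|_{L^\infty}\,\mathrm{d}\tau$ is finite on every finite interval, so the maximal existence time cannot be finite and the solution is global.

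The main obstacle is precisely this globalization: because the system \eqref{bs} carries no vertical dissipation, the Lipschitz control $\int_0^t\|\nabla u\|_{L^\infty}\,\mathrm{d}\tau<\infty$ cannot be extracted from parabolic smoothing alone and is exactly the hard output of the space-time logarithmic inequality (Lemma \ref{log}) combined with the anisotropic analysis of Section \ref{Sec-3}; granted that bound, the remaining construction is routine. Finally, the stated time-continuity $u\in C(\RR^+;H^{s+1})$ and $\rho\in C(\RR^+;H^{s})$ is obtained in the usual manner: the uniform bounds give boundedness and weak continuity of the $H^{s+1}$ and $H^{s}$ norms, and strong continuity is recovered from continuity of these norms via a Bona--Smith type regularization of the data together with the equations.
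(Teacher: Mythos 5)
Your proposal is correct in substance, and its local-in-time part coincides with the paper's: the paper also runs a Friedrichs (spectral cut-off) scheme, closes the energy estimate at the level of $(\rho,\omega)\in H^s$ via frequency-localized commutator estimates (Lemma \ref{commutator-est}), extracts the lifespan lower bound \eqref{lifespan}, passes to the limit by Aubin--Lions compactness, and proves uniqueness by an $L^2$ energy argument for Lipschitz fields. Where you genuinely diverge is the globalization. The paper does not invoke the Lipschitz bound \eqref{high-es-2}; instead it propagates the $H^s$ estimate \eqref{local-2}, applies the space-time logarithmic inequality of Lemma \ref{log} to get \eqref{local-3}, which bounds $\int_0^t\|\nabla u(\tau)\|_{L^\infty}\mathrm{d}\tau$ by $\big(\sup_{p}\int_0^t\|\nabla u(\tau)\|_{L^p}p^{-1/2}\mathrm{d}\tau\big)^2+\log\big(e+\|\omega\|_{L^1_tH^s}\big)$, and then closes a log-Gronwall loop using Proposition \ref{prop-fine} to obtain \eqref{local-1}. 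Your route --- continuation criterion plus the already-established bound \eqref{high-es-2} of Proposition \ref{prop-pri} --- is more modular and avoids redoing the logarithmic argument; it is not circular, since Proposition \ref{prop-pri} is an a priori estimate proved independently of Proposition \ref{res-h}, and an $H^{s+1}\times H^s$ solution with $s>\frac32$ is regular enough for its proof to apply. What the paper's longer route buys is merely that it leans only on the intermediate Proposition \ref{prop-fine} rather than on the full strength of Proposition \ref{prop-pri}. Note also that both arguments (yours explicitly, the paper's implicitly through the phrase ``under the assumption of the theorem'') really prove the proposition only for data satisfying the hypotheses of Theorem \ref{global} --- axisymmetry and the support conditions on $\rho_0$ --- a restriction inherited from the paper rather than introduced by you.

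Two technical points in your energy estimate need repair, though both are fixable. First, the buoyancy pairing $\langle\Lambda^{s+1}(\rho e_3),\Lambda^{s+1}u\rangle$ is not handled by Lemma \ref{smoothing}, which is an $L^1_t$ maximal-regularity estimate in anisotropic Besov spaces and has no role in an instantaneous energy inequality; the correct mechanism is the divergence-free condition: writing $\Lambda^{2}=-\Delta_h-\partial_z^2$ and $\partial_z^2u_3=-\partial_z(\partial_1u_1+\partial_2u_2)$, one obtains $\big|\langle\Lambda^{s+1}\rho,\Lambda^{s+1}u_3\rangle\big|\le C\|\rho\|_{H^s}\|\nabla_h u\|_{H^{s+1}}$, which the horizontal dissipation then absorbs --- exactly the manipulation the paper performs in its uniqueness step. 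Second, the ``Moser estimates behind Lemma \ref{lem2.2}'' must be taken in the refined commutator form \eqref{eq-last} of Lemma \ref{commutator-est}: the naive tame estimate for $u\cdot\nabla\rho$ produces the term $\|\nabla\rho\|_{L^\infty}\|u\|_{H^s}$, which is not controlled when $\rho$ lies only in $H^s$ with $\frac32<s\le\frac52$; the refined estimate trades it for $\|\rho\|_{L^\infty}\|\nabla u\|_{H^s}$, which is admissible since $\|\rho\|_{L^\infty}\le\|\rho_0\|_{L^\infty}$ by the maximum principle and $\nabla u\in H^s$ is part of the energy. With these two corrections your differential inequality, and hence the whole scheme, closes as claimed.
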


Here we adopt the classical Friedrichs method (see for example \cite{bcd}) to prove the existence part of \mbox{Proposition \ref{res-h}}. For $n\geq 1$, the spectral cut-off $J_n$ be defined by
\begin{equation*}
\widehat{J_{n}f}(\xi)=\chi_{[0,n]}(|\xi|)\widehat{f}(\xi), \quad\text{for each}\quad \xi \in\RR^3.
\end{equation*}
Thus, by the same argument as in \cite{MZ2012}, one can consider the following system in the spaces $L^{2}_{n}:=\{f\in L^2(\RR^{3})|\,\text{ Supp} f\subset B(0,n)\}$:
\begin{equation*}\label{approx}
  \left\{\begin{array}{ll}
\partial_tu_{n}+\mathcal{P}J_{n}\text{div}(u_{n}\otimes u_{n})-\Delta_{h}u_{n}=\mathcal{P}J_n(\rho_{n} e_3),\quad (t,x)\in\RR^+\times\RR^3,\\
\partial_t\rho_{n} +J_n\text{div}(u_{n} \rho_{n})=0,\\
\text{div}\,u_{n}=0,\\
(\rho_{n},u_{n})|_{t=0}=J_{n}(\rho_0,u_0).
\end{array}\right.
\end{equation*}
Note that the operators $J_n$ and $\mathcal{P}J_n$ are the orthogonal projectors for the $L^2$-inner product. Combining this  with the stability result of \cite[Lemma 5.1]{A-H-K0} ensures that the above formal calculations remain unchanged.

Next, our first target is to show the local well-posedness for the system \eqref{bs}. Applying the operator $\Delta_{q}$ to the density equation, we thus get
\begin{equation*}
\partial_t\Delta_q\rho+S_{q+1}u\cdot \nabla\Delta_q \rho=S_{q+1}u\cdot \nabla\Delta_q \rho-\Delta_{q}(u\cdot\nabla\rho):=F_{q}(u,\rho).
\end{equation*}

Taking the $L^{2}$-inner product to the above equality with $\Delta_q\rho$ and using the incompressible condition, we thus obtain
\begin{equation}\label{h-density}
\frac12\frac{\mathrm{d}}{\mathrm{d}t}\norm{\Delta_{q}\rho(t)}^{2}_{L^{2}}\leq\norm{F_{q}(u,\rho)}_{L^{2}}\norm{\Delta_{q}\rho}_{L^{2}}.
\end{equation}
By using Lemma \ref{commutator-est}, multiplying both sides by $2^{2qs}$ and summing up over $q\geq-1$, we have
\begin{equation}\label{density-h}
\frac{\mathrm{d}}{\mathrm{d}t}\norm{\rho(t)}^{2}_{H^{s}}\leq C\Big(\norm{\nabla u(t)}_{L^{\infty}}\norm{\rho(t)}^{2}_{H^{s}}+\norm{\rho(t)}_{\infty}\norm{\omega(t)}_{H^{s}}\norm{\rho(t)}_{H^{s}}\Big).
\end{equation}
Next, we turn to show the estimate of $\omega$. Applying $\Delta_{q}$ to the velocity equations, we get
\begin{equation*}
\partial_t\Delta_{q}u+S_{q+1}u\cdot\nabla \Delta_{q}u-\Delta_{h}\Delta_{q}u+\Delta_q\nabla\Pi=S_{q+1}u\cdot \nabla\Delta_q u-\Delta_{q}(u\cdot\nabla u):=F_{q}(u,u).
\end{equation*}
Taking the $L^{2}$-inner product to this equation with $\Delta_{q}u$ and using the divergence free condition, we can conclude that
\begin{equation*}
\begin{split}
&\frac12\frac{\mathrm{d}}{\mathrm{d}t}\norm{\Delta_{q}u(t)}^{2}_{L^{2}}+\norm{\nabla_{h}\Delta_{q}u(t)}^{2}_{L^{2}}
\leq\norm{F_{q}(u,u)}_{L^{2}}\norm{\Delta_{q}u}_{L^{2}}.
\end{split}
\end{equation*}
By using Lemma \ref{commutator-est} again, multiplying both sides by $2^{2q(1+s)}$ and summing up over $q\geq-1$, we have
\begin{equation}\label{density-h}
\frac{\mathrm{d}}{\mathrm{d}t}\norm{u(t)}^{2}_{H^{1+s}}\leq C\norm{\nabla u(t)}_{L^{\infty}}\norm{u(t)}^{2}_{H^{1+s}}.
\end{equation}
In a similar way as above, we can conclude that the approximate solution $(\rho_n, u_n)$ to \eqref{approx} satisfies
\begin{equation*}
\norm{(\rho_{n},\omega_{n})(t))}^{2}_{H^{s}}+\int^{t}_{0}\norm{\nabla_h\omega_{n}(\tau)}^{2}_{H^{s}}\mathrm{d}\tau\leq \norm{J_{n}(\rho_{0},\omega_{0}))}^{2}_{H^s}e^{t}e^{C\int_{0}^{t}\norm{(\rho_{n},\nabla u_{n})(\tau)}_{\infty}\mathrm{d}\tau}.
\end{equation*}
Since $s>\frac32$, the space $H^s(\mathbb{R}^3)$ continuously embeds in $L^{\infty}(\mathbb{R}^3)$. Thus, the well-known fact that $\norm{\nabla u_n}_{H^s}$ is equivalent to $\norm{\omega_n}_{H^s}$ entails  us to conclude that
$$X_{n}(t)\leq \norm{(\rho_0,\omega_0)}^{2}_{H^s}e^{\frac{t}{2}}e^{C\int_{0}^{t}X_{n}(\tau)\mathrm{d}\tau}\text{ and }X^{2}_{n}(t):=\norm{(\rho_n,\omega_n)(t)}^{2}_{H^s}.
$$
This inequality may be easily integrated into
\begin{equation*}
\exp \Big(-C\int_0^tX_n(\tau)\mathrm{d}\tau\Big)\geq 1-2CX_{0}e^{\frac{t}{2}},\quad \text{for all } t\geq 0.
\end{equation*}
The energy estimate yields the $L^2$-bound of $u_n$. Therefore, there exists a constant $c>0$ such that if we set
\begin{equation}\label{lifespan}
T:=2\log\Big(\frac{c}{\norm{(\rho_0,\omega_0)}_{H^s}}\Big).
\end{equation}
Therefore, we get
\begin{equation}\label{uniform}
\rho_n\in L^{\infty}([0,T[;H^s),\quad u_n\in L^{\infty}([0,T[;H^{s+1})\quad\text{and}\quad \nabla_hu_n\in L^{2}([0,T[;H^{s+1}).
\end{equation}
We now turn to proof of the local existence of a solution. By virtue of \mbox{Equations \eqref{approx}} and uniform estimetes \eqref{uniform}, it is easy to check that
$\partial_{t}\rho_n\in L^{\infty}([0,T[;H^{s-1})$ and $\partial_{t}u_n\in L^{2}([0,T[;H^{s})$. on the other hand, we know that $H^{s}\hookrightarrow H^{s-1}$
and $H^{s+1}\hookrightarrow H^{s}$ are locally compact. Therefore, by the classical Aubin-Lions argument and Cantor's diagonal process, we can conclude that
 there exists a solution $(\rho,u)$ in $L^{\infty}([0,T[;H^s\times H^{s+1})$ such that $\nabla_h u\in L^{2}([0,T[;H^{s+1})$.
 The time continuity follows from the fact that $\rho$ and $\omega$ satisfy transport equations with $H^{s}$ initial data and a $L^{2}([0,T[;H^{s})$ source
 term. In addition, the standard energy method allows us to obtain the uniqueness of solution for Lipschitz vector field.

Now, it remains for us to show that the local smooth solutions may be extended to all positive time.
Put together the lower bound for the lifespan of $(\rho,u)$ give by \eqref{lifespan} and the uniqueness of smooth solutions, it suffices to state that under the assumption of the theorem, we have
\begin{equation}\label{local-1}
\sup_{0\leq t\leq T}(\norm{\rho(t)}_{H^{s}}+\norm{\omega(t)}_{H^{s}})<\infty.
\end{equation}
First, as $\rho$ is transported by the vector-fields $u$ (which is Lipschitz for $s>\frac32$ implies $H^{s}\hookrightarrow L^{\infty}$),
we get
\begin{equation*}
\norm{\rho(t)}_{L^{\infty}}\leq\norm{\rho_0}_{L^{\infty}}\quad \text{for all } t\in [0,T[.
\end{equation*}
In consequence, the energy estimate \eqref{high-es} ensures that
\begin{equation}\label{local-2}
\norm{(\rho,\omega)(t))}_{H^{s}}+\int^{t}_{0}\norm{\nabla_h\omega(\tau)}^{2}_{H^{s}}\mathrm{d}\tau\leq \norm{(\rho_{0},\omega_{0}))}^{2}_{H^s}e^{t}e^{C\int_{0}^{t}\norm{\nabla u(\tau)}_{\infty}\mathrm{d}\tau}.
\end{equation}
On the other hand, by virtue of \mbox{Proposition \ref{energy}}, \mbox{Proposition \ref{prop-cruc}} and \mbox{Lemma \ref{log}}, we can deduce that
\begin{align}\label{local-3}
\int^{t}_{0}\norm{\nabla u(\tau)}_{L^\infty}\mathrm{d}\tau\leq &C+\sup_{2\leq q<\infty}\int^{t}_{0}\frac{\norm{S_{q}\nabla u(\tau)}_{L^{\infty}}}{\sqrt{q}}\mathrm{d}\tau\Big(\log\big(e+\norm{\omega}_{L^{1}_{T}(H^{s})}\big)\Big)^{\frac12}\nonumber\\
\leq&C+\Big(\sup_{2\leq p<\infty}\int^{t}_{0}\frac{\norm{\nabla u(\tau)}_{L^{p}}}{\sqrt{p}}\mathrm{d}\tau\Big)^{2}+\log\big(e+\norm{\omega}_{L^{1}_{T}(H^{s})}\big)\Big.
\end{align}
Inserting \eqref{local-3} in \eqref{local-2} gives
\begin{align*}
&\norm{(\rho,\omega)(t)}_{H^{s}}+\int^{t}_{0}\norm{\nabla_h\omega(\tau)}^{2}_{H^{s}}\mathrm{d}\tau\nonumber \\
\leq&\norm{(\rho_{0},\omega_{0}))}^{2}_{H^s}e^{Ct}e^{C\big(\sup_{2\leq p<\infty}\int^{t}_{0}\frac{\norm{\nabla u(\tau)}_{L^{p}}}{\sqrt{p}}\mathrm{d}\tau\big)^2}
\Big(e+\int_0^t\norm{\omega(\tau)}_{H^{s}}\mathrm{d}\tau\Big),
\end{align*}
which together with the Gronwall inequality and \mbox{Proposition \ref{prop-fine}} gives the desired  result \eqref{local-1}. This completes the proof of \mbox{Proposition \ref{res-h}}.

Now, let us turn to prove \mbox{Theorem \ref{global}}. We first construct the following approximate scheme:
\begin{equation}
      \label{full-bs-appp}
\left\{ \begin{array}{ll}
(\partial_{t}+u\cdot\nabla) u^n-\Delta_h u^n+\nabla \Pi^n=\rho^n e_3,\quad (t,x)\in \RR^+\times\RR^3,\\
(\partial_{t}+u\cdot\nabla)\rho^n=0,\\
\textnormal{div}\,  u^n=0,\\
  (u^n,\rho^n)|_{t=0}=(S_{n+1}u_{0},S_{n+1}\rho_{0}).
\end{array} \right.
\end{equation}
Since $u_0^n, \,\rho_0^n\in H^\infty:=\bigcap_{s>0} H^s$, we know that \eqref{full-bs-appp} has a unique global solution $(\rho^n,u^n)$ by taking advantage of \mbox{Proposition \ref{res-h}}. Thus, \mbox{Proposition \ref{energy}}, \mbox{Proposition \ref{prop-cruc}} and \mbox{Proposition \ref{prop-pri}} ensure
\begin{align*}
&\rho^n\in L^\infty_{\rm loc}(\RR^+;H^1\cap L^\infty),\quad u^n\in L^\infty_{\rm loc}(\RR^+;H^1),\quad \nabla_hu^n\in L^2_{\rm loc}(\RR^+;H^1),\\
&\partial_z\omega^n\in  L^\infty_{\rm loc}(\RR^+;L^2),\quad \nabla_h\partial_z\omega^n\in  L^2_{\rm loc}(\RR^+;L^2)\quad \text{and}\quad\nabla u\in L^1_{\rm loc}(\RR^+;L^\infty).
\end{align*}
Mimicking the compactness argument used for proving \mbox{Proposition \ref{res-h}}, one can
conclude that there exists a solution $(\rho,u)$ such that (deduced from the Fatou lemma)
\begin{align*}
&\rho\in L^\infty_{\rm loc}(\RR^+;H^1\cap L^\infty),\quad u\in L^\infty_{\rm loc}(\RR^+;H^1),\quad \nabla_hu\in L^2_{\rm loc}(\RR^+;H^1),\\
&\partial_z\omega\in  L^\infty_{\rm loc}(\RR^+;L^2),\quad \nabla_h\partial_z\omega\in  L^2_{\rm loc}(\RR^+;L^2)\quad \text{and}\quad\nabla u\in L^1_{\rm loc}(\RR^+;L^\infty).
\end{align*}
From the above estimates, we eventually obtain the time continuity by the same argument as used in \cite[Proposition F.4]{MZ2012}.

It remains for us to show the uniqueness statement. Let $(\rho,u,\Pi)$ and $(\widetilde\rho,\widetilde u,\widetilde\Pi)$ be  two solutions of the
 system \eqref{bs} with the same initial \mbox{data $(u_0,\rho_0)$} such that
 \begin{align}\label{eq-ddi}
&\rho,\tilde\rho\in L^\infty_{\rm loc}(\RR^+;H^1\cap L^\infty),\quad u,\,\tilde u\in L^\infty_{\rm loc}(\RR^+;H^1),\quad \nabla_hu,\,\nabla_h\tilde u\in L^2_{\rm loc}(\RR^+;H^1),\nonumber\\
&\partial_z\omega,\,\partial_z\tilde\omega\in  L^\infty_{\rm loc}(\RR^+;L^2),\quad \nabla_h\partial_z\omega,\,\nabla_h\partial_z\tilde\omega\in  L^2_{\rm loc}(\RR^+;L^2)\quad \text{and}\quad\nabla u,\,\nabla\tilde u\in L^1_{\rm loc}(\RR^+;L^\infty).
\end{align}
   Then the difference $(\delta\rho,\delta u,\delta \Pi)$ between two solutions $(\rho,u,\Pi)$ and $(\widetilde\rho,\widetilde u,\widetilde\Pi)$ satisfies
\begin{equation*}
\begin{cases}
\partial_t\delta u+u\cdot\nabla\delta u-\Delta_{h}\delta u+\nabla\delta \Pi=\delta\rho e_{3}+\delta u\cdot\nabla \widetilde u,\\
\partial_t\delta\rho+u\cdot\nabla\delta\rho=-\delta u\cdot\nabla\widetilde \rho.
\end{cases}
\end{equation*}
The standard energy argument together with the fact from the Plancherel theorem that
\begin{align*}
\int_{\RR^3}\delta\rho e_3\delta u\intd x=\int_{\RR^3}\delta\rho \delta u_z\intd x=&\int_{\RR^3}(\Delta)^{-1}\delta\rho \Delta\delta u_z\intd x\\
=&\int_{\RR^3}(\Delta)^{-1}\delta\rho \Delta_h\delta u_z\intd x-\sum_{i=1}^2\int_{\RR^3}(\Delta)^{-1}\delta\rho \partial_z \partial_i\delta u_i\intd x\\
\leq&C\|\delta\rho\|_{H^{-1}}\|\nabla_h\delta u\|_{L^2}
\end{align*}
 enables us to infer that
\begin{align*}
\frac12\dtd \|\delta u(t)\|_{L^2}^2+\|\nabla_h\delta u(t)\|_{L^2}^2\leq C\|\delta\rho\|_{H^{-1}}\|\nabla_h\delta u\|_{L^2}+\|\nabla\tilde u\|_{L^\infty}\|\delta u\|^{2}_{L^2}
\end{align*}
which implies
\begin{align}\label{eq-d1}
\dtd \|\delta u(t)\|_{L^2}\leq C\|\delta\rho\|_{H^{-1}}+\|\nabla\tilde u\|_{L^\infty}\|\delta u\|_{L^2}.
\end{align}
And the Fourier localization technique and the classical commutator estimate (see for example \cite{CWZ2012}) allow us to conclude that
\begin{equation}\label{eq-d2}
\dtd \|\delta\rho(t)\|_{H^{-1}}\leq C\|\nabla u\|_{L^\infty}\|\delta \rho\|_{H^{-1}}+\|\delta u\|_{L^2}\|\tilde \rho\|_{L^\infty}.
\end{equation}
Putting \eqref{eq-d1} together with  \eqref{eq-ddi} and \eqref{eq-d2}, and using the Gronwall inequality entails $(\delta\rho,\delta u)\equiv0$. This completes the proof of \mbox{Theorem \ref{global}}.

\appendix
\section{Appendix}
\label{appendix}
\setcounter{section}{5}\setcounter{equation}{0}
In this section, we shall give two useful lemmas which have been used throughout the paper.
\begin{lemma}[\cite{bcd}]\label{lema.2}
There exists the positive constant $C$ such that
\begin{equation}\label{a.5}
\begin{split}
\int_{\mathbb{R}^{3}}fgh\mathrm{d}x_{1}\mathrm{d}x_{2}\mathrm{d}x_{3}
\leq C\norm{f}^{\frac{1}{2}}_{L^{2}}\norm{\partial_{x_{3}}f}^{\frac{1}{2}}_{L^{2}}
\norm{g}^{\frac{1}{2}}_{L^{2}}\norm{\nabla_{h}g}^{\frac{1}{2}}_{L^{2}}\norm{h}^{\frac{1}{2}}_{L^{2}}\norm{\nabla_{h}h}^{\frac{1}{2}}_{L^{2}}.
\end{split}
\end{equation}
\end{lemma}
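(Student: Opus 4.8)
The plan is to exploit the anisotropy by placing $f$ in $L^\infty$ in the vertical variable $x_3$ and distributing the two horizontal derivatives onto $g$ and $h$ through a two-dimensional Ladyzhenskaya-type inequality. Write $x=(x_h,x_3)$ with $x_h=(x_1,x_2)$, and view the triple integral as the iterated integral $\int_{\RR^2}\big(\int_\RR fgh\,\intd x_3\big)\intd x_h$.

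First I would treat the vertical slices. For fixed $x_h$, the one-dimensional Agmon inequality $\norm{\phi}_{L^\infty(\RR)}^2\le 2\norm{\phi}_{L^2(\RR)}\norm{\phi'}_{L^2(\RR)}$ gives $\norm{f(x_h,\cdot)}_{L^\infty_{x_3}}^2\le 2\norm{f(x_h,\cdot)}_{L^2_{x_3}}\norm{\partial_{x_3}f(x_h,\cdot)}_{L^2_{x_3}}$. Pulling $f$ out in $L^\infty_{x_3}$ and applying Cauchy--Schwarz in $x_3$ to the remaining product $gh$ reduces matters to
$$\int_{\RR^3}fgh\,\intd x\le\int_{\RR^2}F(x_h)G(x_h)H(x_h)\,\intd x_h,$$
where $F(x_h):=\norm{f(x_h,\cdot)}_{L^\infty_{x_3}}$, $G(x_h):=\norm{g(x_h,\cdot)}_{L^2_{x_3}}$ and $H(x_h):=\norm{h(x_h,\cdot)}_{L^2_{x_3}}$.

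Next I would bound the planar integral by H\"older with exponents $(2,4,4)$, namely $\int_{\RR^2}FGH\,\intd x_h\le\norm{F}_{L^2_{x_h}}\norm{G}_{L^4_{x_h}}\norm{H}_{L^4_{x_h}}$. The factor $\norm{F}_{L^2_{x_h}}$ is controlled directly: squaring the Agmon bound and applying Cauchy--Schwarz in $x_h$ yields $\norm{F}_{L^2_{x_h}}\le\sqrt2\,\norm{f}_{L^2(\RR^3)}^{1/2}\norm{\partial_{x_3}f}_{L^2(\RR^3)}^{1/2}$. For the two $L^4_{x_h}$ factors I would invoke the planar Ladyzhenskaya inequality $\norm{G}_{L^4(\RR^2)}^2\le C\norm{G}_{L^2(\RR^2)}\norm{\nabla_h G}_{L^2(\RR^2)}$ (and likewise for $H$), and then translate the right-hand side back into three-dimensional norms of $g$. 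By Fubini one has $\norm{G}_{L^2_{x_h}}=\norm{g}_{L^2(\RR^3)}$, while for the gradient I would use the pointwise chain-rule estimate $|\nabla_h G(x_h)|=\big|\nabla_h\big(\int_\RR g^2\,\intd x_3\big)^{1/2}\big|\le\norm{\nabla_h g(x_h,\cdot)}_{L^2_{x_3}}$, obtained by differentiating under the integral sign and Cauchy--Schwarz; integrating in $x_h$ gives $\norm{\nabla_h G}_{L^2_{x_h}}\le\norm{\nabla_h g}_{L^2(\RR^3)}$. Hence $\norm{G}_{L^4_{x_h}}\le C\norm{g}_{L^2}^{1/2}\norm{\nabla_h g}_{L^2}^{1/2}$ and similarly for $H$.

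Multiplying the three factors then reproduces exactly the claimed bound \eqref{a.5}. The only delicate point is the last step, where one must justify that $\nabla_h$ of the fiberwise $L^2_{x_3}$ norm is dominated pointwise in $x_h$ by the $L^2_{x_3}$ norm of $\nabla_h g$; this is where differentiation under the integral is used, together with a density argument to deal with the vanishing of the denominator where $G(x_h)=0$. Everything else is a direct chain of the one-dimensional Agmon inequality, Cauchy--Schwarz, H\"older, and the planar Ladyzhenskaya inequality, so I do not anticipate any genuine obstruction.
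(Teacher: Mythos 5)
Your proof is correct, but there is nothing in the paper to compare it against: the lemma is stated in the appendix and attributed to \cite{bcd} without any in-paper proof. Your argument --- the slicewise one-dimensional Agmon inequality in $x_3$ to produce the factor $\norm{f}_{L^2}^{1/2}\norm{\partial_{x_3}f}_{L^2}^{1/2}$, H\"older with exponents $(2,4,4)$ in $x_h$, and the planar Ladyzhenskaya inequality applied to the fiberwise norms $G$ and $H$ --- is the standard proof of this anisotropic trilinear estimate, essentially the one found in the cited reference, so there is no divergence of method to report. The one genuinely delicate point, which you correctly flag, is giving meaning to $\nabla_h G$ on the set where $G$ vanishes. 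A clean implementation is: first reduce to smooth compactly supported $g$ by density; then apply Ladyzhenskaya not to $G$ but to $G_\EE:=\bigl(\int_\RR g^2\,\intd x_3+\EE\bigr)^{1/2}-\sqrt\EE$, which is smooth, satisfies $0\le G_\EE\le G$ and, by the same Cauchy--Schwarz computation, $|\nabla_h G_\EE|\le\norm{\nabla_h g(x_h,\cdot)}_{L^2_{x_3}}$ pointwise; finally let $\EE\to0$, noting that $G_\EE$ increases monotonically to $G$, so the $L^4$ bound passes to the limit by monotone convergence. With that detail filled in, every step of your chain is valid and the constants combine exactly as claimed.
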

\begin{lemma}\label{commutator-est}
Let $s>-1$ and $1\leq p\leq\infty$. Assume that $u$ be a divergence free vector fields over $\mathbb{R}^d$. There exists a positive constant $C$ such that for all $q\geq-1$
\begin{equation}\label{eq-l}
2^{qs}\big\|R_q(u,u)\big\|_{L^2}\leq c_q\|\nabla u\|_{L^\infty}\|u\|_{B^s_{p,r}}\quad\text{with}\quad c_q\in l^2,
\end{equation}
and
\begin{equation}\label{eq-last}
\big\|R_q(u,\rho)\big\|_{L^2}\leq C\Big(\|\nabla u\|_{L^\infty}\sum_{q'\geq q-4}2^{q-q'}\|\Delta_{q'}\rho\|_{L^2}+\|\rho\|_{L^\infty}\sum_{|q-q'|\leq4}\|\Delta_{q'}\nabla  u\|_{L^2}\Big),
\end{equation}
where $R_q(u,v):= S_{q+1}u\cdot\nabla\Delta_qv-\Delta_q(u\cdot\nabla v)$.
\end{lemma}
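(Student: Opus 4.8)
The plan is to establish both estimates by the same mechanism: Bony's paraproduct decomposition combined with the elementary first-order commutator bound for the operator $\Delta_q$. Write $\Delta_q f(x)=2^{nq}\int_{\RR^n}h\bigl(2^q(x-y)\bigr)f(y)\,\intd y$ with $h=\mathcal{F}^{-1}\varphi$, so that for any Lipschitz $a$ one has the integral identity $[\Delta_q,a]b(x)=2^{nq}\int h\bigl(2^q(x-y)\bigr)\bigl(a(y)-a(x)\bigr)b(y)\,\intd y$; estimating $|a(y)-a(x)|\le\|\nabla a\|_{L^\infty}|x-y|$ and using $\int 2^{nq}|h(2^q(x-y))|\,|x-y|\,\intd y=2^{-q}\int|z|\,|h(z)|\,\intd z<\infty$ yields the key bound
\begin{equation*}
\bigl\|[\Delta_q,a]b\bigr\|_{L^2}\le C2^{-q}\|\nabla a\|_{L^\infty}\|b\|_{L^2}.
\end{equation*}
The gain of $2^{-q}$ is exactly what compensates the derivative $\nabla$ carried inside $R_q$.

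For \eqref{eq-l}, since $\textnormal{div}\,u=0$ I first rewrite $u\cdot\nabla v=\textnormal{div}(u\otimes v)$ and $S_{q+1}u\cdot\nabla\Delta_q v=\partial_j\bigl(S_{q+1}u_j\,\Delta_q v\bigr)$, the corrective term vanishing because $\sum_j\partial_j S_{q+1}u_j=S_{q+1}(\textnormal{div}\,u)=0$. Applying Bony's decomposition $u_jv=T_{u_j}v+T_v u_j+\mathcal{R}(u_j,v)$ splits $R_q(u,u)$ into three groups. The paraproduct group, after the spectral localization $\Delta_q T_{u_j}v=\sum_{|q'-q|\le4}\Delta_q\bigl(S_{q'-1}u_j\Delta_{q'}v\bigr)$, reduces (up to spectrally localized lower-order terms treated identically) to a finite sum of genuine commutators $[\Delta_q,S_{q'-1}u_j]\,\partial_j\Delta_{q'}v$ to which the displayed bound applies; since $2^{-q}2^{q'}\simeq1$ for $|q'-q|\le4$, this produces $C\|\nabla u\|_{L^\infty}\sum_{|q'-q|\le4}\|\Delta_{q'}v\|_{L^2}$, and multiplying by $2^{qs}$, using $2^{qs}\simeq2^{q's}$ and $\ell^2$-summability of $\bigl(2^{q's}\|\Delta_{q'}v\|_{L^2}\bigr)_{q'}$, gives the factor $c_q\in\ell^2$ and the norm $\|u\|_{B^s_{p,r}}$. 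The remaining two groups, coming from $T_v u_j$ and $\mathcal{R}(u_j,v)$, are estimated directly via Bernstein's inequality, with $\partial_j\Delta_q$ contributing $2^q$, the low-frequency factor controlled by $\|\nabla u\|_{L^\infty}$ and the high-frequency factor by the $B^s_{p,r}$ norm; the convergence of the remainder sum is precisely where the hypothesis $s>-1$ is needed, since it requires $s+1>0$.

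For \eqref{eq-last} the crucial difference is that $\rho$ is only controlled in $L^\infty$ (through the maximum principle), so no derivative may fall on it. Using $\textnormal{div}\,u=0$ to write $u\cdot\nabla\rho=\partial_j(\rho u_j)$ and applying $\rho u_j=T_\rho u_j+T_{u_j}\rho+\mathcal{R}(\rho,u_j)$, I arrange the splitting so that the commutator arising from $T_{u_j}\rho$ keeps $\nabla u$ in $L^\infty$ and $\Delta_{q'}\rho$ in $L^2$, yielding after the $2^{-q}$/$2^q$ bookkeeping the tail contribution $\|\nabla u\|_{L^\infty}\sum_{q'\ge q-4}2^{q-q'}\|\Delta_{q'}\rho\|_{L^2}$; the remainder $\mathcal{R}(\rho,u_j)$, whose spectrum forces $q'\ge q-4$, feeds the same sum. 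The complementary group $T_\rho u_j$, in which $\partial_j\Delta_q$ lands on a localized piece of $u$ while $\rho$ stays in $L^\infty$, produces the local contribution $\|\rho\|_{L^\infty}\sum_{|q-q'|\le4}\|\Delta_{q'}\nabla u\|_{L^2}$. Collecting the two yields \eqref{eq-last}.

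The Bernstein estimates for the paraproduct and remainder pieces are routine. The delicate point, and the main obstacle, is the bookkeeping that keeps every derivative off $\rho$ in \eqref{eq-last} while retaining summable structure (the explicit $2^{q-q'}$ weights, resp. $\ell^2$ in \eqref{eq-l}); this is exactly where $\textnormal{div}\,u=0$, used to transfer the derivative onto $u$ via $u\cdot\nabla=\textnormal{div}(u\otimes\,\cdot\,)$, is indispensable, and for \eqref{eq-l} the analogous subtlety is the convergence of the remainder sum, which dictates the restriction $s>-1$.
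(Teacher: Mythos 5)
Your toolkit is the right one and largely matches the paper's: the mean-value integral representation of $[\Delta_q,a]$ giving the $2^{-q}\|\nabla a\|_{L^\infty}$ gain, Bony's decomposition, divergence-freeness to move the derivative off $\rho$, and Bernstein bookkeeping. Your treatment of the paraproduct pieces (pairing $S_{q+1}u_j\Delta_q\rho$ against $\Delta_qT_{u_j}\rho$ to form genuine commutators) is sound. The paper, however, organizes the decomposition in a different order: it first writes $R_q(u,\rho)=-[\Delta_q,S_{q+1}\bar u]\cdot\nabla\rho-[\Delta_q,S_1u]\cdot\nabla\rho-\Delta_q\big(({\rm I_d}-S_{q+1})u\cdot\nabla\rho\big)$ with $\bar u=({\rm I_d}-S_1)u$, and only then applies Bony \emph{inside} the first commutator. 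This difference of order is not cosmetic, and it is exactly where your sketch has a genuine gap.

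The gap is in your remainder estimate. In your splitting, $\mathcal{R}(u_j,\rho)=\sum_{q'}\Delta_{q'}u_j\widetilde\Delta_{q'}\rho$ is estimated on its own, and you assert it ``feeds the same sum'' $\|\nabla u\|_{L^\infty}\sum_{q'\ge q-4}2^{q-q'}\|\Delta_{q'}\rho\|_{L^2}$. That step needs the Bernstein-type bound $\|\Delta_{q'}u\|_{L^\infty}\le C2^{-q'}\|\nabla u\|_{L^\infty}$, which holds only for $q'\ge0$: the lowest block $\Delta_{-1}u$ is supported in a ball, not an annulus, so neither $\|\Delta_{-1}u\|_{L^\infty}$ is controlled by $\|\nabla u\|_{L^\infty}$ nor $\|\Delta_{-1}u\|_{L^2}$ by $\|\Delta_{-1}\nabla u\|_{L^2}$ (rescale $u(x/R)$, $R\to\infty$, to see both fail). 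Hence the contribution $\partial_j\Delta_q\big(\Delta_{-1}u_j\widetilde\Delta_{-1}\rho\big)$, which survives for the finitely many small $q$, is bounded by none of the quantities on the right of \eqref{eq-last}; the same defect sits in your remainder estimate for \eqref{eq-l}. The underlying point is that low frequencies of $u$ can be converted into $\nabla u$ only through commutator structure (the difference $a(x)-a(y)$), never through Bernstein; your decomposition pairs $S_{q+1}u_j\Delta_q\rho$ only against the paraproduct $T_{u_j}\rho$, thereby stranding the low-frequency-in-$u$ part of the remainder outside any commutator. This is precisely why the paper introduces the additional splitting $S_{q+1}u=S_{q+1}\bar u+S_1u$: the whole of $S_1u$ (in particular $\Delta_{-1}u$) stays inside the genuine commutator $[\Delta_q,S_1u]\cdot\nabla\rho$, which the mean-value bound handles using only $\|\nabla S_1u\|_{L^\infty}\le\|\nabla u\|_{L^\infty}$, while Bony is applied only to $\bar u$ and $({\rm I_d}-S_{q+1})u$, whose blocks all lie at frequencies where Bernstein applies. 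Your argument can be repaired by adopting this grouping (or any grouping that keeps every $\Delta_{-1}u$ interaction inside a commutator), but as written the remainder step fails.
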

\begin{proof}
We just give the proof of \eqref{eq-last},  since the proof of \eqref{eq-l} is standard.
First of all, one  decomposes $R_q(u,\rho)$ as follows:
\begin{align*}
R_q(u,\rho)=&u\cdot\nabla\Delta_q\rho-\Delta_q(u\cdot\nabla \rho)-\Delta_q\big(({\rm I_d}-S_{q+1})u\cdot\nabla \rho\big)\nonumber\\
=&-[\Delta_q,S_{q+1}\bar u]\cdot\nabla \rho-[\Delta_q,S_1u]\cdot\nabla \rho-\Delta_q\big(({\rm I_d}-S_{q+1})u\cdot\nabla\rho\big),
\end{align*}
where $\bar u=({\rm I_d}-S_1)u$.

Note that
\begin{align*}
[\Delta_q,S_{q+1}\bar u]\cdot\nabla\rho=&[\Delta_q,S_{q+1}T_{\bar u_i}]\partial_i\rho+\Delta_q\big(T_{\partial_i\rho}S_{q+1}\bar u_i\big)+
\Delta_q\big(R(S_{q+1}\bar u_i,\partial_i \rho)\big)\nonumber\\
&-T_{\Delta_q\partial_i\rho}S_{q+1}\bar u_i-R(S_{q+1}\bar u_i,\Delta_q\partial_i\rho)\nonumber\\
:=&R^{1}_q(u,\rho)+R^{2}_q(u,\rho)+R^{3}_q(u,\rho)+R^{4}_q(u,\rho)+R^{5}_q(u,\rho),
\end{align*}
and
\begin{align*}
\Delta_q\big(({\rm I_d}-S_{q+1})u\cdot\nabla \rho\big)=&\Delta_q\big(T_{({\rm I_d}-S_{q+1})u_i}\partial_i\rho\big)+\Delta_q\big(T_{\partial_i\rho}({\rm I_d}-S_{q+1})u_i\big)+
\Delta_qR\big(({\rm I_d}-S_{q+1})u_i,\partial_i\rho\big)\nonumber\\
:=&R^{6}_q(u,\rho)+R^{7}_q(u,\rho)+R^{8}_q(u,\rho).
\end{align*}
From above, it is clear to find that the only term $[\Delta_q,S_1u]\cdot\nabla \rho$ involves low frequencies of $u$.

First of all, we observe that
\begin{align*}
&[S_{q'-1}S_{q+1}\bar u_i,\Delta_q]\partial_i\Delta_{q'}\rho\nonumber\\
=&2^{qd}\int_{\mathbb{R}^d}\big(S_{q'-1}S_{q+1}\bar u_i(x)-S_{q'-1}S_{q+1}\bar u_i(x-y)\big)\varphi\big(2^{q}(x-y)\big)\partial_i\Delta_{q'}\rho(y)\intd y\nonumber\\
=&-2^{qd}\int_{\mathbb{R}^d}\int_0^1\partial_kS_{q'-1}S_{q+1}\bar u_i\big(x+(1-\tau)(x-y)\big)\intd\tau(x_k-y_k)\varphi\big(2^{q}(x-y)\big)\partial_i\Delta_{q'}\rho(y)\intd y\nonumber\\
=&-2^{q(d-1)}\int_{\mathbb{R}^d}\int_0^1\partial_kS_{q'-1}S_{q+1}\bar u_i\big(x+(1-\tau)(x-y)\big)\intd\tau2^{q}(x_k-y_k)\varphi\big(2^{q}(x-y)\big)\partial_i\Delta_{q'}\rho(y)\intd y,
\end{align*}
where used the relation $\Delta_{q}f=2^{qd}\int_{\mathbb{R}^d}\varphi\big(2^{q}(x-y)\big)f(y)\intd y$.

Therefore, we immediately get that
\begin{align}
\|R^{1}_q(u,\rho)\|_{L^p}\leq&C \sum_{|q'-q|\leq4}2^{-(q-q')}\|\partial_k S_{q'-1}\bar u_i\|_{L^\infty}
\|\partial_i\Delta_{q'}\rho\|_{L^p}\int_{\mathbb{R}^d}|x\varphi(x)|\intd x\nonumber\\
\leq&C \sum_{|q'-q|\leq4}2^{-(q-q')}\|\partial_k S_{q'-1}u_i\|_{L^\infty}\|\Delta_{q'}\rho\|_{L^p}\nonumber\\
\leq&C \|\nabla u\|_{L^\infty}\sum_{|q'-q|\leq4}2^{-(q-q')}\|\Delta_{q'}\rho\|_{L^p}.
\end{align}
In a similar fashion as for proving $R^1_q(u,\rho)$, we can bounded $[\Delta_q,S_1u]\cdot\nabla \rho$ as follows:
\begin{align}
\|[\Delta_q,S_1u]\cdot\nabla \rho\|_{L^p}\leq& C \sum_{|q'-q|\leq4}2^{-(q-q')}\|\partial_k S_{q'-1}u_i\|_{L^\infty}\|\Delta_{q'}\rho\|_{L^p}\nonumber\\
\leq&C \|\nabla u\|_{L^\infty}\sum_{|q'-q|\leq4}2^{-(q-q')}\|\Delta_{q'}\rho\|_{L^p}.
\end{align}
For the second term $R_q^2(u,\rho)$, the H\"older inequality allows us to conclude that
\begin{align}
\|R_q^2(u,\rho)\|_{L^p}\leq&C\sum_{|q'-q|\leq4}\|\Delta_{q'}\bar u_i\|_{L^p}\big\|S_{q'-1}\partial_i\rho\big\|_{L^\infty}\nonumber\\
\leq&C\big\|\rho\big\|_{L^\infty}\sum_{|q'-q|\leq4}\|\Delta_{q'}(\nabla u)\|_{L^p}
\end{align}
Similarly, we can conclude that
\begin{equation}
\|R_q^4(u,\rho)\|_{L^p}\leq C \|\nabla u\|_{L^\infty}\sum_{-1\leq q'\leq q+2}2^{q'-q}\big\|\Delta_{q'}\rho\big\|_{L^p}.
\end{equation}
The reminder term $R_q^3(u,\rho)$ can be bounded by
\begin{align}
\big\|\partial_i\Delta_q\big(R(S_{q+1}\bar u_i, \rho)\big)\big\|_{L^p}
\leq& C\sum_{q'\geq q-2}2^{q}\|\Delta_{q'}S_{q-1}\rho\|_{L^p}\|\tilde\Delta_{q'} \bar u_i\|_{L^\infty}\nonumber\\
\leq& C\sum_{q'\geq q-2}2^{q-q'}\|\Delta_{q'}S_{q-1}\rho\|_{L^p}\|\tilde\Delta_{q'} \nabla u\|_{L^\infty}\nonumber\\
\leq&C\|\nabla u\|_{L^\infty}\sum_{q'\geq q-2}2^{q-q'}\|\Delta_{q'}\rho\|_{L^p}.
\end{align}
In a similar way, one has
\begin{equation}
\|R_q^5(u,\rho)\|_{L^p}
\leq C\|\nabla u\|_{L^\infty}\sum_{q'\geq q-2}2^{q-q'}\|\Delta_{q'}\rho\|_{L^p}.
\end{equation}
It remain for us to bound the last three terms $R_q^6(u,\rho),\,R_q^7(u,\rho)$ and $R_q^8(u,\rho)$. Thanks to the property of support and the H\"older inequality, one has
\begin{align}
\|R_q^6(u,\rho)\|_{L^p}
\leq &C\sum_{|q'-q|\leq4}\|S_{q'-1}({\rm I_d}-S_{q+1})u_i\|_{L^\infty}\|\Delta_{q'}\partial_i\rho\|_{L^p}\nonumber\\
\leq&C\sum_{|q'-q|\leq4}2^{-q}\|S_{q'-1}({\rm I_d}-S_{q+1})\nabla u_i\|_{L^\infty}\|\Delta_{q'}\partial_i\rho\|_{L^p}\nonumber\\
\leq&C\sum_{|q'-q|\leq4}2^{q'-q}\|S_{q'-1}\nabla u_i\|_{L^\infty}\|\Delta_{q'}\rho\|_{L^p}\nonumber\\
\leq&C \|\nabla u\|_{L^\infty}\sum_{|q'-q|\leq4}2^{q'-q}\|\Delta_{q'}\rho\|_{L^p}.
\end{align}
For the term $R_q^7(u,\rho)$, by the H\"older inequality, we obtain
\begin{align}
\|R_q^7(u,\rho)\|_{L^p}
\leq &C\sum_{|q'-q|\leq4}\|S_{q'-1}\partial_i\rho\|_{L^\infty}\|\Delta_{q'}({\rm I_d}-S_{q+1})u_i\|_{L^p}\nonumber\\
\leq&C\sum_{|q'-q|\leq4}2^{q'-q}\|S_{q'-1}\rho\|_{L^\infty}\|\Delta_{q'}({\rm I_d}-S_{q+1})\nabla u_i\|_{L^p}\nonumber\\
\leq&C\|\rho\|_{L^\infty}\sum_{|q'-q|\leq4}2^{q'-q}\|\Delta_{q'}({\rm I_d}-S_{q+1})\nabla u\|_{L^p}.
\end{align}
As for the last term $R_q^8(u,\rho)$, by the H\"older inequality, we obtain
\begin{align}
\|R_q^8(u,\rho)\|_{L^p}
\leq &C\big\|\partial_i\Delta_qR\big(({\rm I_d}-S_{q+1})u_i,\rho\big)\big\|_{L^p}\nonumber\\
\leq&C\sum_{q'\geq q-2}2^{q}\|\Delta_{q'}\rho\|_{L^p}\|\tilde\Delta_{q'}({\rm I_d}-S_{q+1})u_i\|_{L^\infty}\nonumber\\
\leq&C\sum_{q'\geq q-2}2^{q}\|\Delta_{q'}\rho\|_{L^p}\|\tilde{\dot{\Delta}}_{q'} u_i\|_{L^\infty}\nonumber\\
\leq&C\sum_{q'\geq q-2}2^{q-q'}\|\Delta_{q'}\rho\|_{L^p}\|\tilde{\dot{\Delta}}_{q'}\nabla u_i\|_{L^\infty}\nonumber\\
\leq&C\|\nabla u\|_{L^\infty}\sum_{q'\geq q-2}2^{q-q'}\|\Delta_{q'}\rho\|_{L^p}.
\end{align}
Combining these estimates yields the desired result \eqref{eq-last}.
\end{proof}

\vskip .2in
\section*{Acknowledgements}
The authors are thankful to Prof. M. Paicu for informing us the references \cite{DP2008,DP2008-2}.
The authors were supported by NSF of China under grants No.11171033 and No.11231006.

 \end{document}